\documentclass[11pt]{article}

 \usepackage{amsmath,amssymb,amscd,amsthm,esint}

\usepackage{graphics,amsmath,amssymb,amsthm,mathrsfs}

 \oddsidemargin 3mm
 \evensidemargin 3mm
 \headheight=18pt
 \setlength{\topmargin}{-0.3cm}
\textheight 22cm
 \textwidth 16cm

\bibliographystyle{amsplain}

\usepackage{amsfonts}
\usepackage{esint}
\usepackage{pifont}
\usepackage{bbding}
\topmargin=0cm \oddsidemargin=0cm \textwidth=16cm \textheight=23cm
\usepackage{latexsym}
\usepackage{mathrsfs}
\usepackage{amsmath}
\usepackage{amsthm}
\usepackage{amssymb}
\usepackage{graphicx}
\usepackage{lineno}
\usepackage{amssymb}

\usepackage[left=1in, right=1in, bottom=1.4in]{geometry}

\newtheorem{theorem}{Theorem}[section]
\newtheorem{corollary}{Corollary}[section]

\newtheorem{lemma}{Lemma}[section]
\newtheorem{remark}{Remark}[section]
\numberwithin{equation}{section}

\date{} 
\begin{document}

\title{Convergence Rates and Interior Estimates \\ in Homogenization of  Higher Order Elliptic Systems }

\author{Weisheng Niu\thanks{Supported in part by the NSF of China (11301003)
and Anhui Province (1708085MA02).},
 Zhongwei Shen\thanks{Supported in part by NSF grant DMS-1600520.}, and Yao Xu}

\maketitle
\pagestyle{plain}
\begin{abstract}
This paper is concerned with the quantitative homogenization of $2m$-order elliptic systems with bounded measurable, rapidly oscillating periodic coefficients.  We establish the sharp $O(\varepsilon)$  convergence rate in $W^{m-1, p_0}$ with
$p_0=\frac{2d}{d-1}$ in a bounded Lipschitz domain in $\mathbb{R}^d$
as well as the uniform large-scale interior $C^{m-1, 1}$ estimate. With additional smoothness assumptions, the uniform interior $C^{m-1, 1}$, $W^{m,p}$ and $C^{m-1, \alpha}$ estimates are also obtained. As applications of the regularity
estimates, we establish asymptotic expansions for fundamental solutions.

\end{abstract}



\section{Introduction}

 Let $\Omega$ be a bounded Lipschitz domain in $\mathbb{R}^d$. Consider the Dirichlet problem
 for a family of $2m$-order elliptic systems 
\begin{equation} \label{eq1}
 \begin{cases}
 \mathcal{L}_\varepsilon u_\varepsilon =f  &\text{ in } \Omega,  \vspace{0.3cm}\\
 Tr (D^\gamma u_\varepsilon)=g_\gamma  & \text{ on } \partial\Omega \quad \text{ for  } 0\leq|\gamma|\leq m-1,
\end{cases}
\end{equation}
where
$$ (\mathcal{L}_\varepsilon u_\varepsilon )_i  = (-1)^{m}\sum_{|\alpha|=|\beta|=m} D^\alpha \Big(A_{ij}^{\alpha \beta}\Big(\frac{x}{\varepsilon}\Big)D^\beta u_{\varepsilon j}\Big), ~~   1\leq i, j\leq  n,
$$
$ u_{\varepsilon j}$ denotes the $j$-th component of the $\mathbb{R}^n$-valued function $u_\varepsilon$, $\alpha, \beta,\gamma$
are multi-indices with nonnegative integer components $\alpha_k, \beta_k,\gamma_k, k=1,2,...,d$, and
$$ |\alpha|=\sum_{k=1}^d \alpha_k, ~~D^\alpha=D_{x_1}^{\alpha_1} D_{x_2}^{\alpha_2}\cdot\cdot\cdot D_{x_d}^{\alpha_d}.
$$
We assume that the coefficients matrix $A(y)=(A_{ij}^{\alpha \beta}(y))$ is real, bounded measurable with
\begin{align}\label{cod1}
\|A_{ij}^{\alpha \beta}(y)\|_{L^\infty(\mathbb{R}^d)}\leq \frac{1}{\mu},
\end{align}
and satisfies the coercivity condition
\begin{align}\label{cod2}
\sum_{|\alpha|=|\beta|=m} \int_{\mathbb{R}^d}
 D^\alpha \phi_i  A_{ij}^{\alpha \beta} D^\beta \phi_j \geq \mu \sum_{|\alpha|=m}\| D^\alpha \phi \|^2_{L^2(\mathbb{R}^d)}
\quad
   \text{ for any } \phi \in C_c^\infty(\mathbb{R}^d; \mathbb{R}^n ),
\end{align}
where  $\mu>0$.
We also assume that
\begin{align}\label{cod3}
A_{ij}^{\alpha \beta}(y+z)=A_{ij}^{\alpha \beta}(y) \ \  \text{ for any } y\in \mathbb{R}^d \text{ and } z\in \mathbb{Z}^d.
\end{align}
Functions satisfying condition (\ref{cod3}) will be called $1$-periodic. By a linear translation,  $ \mathbb{Z}^d$ in (\ref{cod3}) may be replaced by any lattice in $ \mathbb{R}^d.$

Let $W\!A^{m,p}(\partial\Omega, \mathbb{R}^n)$ denote the Whitney-Sobolev space  of
$\dot{g}=  \{g_\gamma\}_{|\gamma|\leq m-1}$, which is
the completion of the set of arrays of $\mathbb{R}^n$-valued functions
$$\left\{\{ D^\alpha \mathcal{G}\mid_{\partial\Omega}\}_{|\alpha|\leq m-1}:  \mathcal{G}\in C_c^\infty(\mathbb{R}^d; \mathbb{R}^n ) \right\},
$$
with respect to the norm
$$ \| \dot{g} \|_{W\!A^{m,p}(\partial\Omega)} =\sum_{|\alpha|\leq m-1} \|g_\alpha\|_{L^p(\partial \Omega)} + \sum_{|\alpha|=m-1} \|\nabla_{tan}  g_\alpha\|_{L^p(\partial \Omega)}.
$$
 Denote $W^{m,p}(\Omega; \mathbb{R}^n)$ the conventional Sobolev spaces of $\mathbb{R}^n$-valued  functions, and let $ W_0^{m,p}(\Omega; \mathbb{R}^n)$ be the completion of $C_c^\infty(\Omega;\mathbb{R}^n )$ in $W^{m,p}(\Omega; \mathbb{R}^n )$, with dual $W^{-m,p'}(\Omega; \mathbb{R}^n )$. Also following the conventions, we denote these spaces as $H^m(\Omega; \mathbb{R}^n ), H_0^m(\Omega; \mathbb{R}^n )$ and $H^{-m}(\Omega; \mathbb{R}^n )$
respectively when $p=2$.

It is well known that  under the ellipticity condition
(\ref{cod1})-(\ref{cod2}),
for any $\dot{g}\in W\!A^{m,2}(\partial\Omega, \mathbb{R}^n)$ and $ f\in H^{-m}(\Omega;\mathbb{R}^n)$,
the Dirichlet problem (\ref{eq1}) admits a unique weak solution $u_\varepsilon$ in $ H^m(\Omega;\mathbb{R}^n)$ such that
$$
 \int_\Omega \sum_{|\alpha|=|\beta|=m} D^\alpha v_i A_{ij}^{\alpha \beta}(x/\varepsilon)D^\beta u_j \, dx= \langle f, v\rangle
  ~~\text{ for any } v\in H_0^m(\Omega;\mathbb{R}^n ).
$$
Moreover,
 $$
 \|u_\varepsilon\|_{H^m(\Omega)} \leq C \left\{\|f\|_{H^{-m}(\Omega)} +\|\dot{g}\|_{W\!A^{m,2}(\partial\Omega)}\right\},
 $$
 where $C$ depends only on $d$, $m$, $n$, $\mu$ and $\Omega$.
Under the additional periodicity condition (\ref{cod3}),
the operator $\mathcal{L}_\varepsilon$ is G-convergent to  $\mathcal{L}_0 $, where
 \begin{align*}
(\mathcal{L}_0 u )_i = \sum_{|\alpha|=|\beta|=m} (-1)^m D^\alpha (\bar{A}_{ij}^{\alpha \beta}D^\beta u_j)
\end{align*}
is an elliptic operator of order $2m$ with constant coefficients,
 $$\bar{A}_{ij}^{\alpha \beta}=\sum_{|\gamma|=m}\frac{1}{|Q|}\int_Q \Big[A_{ij}^{\alpha \beta}(y)
 + A_{i\ell}^{\alpha \gamma}(y)D^\gamma \chi_{\ell j}^\beta (y)\Big]dy
 $$
 (see e.g.  \cite{zk}).
Here $Q=[-1/2, 1/2]^d$ and
$\chi= (\chi^\gamma_{ij})$ is the matrix of correctors for the operator $\mathcal{L}_\varepsilon$
(see Section 2 for definition). Furthermore,
  the matrix $(\bar{A}_{ij}^{\alpha \beta})$ is bounded and
satisfies the coercivity condition (\ref{cod2}). Thus the homogenized problem for (\ref{eq1}), given by
\begin{equation} \label{hoeq1}
 \begin{cases}
 \mathcal{L}_0  u_0 =f &\text{ in } \Omega,  \vspace{0.3cm}\\
 Tr (D^\gamma u_0)=g_\gamma & \text{ on } \partial\Omega \quad \text{ for }  0\leq|\gamma|\leq m-1,
\end{cases}
\end{equation}
admits a unique weak solution $u_0\in H^m(\Omega; \mathbb{R}^n )$,  satisfying
$$ \|u_0\|_{H^m(\Omega)} \leq C \left\{\|f\|_{H^{-m}(\Omega)} +\|\dot{g}\|_{W\!A^{m,2}(\partial\Omega)}\right\}.$$

Our first result gives the optimal convergence rate of $u_\varepsilon$ to $u_0$ in $W^{m-1,  2d/ (d-1)}(\Omega).$

\begin{theorem}\label{tcon}
Let $\Omega$ be a bounded Lipschitz domain in $\mathbb{R}^d$, $d\ge 2$.
Assume that the matrix $A=(A_{ij}^{\alpha\beta})$ satisfies (\ref{cod1})--(\ref{cod3}),
and is symmetric, i.e. $A=A^*$. Let $u_\varepsilon, u_0$ be the weak solutions
to the Dirichlet problems (\ref{eq1}) and  (\ref{hoeq1}), respectively.
Assume that $u_0\in H^{m+1}(\Omega;\mathbb{R}^n)$. Then
\begin{align}\label{convre1}
\|u_\varepsilon-u_0\|_{W_0^{m-1, q_0 } (\Omega)} \leq C \varepsilon \|u_0\|_{H^{m+1}(\Omega)},
\end{align}
where $q_0=\frac{2d}{d-1}$ and  $C$ depends only on $d, n, m, \mu$ and $\Omega.$
\end{theorem}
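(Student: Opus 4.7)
I would combine a duality argument with the standard smoothed two-scale expansion, adapting the second-order approach to the $2m$-order setting. Since $u_\va - u_0 \in H_0^m(\Om)$, the $W_0^{m-1,q_0}(\Om)$ norm is dual to a $W^{-m+1,q_0'}$ norm with $q_0' = 2d/(d+1)$. For an arbitrary $\dot h = (h_\ga)_{|\ga|\le m-1}$ of unit norm in $L^{q_0'}(\Om;\R^n)$, I would let $z \in H_0^m(\Om;\R^n)$ solve the dual homogenized Dirichlet problem
\[
\mc L_0 z = \sum_{|\ga|\le m-1}(-1)^{|\ga|} D^\ga h_\ga \quad\text{in } \Om,
\]
with vanishing traces of $D^\mu z$ for $|\mu|\le m-1$ on $\pa\Om$, using $\mc L_0^* = \mc L_0$ from $A = A^*$. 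The key regularity input is that $q_0' = 2d/(d+1)$ lies in the $W^{m,p}$-Dirichlet range of the constant-coefficient self-adjoint $2m$-order system $\mc L_0$ on a bounded Lipschitz domain (Pipher--Verchota-type estimates in the higher-order case), so that $\|z\|_{W^{m,q_0'}(\Om)} \le C$. Pairing and using $\mc L_\va u_\va = f = \mc L_0 u_0$ reduces the problem to bounding
\[
I_\va := \int_\Om \sum_{|\alpha|=|\beta|=m}\bigl[\bar A_{ij}^{\alpha\beta} - A_{ij}^{\alpha\beta}(x/\va)\bigr]\, D^\beta u_{\va,j}\, D^\alpha z_i\, dx \;\le\; C\va\,\|u_0\|_{H^{m+1}(\Om)}.
\]

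\textbf{Two-scale correction and flux correctors.} To expose the cancellation, I would replace $u_\va$ under the integral by the smoothed two-scale approximation
\[
u_\va \;\approx\; u_0 + \va^m \sum_{|\beta|=m} \chi^\beta(x/\va)\, S_\va\!\bigl(\eta_\va\, D^\beta u_0\bigr),
\]
where $S_\va$ is a Steklov-type mollifier at scale $\va$ and $\eta_\va$ is a cutoff supported in $\Om_{2\va} := \{x \in \Om : \operatorname{dist}(x,\pa\Om) > 2\va\}$, equal to one on $\Om_{4\va}$, with $|D^k\eta_\va| \le C\va^{-k}$. Associated periodic flux correctors $\mf B^{\tau\beta\ga}_{ij}$, antisymmetric in a suitable pair of $m$-multi-indices and satisfying
\[
D^\tau \mf B^{\tau\beta\ga}_{ij}(y) = A_{ij}^{\beta\ga}(y) + A_{i\ell}^{\beta\mu}(y)\, D^\mu \chi^\ga_{\ell j}(y) - \bar A_{ij}^{\beta\ga}
\]
on the unit cell, allow $I_\va$ to be rewritten, after substitution and $m$ integrations by parts, as a sum of bulk integrals carrying an explicit $\va$ factor (from the chain rule on $\mf B(\cdot/\va)$) plus boundary-layer remainders supported in $\Om \setminus \Om_{4\va}$. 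The bulk terms are controlled by Cauchy--Schwarz using the $H^{m+1}$-bound on $u_0$, the $W^{m,q_0'}$-bound on $z$, and the periodicity bounds on $\chi$ and $\mf B$.

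\textbf{Main obstacle: boundary layer.} The delicate step is the strip $\Om\setminus\Om_{4\va}$. Derivatives landing on $\eta_\va$ produce an amplitude $O(\va^{-m})$ that exactly consumes the $\va^m$ prefactor, so the missing $O(\va)$ must be recovered by localized Hardy/Meyers-type estimates
\[
\int_{\Om\setminus\Om_{c\va}} |D^\alpha u|^2 \, dx \;\le\; C\,\va\,\|u\|_{H^{m+1}(\Om)}^2, \qquad |\alpha|\le m,
\]
applied to $u_0$, together with a matching fractional/trace-type bound for $z$ coming from the $W^{m,q_0'}$-estimate and the Lipschitz boundary geometry. Carefully distributing the $m$ derivatives among $\mf B(\cdot/\va)$, $\eta_\va$ and $D^\ga u_0$ and invoking the appropriate inequality at each redistribution furnishes the $\va^{1/2}\cdot\va^{1/2}$ gain. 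The residual difference between $u_\va - u_0$ and the approximation, namely $\va^m \chi^\beta(x/\va)\, S_\va(\eta_\va D^\beta u_0)$, is directly $O(\va)$ in $W^{m-1,q_0}(\Om)$ by Sobolev embedding and uniform $L^\infty_{\mathrm{loc}}$-bounds on $\chi$. The subtlest ingredient is verifying that $q_0' = 2d/(d+1)$ indeed lies inside the Lipschitz-domain $W^{m,p}$-regularity range of the dual constant-coefficient system; it is here that the symmetry $A = A^*$, which passes to $\bar A = \bar A^*$ and enables higher-order Pipher--Verchota estimates, is used essentially.
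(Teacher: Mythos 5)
Your duality setup, the flux-corrector ansatz, and the boundary-layer analysis are all in the spirit of the paper's proof, and the final remark that the corrector term $\va^m\chi^\gamma(x/\va)S_\va(\eta_\va D^\gamma u_0)$ is itself $O(\va)$ in $W^{m-1,q_0}$ matches the paper's estimate (\ref{ptc001}). But there is a genuine gap at the heart of the argument: you only correct the primal solution. When you ``replace $u_\va$ under the integral by the smoothed two-scale approximation,'' the substitution error is exactly $w_\va=u_\va-u_0-\va^m\sum\chi^\gamma(x/\va)S_\va(\eta_\va D^\gamma u_0)$, and its contribution to $I_\va$ is
\begin{equation*}
\int_\Om \sum_{|\alpha|=|\beta|=m}\bigl[\bar A_{ij}^{\alpha\beta}-A_{ij}^{\alpha\beta}(x/\va)\bigr]\,D^\beta w_{\va j}\,D^\alpha z_i\,dx .
\end{equation*}
This term is nowhere estimated in your plan. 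The only available bound on $w_\va$ is the energy estimate $\|w_\va\|_{H^m_0(\Om)}\le C\va^{1/2}$ (Theorem \ref{t3.1}), which is sharp because of the boundary layer, and $\nabla^m z$ is merely in $L^2$; Cauchy--Schwarz therefore yields only $O(\va^{1/2})$, not the claimed $O(\va)$. The bulk terms and the $\Om\setminus\Om_{4\va}$ remainders you describe all come from the substituted approximation, not from this substitution error, so the ``$\va^{1/2}\cdot\va^{1/2}$ gain'' you invoke never materializes for this piece.

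The missing idea is that the dual solution must be corrected as well. The paper takes the test data $F\in W^{-m+1,p_0}(\Om)$, solves \emph{both} $\mc L_\va\psi_\va=F$ and $\mc L_0\psi_0=F$ with zero Dirichlet data, forms $\Psi_\va=\psi_\va-\psi_0-\va^m\sum\chi^\gamma(x/\va)S^2_\va(D^\gamma\tilde\psi_0)\rho_\va$, and writes $\langle w_\va,F\rangle=\int D^\alpha w_\va A^{\alpha\beta}(x/\va)D^\beta\psi_\va=K_1+K_2+K_3$. The hard term $K_1=\int D^\alpha w_\va A^{\alpha\beta}(x/\va)D^\beta\Psi_\va$ is then a product of two quantities each of size $O(\va^{1/2})$ in $H^m_0$ (Theorem \ref{t3.1} applied to the primal and the dual problem), which is exactly how the rate $O(\va)$ is achieved; $K_2$ and $K_3$ are handled by the identity (\ref{l3.1re2}) together with the boundary-layer bound $\|\nabla^m\psi_0\|_{L^2(\Om_\va)}\le C\va^{1/2}\|F\|_{W^{-m+1,p_0}}$. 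Your Pipher--Verchota input and the symmetry $A=A^*$ are indeed used, but not to get a $W^{m,q_0'}$ solvability theory for $z$; they enter through the $L^2$ nontangential maximal function estimate for $\nabla^m$ of the homogeneous part of $u_0$ (and of $\psi_0$), which produces precisely the $O(\va^{1/2})$ boundary-layer bounds. To repair your argument you would have to introduce the $\va$-dependent dual solution and its corrected approximation (or, equivalently, expand $[\bar A-A(x/\va)]D^\alpha z$ using the correctors and flux correctors of the adjoint operator), at which point you recover the paper's proof.
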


As a consequence of (\ref{convre1}), we obtain
  \begin{equation}\label{Higher-q-1-rate}
\|u_\varepsilon-u_0\|_{L^{q_1} (\Omega)} \leq C \varepsilon \|u_0\|_{H^{m+1}(\Omega)},
\end{equation}
  where $q_1=\frac{2d}{d-2m+1}$ if $d>2m-1$, $q_1\in (2, \infty)$ if $d=2m-1$, and
  $q=\infty$ if $d<2m-1$.

The problem of convergence rates, which is of great interest  in the quantitative homogenization theory,
 has been studied extensively for second-order elliptic equations. In particular,
 the optimal convergence rate in $L^2$,
 \begin{equation}\label{L-2-rate}
  \|u_\varepsilon-u_0\|_{L^{2} (\Omega)} \leq C \varepsilon \|u_0\|_{H^{2}(\Omega)},
  \end{equation}
 was established for second-order elliptic systems in divergence form.
 We refer readers to \cite{su,su1} and their references
 for general elliptic systems in $C^{1,1}$ domains and to
 \cite{klsa,klsj,sh1} for results in Lipshcitz domains.
 Also see related results in \cite{ps, gs1, klsc, sz}.
Moreover, the estimate
\begin{equation}\label{L-q-rate}
\|u_\varepsilon-u_0\|_{L^{q_0} (\Omega)}\leq C \varepsilon \|u_0\|_{H^{2}(\Omega)}
\end{equation}
 with $ q_0=2d/(d-1)$
 was proved for second-order elliptic systems with either Dirichlet or Neumann boundary conditions
 in Lipschitz domains in \cite{sh1}.

 Until very recently,
 few quantitative results were known for higher-order elliptic equations,
 although  qualitative convergence results (such as the G-convergence)
 have been obtained for many years \cite{jko,zk}.
 In \cite{pa,pa1,ks}  interesting results were established on the optimal convergence rates
 for higher-order elliptic equations in the whole space.
 In \cite{Suslina2017-D, Suslina2017-N}
 some two-parameter resolvent estimates were obtained
for  a general higher-order elliptic systems with periodic coefficients in a bounded $C^{2m}$ domain
$\Omega$ with homogeneous Dirichlet or Neumann  data.
In particular, it is proved that
 \begin{equation}\label{higher-L-2-rate}
 \| u_\varepsilon -u_0\|_{L^2(\Omega)} \le C \varepsilon \| f\|_{L^2(\Omega)}.
 \end{equation}
  Our Theorem \ref{tcon},
  which extends the estimate (\ref{L-q-rate}) for second-order elliptic systems,
  recovers the estimate (\ref{higher-L-2-rate}) if $\Omega$ is sufficiently smooth.

  Our next two results establish the uniform interior $C^{m-1, 1}$ and
  $W^{m, p}$ regularity of $u_\varepsilon.$

\begin{theorem}\label{tlip}
Assume that $ A(y)$ satisfies  (\ref{cod1})--(\ref{cod3}).
Let $u_\varepsilon\in H^m(B_R; \mathbb{R}^n)$ be a
weak solution to $\mathcal{L}_\varepsilon u_\varepsilon=\sum_{|\alpha|\le m-1} D^\alpha f^\alpha $ in a ball $B_R=B(x_0,R)$
with $f^\alpha \in L^q(B_R; \mathbb{R}^n)$ for some $ q>d$.
Then for $0<\varepsilon\leq r\le R/2<\infty,$ it holds that
\begin{align}\label{larlip}
\left(\fint_{B_r} |\nabla^m u_\varepsilon|^2\right)^{1/2}\leq C \left\{ \frac{1}{R^m} \left(\fint_{B_R}
|  u_\varepsilon|^2\right)^{1/2} + \sum_{|\alpha|\le m-1}
R^{m-|\alpha|}\left(\fint_{B_R} |f^\alpha|^q\right)^{1/q}\right\},
\end{align}
where $C$ depends only on $d,n,m, \mu$ and $q.$ If in addition,  $A$
is H\"{o}lder continuous, i.e.,
\begin{align}\label{hol}
|A(x)-A(y)|\leq \Lambda_0 |x-y|^{\tau_0}
\quad  \text{ for any }   x, y\in \mathbb{R}^d,
\end{align}
where   $  \Lambda_0>0$ and $  \tau_0\in(0,1)$, then
\begin{align}\label{lip}
  |\nabla^m u_\varepsilon(x_0)| \leq C \left\{ \frac{1}{R^m} \left(\fint_{B_R}
   |  u_\varepsilon|^2\right)^{1/2} + \sum_{|\alpha|\le m-1}
   R^{m-|\alpha|}
   \left(\fint_{B_R} |f^\alpha |^q\right)^{1/q}\right\},
\end{align}
for any $\varepsilon>0$,
where $C$ depends only on $d,n,m, \mu, q, \Lambda_0$ and $\tau_0.$
\end{theorem}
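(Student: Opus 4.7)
My plan is to handle the two estimates separately. The large-scale Lipschitz bound (\ref{larlip}) will be proved by a compactness-free iteration driven by a quantitative convergence rate from $\mc{L}_\varepsilon$ to $\mc{L}_0$, and the pointwise bound (\ref{lip}) will follow by applying (\ref{larlip}) at the scale $r=\varepsilon$ and invoking classical interior Schauder theory for the rescaled equation.

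For (\ref{larlip}), the engine is a one-step excess-decay inequality. Let $\mc{P}$ denote the $\mb{R}^n$-valued polynomials of degree at most $m$ that are annihilated by $\mc{L}_0$, and set
\[
H(r):=\inf_{P\in\mc{P}}\frac{1}{r^m}\left(\fint_{B(x_0,r)}|u_\varepsilon-P|^2\right)^{1/2}.
\]
I would show that for some fixed $\theta\in(0,1/4)$ and every $\varepsilon\le r\le R/2$,
\[
H(\theta r)\le \tfrac{1}{2}\, H(r)+C\sum_{|\alpha|\le m-1}r^{m-|\alpha|}\left(\fint_{B(x_0,r)}|f^\alpha|^q\right)^{1/q}.
\]
Iterating this geometrically from scale $R/2$ down to any $r\in[\varepsilon,R/2]$ controls $H(r)$ by the right-hand side of (\ref{larlip}), and a Caccioppoli inequality for $\mc{L}_\varepsilon$ converts the bound on $H(r)$ into the claimed bound on $\fint_{B_r}|\nabla^m u_\varepsilon|^2$.

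The key lemma behind the excess decay is the following approximation: for $r\ge\varepsilon$, construct $w\in H^m(B(x_0,r);\mb{R}^n)$ with $\mc{L}_0 w=\sum_{|\alpha|\le m-1}D^\alpha f^\alpha$ in $B(x_0,r)$ and $D^\gamma w=D^\gamma u_\varepsilon$ on $\partial B(x_0,r)$ for $|\gamma|\le m-1$, and establish
\[
\left(\fint_{B(x_0,r)}|u_\varepsilon-w|^2\right)^{1/2}\le C(\varepsilon/r)^{\sigma}\, r^m\left(\fint_{B(x_0,2r)}|\nabla^m u_\varepsilon|^2\right)^{1/2}+(\text{data terms})
\]
for some $\sigma\in(0,1)$. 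This is a localized, quantitative analogue of the higher-order two-scale expansion $u_\varepsilon\approx w+\varepsilon^m\chi^\beta(x/\varepsilon)D^\beta w$: insert this ansatz with a cutoff supported away from $\partial B(x_0,r)$, use the equation for the correctors $\chi^\beta$ and an associated dual (flux) corrector to rewrite the error as a divergence, and estimate the boundary-layer integral over a shell of width $\varepsilon$ by higher-order Hardy and trace inequalities. Given such a $w$, the constant-coefficient $C^{m-1,1}$ regularity of $\mc{L}_0$ yields a polynomial $P\in\mc{P}$ satisfying $r^{-m}(\fint_{B_{\theta r}}|w-P|^2)^{1/2}\le C\theta\cdot r^{-m}(\fint_{B_r}|w-Q|^2)^{1/2}$ for any $Q\in\mc{P}$; choosing $\theta$ so that $C\theta\le 1/8$ and applying the triangle inequality with the $u_\varepsilon-w$ bound produces the excess decay.

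For the pointwise estimate (\ref{lip}), I would first apply (\ref{larlip}) at scale $r=\varepsilon$ to bound $\fint_{B(x_0,\varepsilon)}|\nabla^m u_\varepsilon|^2$ by the right-hand side of (\ref{lip}). Rescaling via $v(y):=\varepsilon^{-m}u_\varepsilon(x_0+\varepsilon y)$ produces a unit-scale equation on $B(0,1)$ with coefficients $A(x_0+\varepsilon y)$, which by (\ref{hol}) are H\"older continuous in $y$ with the same exponent $\tau_0$ and a seminorm bounded by $\Lambda_0\varepsilon^{\tau_0}\le\Lambda_0$. Classical interior Schauder theory for $2m$-order elliptic systems with H\"older coefficients then gives $|\nabla^m v(0)|\le C\{(\fint_{B_1}|v|^2)^{1/2}+\text{data}\}$, and rescaling back yields (\ref{lip}). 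The \emph{main obstacle} I anticipate is the approximation lemma: one must construct the corrector $\chi^\beta$ and an appropriate dual flux corrector for the $2m$-order system, obtain the uniform bounds needed to control the two-scale error, and handle the Whitney-type boundary data $\{D^\gamma u_\varepsilon|_{\partial B_r}\}_{|\gamma|\le m-1}$ via higher-order trace and Hardy inequalities on a shell of width $\varepsilon$ near $\partial B(x_0,r)$.
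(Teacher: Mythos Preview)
Your overall strategy matches the paper's (Lemmas~\ref{lemm4.1}--\ref{lemm4.4}), and the blow-up argument for (\ref{lip}) is exactly right. However, there is a genuine gap in the iteration for (\ref{larlip}).

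The one-step decay you state,
\[
H(\theta r)\le \tfrac{1}{2}\, H(r)+C\sum_{|\alpha|\le m-1}r^{m-|\alpha|}\Big(\fint_{B_r}|f^\alpha|^q\Big)^{1/q},
\]
is \emph{not} what the approximation lemma actually delivers. As you correctly write a few lines later, the error $\|u_\varepsilon-w\|_{L^2(B_r)}$ carries a factor $(\varepsilon/r)^\sigma r^m(\fint_{B_{2r}}|\nabla^m u_\varepsilon|^2)^{1/2}$. After Caccioppoli and subtracting an arbitrary $P_{m-1}\in\mf{P}_{m-1}$, this becomes $(\varepsilon/r)^\sigma I(2r)$, and $I(2r)\le C\{H(2r)+h(2r)\}$ where $h(r)$ measures the top-degree coefficients of the optimal $P\in\mf{P}_m$ at scale $r$. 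The honest one-step inequality is therefore
\[
H(\theta r)\le \tfrac{1}{2}\, H(r)+C(\varepsilon/r)^\sigma\bigl\{H(2r)+h(2r)\bigr\}+\text{data},
\]
exactly as in Lemma~\ref{lemm4.3}. This cannot be iterated ``geometrically'': naively summing the $(\varepsilon/r)^\sigma h(2r)$ contributions fails because $h$ is the quantity you are ultimately trying to control, and a priori it could grow as $r$ decreases. The paper closes this loop with a separate iteration lemma (Lemma~\ref{lemm4.4}): one first verifies the slow-variation bound $|h(t)-h(s)|\le C_0 H(2r)$ for $t,s\in[r,2r]$, and then uses the Dini condition $\int_0^1 t^{\sigma-1}\,dt<\infty$ to absorb the accumulated $(\varepsilon/r)^\sigma\{H+h\}$ errors into $H(1)+h(1)$. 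You need either to invoke such a lemma or to carry out the coupled iteration for the pair $(H,h)$ explicitly; the clean geometric iteration you wrote down does not go through.
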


\begin{theorem}\label{twmp}
Suppose that $ A(y)$ satisfies  (\ref{cod1})--(\ref{cod3}) and
$A\in V\!M\!O (\mathbb{R}^d),$ i.e.,
\begin{align}\label{vmo}
  \sup_{x\in \mathbb{R}^d,  0<r<t} \fint_{B(x,r)}\Big|A(y)-\fint_{B(x,r)}A\Big|\, dy\leq \varrho(t),
\end{align}
for some nondecreasing continuous function $\varrho(t)$ on $[0,1]$  with $ \varrho(0)= 0$.
For $B=B(x_0,r)$ in $ \mathbb{R}^d$, let $u_\varepsilon\in H^m(2B; \mathbb{R}^n)$ be a weak solution to
$$ \mathcal{L}_\varepsilon u_\varepsilon=\sum_{|\alpha|\le m} D^\alpha f^\alpha  \text{ in } 2B~(=B(x_0, 2r)) $$
with $f^\alpha\in L^p(2B; \mathbb{R}^n)$ for some $2<p<\infty $.  Then
\begin{align}\label{wmp}
\left(\fint_{B}|\nabla^m u_\varepsilon|^p\right)^{1/p}\leq C
 \left\{ \frac{1}{r^m} \left(\fint_{2B}|u_\varepsilon|^2\right)^{1/2}
 +  \sum_{|\alpha|\le m} r^{m-|\alpha|}
 \left(\fint_{2B}|f^\alpha|^p\right)^{1/p}   \right\},
\end{align}
where $C$ depends only on $d, n, m, \mu, p$ and $\varrho(t)$ in (\ref{vmo}).
\end{theorem}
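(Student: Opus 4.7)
My plan is to prove (\ref{wmp}) by a real-variable argument of Shen type, combined with the large-scale interior Caccioppoli estimate (\ref{larlip}) from Theorem~\ref{tlip} at scales above $\varepsilon$ and the classical small-$V\!M\!O$ $W^{m,p}$ theory at scales below $\varepsilon$. After translating and rescaling so that $x_0=0$ and $r=1$ (this preserves periodicity, $L^\infty$-bound, coercivity, and $V\!M\!O$ modulus of the coefficient and merely rescales $\varepsilon$ to $\varepsilon/r$; the estimate must hold uniformly in $\varepsilon$), the real-variable criterion reduces the problem to producing, for each ball $B(y,s)\subset B(0,1)$ with $s$ below a suitable threshold $s_0$, a splitting $\nabla^m u_\varepsilon=G_1+G_2$ on $B(y,2s)$ such that $G_1$ satisfies a reverse-H\"older bound in some exponent $q>p$, controlled by an $L^2$ average of $\nabla^m u_\varepsilon$ and $L^p$ averages of the $f^\alpha$, while $G_2$ is bounded in $L^2$ purely by the $L^p$ averages of the $f^\alpha$.

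\textbf{Decomposition and the easy piece.} I take the natural splitting $u_\varepsilon=v+w$, where $w\in H_0^m(B(y,2s);\mathbb{R}^n)$ solves $\mathcal{L}_\varepsilon w=\sum_{|\alpha|\le m}D^\alpha f^\alpha$ in $B(y,2s)$ (unique by coercivity) and hence $v=u_\varepsilon-w$ satisfies $\mathcal{L}_\varepsilon v=0$ in $B(y,2s)$. For $G_2=\nabla^m w$, testing the equation against $w$ and applying H\"older's inequality gives
\begin{equation*}
\Bigl(\fint_{B(y,2s)}|\nabla^m w|^2\Bigr)^{1/2}\le C\sum_{|\alpha|\le m}s^{m-|\alpha|}\Bigl(\fint_{B(y,2s)}|f^\alpha|^p\Bigr)^{1/p},
\end{equation*}
which is the required bound on the small piece.

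\textbf{Reverse-H\"older for $\nabla^m v$.} This is the heart of the argument. At \emph{large scales} $s\ge\varepsilon$, I apply (\ref{larlip}) to $v-P$ for a polynomial $P$ of degree $<m$ and invoke Poincar\'e's inequality to convert the $L^2$-norm of $v-P$ on the right-hand side into $\rho^m$ times the $L^2$-norm of $\nabla^m v$, obtaining the self-improvement
\begin{equation*}
\Bigl(\fint_{B(y,\rho)}|\nabla^m v|^2\Bigr)^{1/2}\le C\Bigl(\fint_{B(y,2\rho)}|\nabla^m v|^2\Bigr)^{1/2}\qquad\text{for }\varepsilon\le\rho\le s,
\end{equation*}
which a Gehring-type argument lifts to a reverse-H\"older estimate with some exponent $q_1>2$. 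At \emph{small scales} $s<\varepsilon$, the $V\!M\!O$ modulus of $A(\cdot/\varepsilon)$ on $B(y,s)$ is at most $\varrho(s/\varepsilon)$; restricting to $s\le\eta\varepsilon$ with $\eta$ small enough that $\varrho(\eta)$ is controlled, I freeze the coefficient at $\bar A=\fint_{B(y,s)}A(\cdot/\varepsilon)$ and invoke the classical small-$V\!M\!O$ interior $W^{m,p}$ theory for perturbations of the constant-coefficient operator $\mathcal{L}_{\bar A}$, which yields a reverse-H\"older estimate with any finite exponent $q<\infty$. The intermediate range $\eta\varepsilon\le s<\varepsilon$ is handled by covering $B(y,s)$ by $O(\eta^{-d})$ balls of radius $\eta\varepsilon$ and applying the small-scale estimate on each.

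\textbf{Main obstacle.} The principal difficulty is merging the two regimes uniformly in $\varepsilon$ so that a single exponent $q>p$ is achieved with a constant depending only on $d,n,m,\mu,p$ and $\varrho$. The Gehring improvement at large scales typically yields only a modest $q_1>2$, so reaching arbitrary $p$ forces us to lean on the small-scale $V\!M\!O$ theory, whose $W^{m,p}$ constants must be tracked carefully in terms of $\varrho$ and $p$; simultaneously, the overall decomposition must satisfy the precise hypotheses of Shen's real-variable criterion at \emph{every} ball $B(y,s)$. This bookkeeping at the transition scale $s\sim\varepsilon$, where neither regime is sharp, is where the argument is most delicate.
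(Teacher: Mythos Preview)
Your overall architecture (Shen's real-variable criterion applied to the decomposition $u_\varepsilon=v+w$ with $w\in H^m_0$ absorbing the data and $v$ homogeneous) is exactly the paper's approach, and the treatment of the easy piece $G_2=\nabla^m w$ is fine. The gap is in your reverse-H\"older bound for the homogeneous piece at scales $s\ge\varepsilon$. The self-improvement estimate you derive from (\ref{larlip}) is correct, but feeding it into a Gehring-type argument yields only a marginal exponent $q_1>2$, and you correctly identify that this is insufficient for arbitrary $p$. You then say you must ``lean on the small-scale $V\!M\!O$ theory'' and that the merging is ``delicate,'' but you never explain how an exponent $q>p$ is actually obtained on a ball of radius $s\ge\varepsilon$. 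As written, the argument stops short of delivering the hypothesis of Shen's criterion for general $p$.

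The paper resolves this in Lemma~\ref{lemma-5.1} without any Gehring step. The point is that (\ref{larlip}) is far stronger than a reverse-H\"older seed: it controls the $L^2$ average of $\nabla^m v$ on \emph{every} ball $B(z,\varepsilon)$, $z\in B(y,s)$, uniformly by the $L^2$ average on $B(y,2s)$. On each such $\varepsilon$-ball one rescales by $1/\varepsilon$, so that the operator becomes $\mathcal{L}_1$ with coefficients $A(\cdot)$ (whose $V\!M\!O$ modulus is exactly $\varrho$, independent of $\varepsilon$), and the classical higher-order $V\!M\!O$ theory (cited as \cite{dk}) then gives the reverse H\"older with any exponent $q$. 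Covering $B(y,s)$ by $O((s/\varepsilon)^d)$ balls of radius $\varepsilon/2$ and summing yields
\[
\Bigl(\fint_{B(y,s)}|\nabla^m v|^q\Bigr)^{1/q}\le C\Bigl(\fint_{B(y,2s)}|\nabla^m v|^2\Bigr)^{1/2}
\]
for \emph{every} $q<\infty$, with $C$ depending only on $d,n,m,\mu,q,\varrho$. This single estimate replaces your entire two-regime discussion and removes the obstacle you flagged; once you have it, the rest of your argument goes through verbatim.
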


The regularity estimates  that are uniform in $\varepsilon>0$ are a central issue in quantitative homogenization.
For second-order elliptic systems
the study was initiated by M. Avellaneda and F. Lin in a series of celebrated papers
 \cite{al87, alo, al89, al91}. Using a compactness method,
  interior and boundary H\"older estimates, $W^{1,p}$ estimates and Lipschitz
  estimates were obtained for second-order elliptic systems
  with H\"{o}lder continuous coefficients and Dirichlet conditions in bounded $C^{1,\alpha}$ domains.
  The corresponding boundary estimates for solutions with Neumann conditions were obtained
  by C. Kenig, F. Lin and Z. Shen in \cite{klsa1}, using the compactness method.
  More recently, another scheme for uniform regularity estimates, especially in the large scale,
  was formulated  in \cite{ak}  and used for convex functionals with random coefficients.
  This scheme, which is based on convergence rates,
   was  further developed in \cite{as,sh1} for periodic and almost periodic second-order elliptic systems.
We refer the reader to \cite{gs, gs2, gsh, ks1,ks2} for
related results on uniform regularity estimates.

 Our Theorems \ref{tlip} and \ref{twmp} extend the interior uniform estimates for second-order elliptic systems
 to higher-order elliptic systems.
   As far as we know,  no uniform regularity result in the quantitative homogenization of higher-order elliptic equations
is previously known.

Let $ \Gamma^{\varepsilon, A}(x,y)$ denote the matrix of
   fundamental solutions associated to the operator $\mathcal{L}_\varepsilon\, (\varepsilon\geq 0)$.
   As applications of the regularity results above, the asymptotic behavior of $ \Gamma^{\varepsilon, A}(x,y)$
    is  derived.

\begin{theorem}\label{texpan}
  Assume that $A(y)$ satisfies  (\ref{cod1})--(\ref{cod3}) and (\ref{vmo}).
  Suppose that $2\le 2m<d$.
   Then for any multi-index $\zeta$ with $0\leq |\zeta|\leq m-1,$
  \begin{align}\label{expan1}
  |D_y^\zeta\Gamma^{\varepsilon,A}(x,y)-D_y^\zeta\Gamma^{0,A}(x,y)|\leq C \varepsilon |x-y|^{2m-d-|\zeta|-1},
   \end{align}
 for any $ x,y \in \mathbb{R}^d$ and $ x\neq y,$ where $C$ depends only on $d, n, m, \mu$ and the function $\varrho (t)$.
If in addition $A$ satisfies (\ref{hol}),
then for any multi-indices $\zeta,\xi,\eta$ with $0<  |\zeta|\leq m-1, |\xi|=|\eta|=m,$ we have
  \begin{equation} \label{expan2}
  \aligned
  \Big|D^\xi_x\Gamma_{ij}^{\varepsilon,A}(x,y)- & D^\xi_x\Gamma_{ij}^{0,A}(x,y)-   \sum_{|\gamma|=m}
  (D_x^\xi\chi_{ik}^\gamma) (x/\varepsilon) D_x^\gamma \Gamma_{kj}^{0,A}(x,y)  \Big|\\
&  \leq  \frac{C \varepsilon\ln (\varepsilon^{-1}|x-y|+1)} { |x-y|^{d+1-m} },
\endaligned
 \end{equation}
 \begin{equation}\label{expan3}
 \aligned
  \Big|D_x^\zeta D^\xi_y\Gamma_{ij}^{\varepsilon,A}(x,y)- & D_x^\zeta D^\xi_y\Gamma_{ij}^{0,A}(x,y)- \sum_{|\gamma|=m}
  D_x^\zeta D_y^\gamma \Gamma_{ik}^{0,A}(x,y)(D_y^\xi\chi_{jk}^{*\gamma}) (y/\varepsilon)\Big |\\
  & \leq
  \frac{C \varepsilon\ln (\varepsilon^{-1}|x-y|+1)} { |x-y|^{d+1+|\zeta|-m} },
  \endaligned
  \end{equation}
  \begin{equation}\label{expan4}
  |D_x^\eta D_y^\xi \Gamma_{ij}^{\varepsilon,A}(x,y) -  \Theta^{\eta,\xi}_{ij}(x,y) |
  \leq  \frac{C \varepsilon\ln (\varepsilon^{-1}|x-y|+1)} { |x-y|^{d+1} },
   \end{equation}
for any $ x,y \in \mathbb{R}^d$ and $ x\neq y,$ where $C$ depends only on $d, n, m, \mu, \Lambda_0$ and $\tau_0$, and  $$ \Theta^{\eta,\xi}_{ij}(x,y)=\sum_{|\sigma|=m} \sum_{|\gamma|=m} D_x^\eta\left\{ \frac{\delta_{ik}}{\sigma!} x^{\sigma}-  \varepsilon^m\chi_{ik}^{\sigma}  (x/\varepsilon ) \right\}D_x^{\sigma} D^\gamma_y\Gamma_{kl}^{0,A}(x,y)D_y^\xi \left\{\frac{\delta_{jl}}{\gamma!}y^\gamma -  \varepsilon^m\chi_{jl}^{*\gamma} (y/\varepsilon)  \right\}.$$
\end{theorem}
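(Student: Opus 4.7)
The approach is to combine the sharp convergence rate of Theorem \ref{tcon} with the uniform interior estimates of Theorems \ref{tlip} and \ref{twmp} via a duality argument. A preliminary step is to establish the size bounds
\[
|D_x^\eta D_y^\xi \Gamma^{\varepsilon,A}(x,y)|\le C\,|x-y|^{2m-d-|\eta|-|\xi|} \quad \text{for } |\eta|,|\xi|\le m,
\]
which for $\Gamma^{0,A}$ come from constant-coefficient theory and for $\Gamma^{\varepsilon,A}$ follow from the standard $L^p$ decay of the fundamental solution (via the energy estimate and Sobolev embedding), upgraded through Theorems \ref{tlip}, \ref{twmp} on balls of radius $|x-y|/8$, together with the symmetry $\Gamma^{\varepsilon,A}(x,y) = [\Gamma^{\varepsilon,A^*}(y,x)]^T$.

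For (\ref{expan1}), fix $x_0 \ne y_0$, set $r = |x_0 - y_0|$, and choose a mollifier $\psi \in C_c^\infty(B(x_0, r/8))$ with $\int \psi = 1$. By the adjoint symmetry,
\[
\int [\Gamma^{\varepsilon,A}(x, y_0) - \Gamma^{0,A}(x, y_0)] \psi(x)\,dx = v^*_\varepsilon(y_0) - v^*_0(y_0),
\]
where $v^*_\varepsilon$ solves the Dirichlet problem $\mathcal{L}^*_\varepsilon v^*_\varepsilon = \psi$ in $B(x_0, 4r)$ with vanishing Dirichlet data of order $\le m-1$. Applying Theorem \ref{tcon} (after rescaling to unit size) and using constant-coefficient regularity to bound $\|v^*_0\|_{H^{m+1}}$ gives $|v^*_\varepsilon(y_0) - v^*_0(y_0)| \le C \varepsilon\, r^{2m-d-1}$. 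The passage from the mollified integral to the pointwise value at $x_0$ is handled by the interior H\"older continuity of $\Gamma^{\varepsilon,A}(\cdot, y_0) - \Gamma^{0,A}(\cdot, y_0)$ on $B(x_0, r/8)$, provided by Theorem \ref{twmp}. Derivatives $D_y^\zeta$ of order $\le m-1$ are treated identically by mollifying the corresponding derivative of the Dirac mass at $y_0$.

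For the higher-order expansions (\ref{expan2})--(\ref{expan4}), the analogous argument is run on the two-scale corrected difference
\[
w_\varepsilon(x) := \Gamma^{\varepsilon,A}(x, y_0) - \Gamma^{0,A}(x, y_0) - \varepsilon^m \sum_{|\gamma|=m} \chi^\gamma(x/\varepsilon)\, D_x^\gamma \Gamma^{0,A}(x, y_0).
\]
A direct computation using the corrector equation and a higher-order flux corrector $B$ (generalizing the skew-symmetric $\phi^{\beta\gamma}$ of the second-order theory) puts $\mathcal{L}_\varepsilon w_\varepsilon$, away from $y_0$, in divergence form $\sum_{|\alpha|\le m-1} D^\alpha f^\alpha_\varepsilon$ with each $f^\alpha_\varepsilon$ of size $\varepsilon$ times the expected power of $|x - y_0|$. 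Theorem \ref{tlip} then yields (\ref{expan2}) for $|\nabla^m_x w_\varepsilon|(x_0)$; the logarithmic factor arises from summing the resulting dyadic estimate across scales $\varepsilon \le \rho \le r$. Estimate (\ref{expan3}) is obtained by the same scheme applied to the adjoint $\mathcal{L}^*_\varepsilon$ with extra $D_y^\zeta$-derivatives, and (\ref{expan4}) follows from a simultaneous two-scale expansion in both $x$ and $y$, with $\Theta^{\eta,\xi}$ in the statement being exactly the full such ansatz.

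The principal obstacle is the construction of the higher-order flux correctors needed to cast the residual of the two-scale expansion into divergence form of order $\le m-1$: unlike the second-order case, where a single skew-symmetric $\phi^{\beta\gamma}$ suffices, for $2m$-th order systems one must iterate through the hierarchy $|\alpha|= m, m-1, \ldots, 0$, each step invoking existence of bounded periodic solutions to an auxiliary cell problem whose well-posedness depends on the coercivity (\ref{cod2}). The sharp dyadic bookkeeping of the logarithmic factor between the microscopic regime $\rho \sim \varepsilon$ and the macroscopic regime $\rho \sim r$ is a secondary technicality.
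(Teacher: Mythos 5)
Your overall architecture (duality for (\ref{expan1}); two-scale corrected expansion plus uniform interior regularity for (\ref{expan2})--(\ref{expan4}); the adjoint relation for the $y$-derivatives) is the right one and matches the paper's strategy, but two steps as written do not go through. First, for (\ref{expan1}): the identity $\int [\Gamma^{\varepsilon,A}(x,y_0)-\Gamma^{0,A}(x,y_0)]\psi(x)\,dx = v^*_\varepsilon(y_0)-v^*_0(y_0)$ is false if $v^*_\varepsilon$ solves the \emph{Dirichlet problem} on $B(x_0,4r)$ --- that representation produces the Green function of the ball, not the fundamental solution of $\mathbb{R}^d$; you need whole-space solutions (as the paper uses, testing $\Gamma(x_0,\cdot)$ against $F$ supported near $y_0$ and bounding $\|\nabla^m w_\varepsilon\|_{L^2(\mathbb{R}^d)}$ directly from the divergence-form identity (\ref{pl6302}), which is simpler than Theorem \ref{tcon} because there is no boundary layer). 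Moreover, Theorem \ref{tcon} yields only an integrated $W^{m-1,q_0}$ bound over the ball; passing to the pointwise value at $y_0$ requires an interior $L^\infty$ comparison between $v^*_\varepsilon$ and $v^*_0$ (solutions of \emph{different} operators), which is exactly the content of the paper's Lemma \ref{lemm6.3} and forces you to carry the corrector terms $\varepsilon^m\chi^{*\gamma}(y/\varepsilon)D^\gamma v_0^*$; your H\"older-continuity remark only handles the (easier) $x$-variable mollification.

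Second, for (\ref{expan2}) your mechanism is off in a way that matters. After inserting the dual correctors of Lemma \ref{l2.1}, the residual $\mathcal{L}_\varepsilon w_\varepsilon$ is a divergence of order \emph{exactly} $m$ with an $O(\varepsilon)$, merely bounded, density (see (\ref{pl6302})); there is no iteration through a hierarchy $|\alpha|=m,m-1,\dots,0$ producing a divergence of order $\le m-1$, and consequently Theorem \ref{tlip} (which requires $|\alpha|\le m-1$) cannot be applied to $w_\varepsilon$. Indeed, if your claimed reduction held with $f^\alpha\in L^q$, $q>d$, Theorem \ref{tlip} would give (\ref{expan2}) \emph{without} the logarithm, contradicting your own assertion that the log comes from dyadic summation. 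The logarithm actually arises from the near-diagonal part of the representation formula: one writes $\nabla^m w_\varepsilon$ via the kernel $D_x^{\alpha'}D_y^\alpha\Gamma^{\varepsilon,A}(x,y)$ with $|\alpha'|=|\alpha|=m$, which decays like $|x-y|^{-d}$, and splits the integral at scale $\varepsilon$ using the H\"older continuity of the $O(\varepsilon)$ density (the paper's Lemma \ref{lemm6.4}, estimates (\ref{pl6403})). This step also requires the full-scale estimates (\ref{l62re3})--(\ref{l62re4}), hence the H\"older continuity of $A$. Your outline for (\ref{expan3})--(\ref{expan4}) via the adjoint and the simultaneous two-scale ansatz is correct in spirit once these two points are repaired.
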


Formal asymptotic expansions of the fundamental solutions
for second-order elliptic operators in $\mathbb{R}^d$ were obtained using the method of Bloch waves \cite{se}. Later on, using the compactness method, the asymptotic behavior of fundamental solutions was studied by M. Avellaneda and F. Lin in \cite{al91}, and
the results were used  to prove  $L^p$ estimates for singular integrals associated with
$\mathcal{L}_\varepsilon$. The approach and results were further refined   by C. Kenig, F. Lin, and Z. Shen in \cite{klsc}, where the asymptotic behavior of Green and Neumann functions on bounded domains was investigated.
Our Theorem \ref{texpan} extends the results in \cite{al91, klsc}
for fundamental solutions to higher-order systems.

We now describe the main ideas in the proof of the main results of this paper.
Our proof of Theorem \ref{tcon}  follows the general scheme in \cite{su, sh1,sz}.
We consider the function
$$
w_\varepsilon = u_\varepsilon-u_0-\varepsilon^m\sum_{|\gamma|=m}\chi^\gamma({x}/{\varepsilon}) S^2_\varepsilon(D^\gamma \tilde{u}_0 )\rho_\varepsilon
$$
(see (\ref{w})  for the details).
This function not only allows us to deal with rough coefficients,
 but also avoids the use of boundary correctors, which are rather complicated for higher-order systems.
The key step in the proof of Theorem \ref{tcon} is to show that
\begin{align} \label{1.15}\| w_\varepsilon\|_{H^m_0(\Omega)} \leq C \varepsilon^{1/2}
 \left\{\|\dot{g}\|_{W\!A^{m,2}(\partial\Omega)}+\|f\|_{W^{-m+1,p_0}(\Omega)}\right\},
  \end{align}
where $p_0=2d/(d+1)$.
To this end we introduce the dual correctors for $\mathcal{L}_\varepsilon$ and use
the non-tangential maximal function estimates for the homogenized problem,
given in \cite{pv}. With (\ref{1.15}) at our disposal,
the estimate (\ref{convre1}) is obtained by a duality argument, motivated by \cite{su}.

Using the convergence result, we are able to drive the large-scale $C^{m-1,1}$ estimate (\ref{larlip}),
 following the ideas in \cite{as, ak}.
However, instead of estimating how well a solution $u_\varepsilon$ is approximated by  "affine" functions,
 we need to estimate how well $u_\varepsilon$ is approximated by  polynomials of  degree $m$.
  The full scale estimate (\ref{lip}) follows from (\ref{larlip}) through a standard blowup argument.

The proof of $W^{m,p}$ estimate (\ref{wmp}) uses the large-scale  estimate (\ref{larlip}) and is based on a
real variable arguments originated from \cite{ci} and further developed in \cite{szf,szt}. Using this approach, the $W^{m,p}$ estimates for $ \mathcal{L}_\varepsilon u_\varepsilon=\sum_{|\alpha|\le m} D^\alpha f^\alpha$  are reduced to a weak reverse H\"{o}lder inequality for  solutions to $ \mathcal{L}_\varepsilon u_\varepsilon =0$.

 Finally,  for the proof of Theorem \ref{texpan},  we remark that fundamental solutions for higher-order elliptic systems with rough coefficients are constructed by A. Barton recently in \cite{ab}. Since no smoothness conditions on the coefficients are required, the regularities derived there are very limited. Here with smoothness and periodicity conditions on the coefficients,  uniform size estimates for the fundamental solutions follow from the $C^{m-1, \theta}$
 estimates. Based on the regularity results in
 Theorem \ref{tlip}, we are
  able to prove Theorem \ref{texpan},  following the ideas of \cite{klsc}.


\section{Preliminaries}

\subsection{Correctors and dual correctors}

Set $Q=[-\frac{1}{2}, \frac{1}{2}]^d$, and let $H^m_{per}(Q; \mathbb{R}^n )$ denote
the closure of the set of 1-periodic functions in $C^\infty(\mathbb{R}^d ; \mathbb{R}^n )$
 with respect to the $H^m(Q;\mathbb{R}^n)$ norm.
For $1\leq i, j\leq n$ and  multi-index $\gamma$ with $ |\gamma|=m,$ the matrix of correctors $\chi= (\chi^\gamma_{ij})$ for the operator $\mathcal{L}_\varepsilon$ is given by the cell problem in $Q$,
\begin{equation*}
\begin{cases}
 \sum_{|\alpha|=|\beta|=m}  D^\alpha \big[A_{ik}^{\alpha \beta}(y)D^\beta \chi_{kj}^\gamma (y)\big]
 =- \sum_{|\alpha|=m}  D^\alpha  \big[A_{ij}^{\alpha \gamma}(y)\big]
 ~~    \text{ in } Q,\\
 \chi_j^\gamma (y)\in  H^m_{per}(Q; \mathbb{R}^n ),\\
 \int_Q \chi_j^\gamma (y)\, dy=0,
 \end{cases}
\end{equation*}
of which the existence of a unique solution $ \chi_j^\gamma (y)=\left( \chi_{1j}^\gamma (y), \chi_{2j}^\gamma (y),...., \chi_{nj}^\gamma (y)\right)$ for each $\gamma$ and $j$ is ensured by the Lax-Milgram theorem
(see e.g.  \cite{pa}).
In the same way, we introduce the matrix of correctors $ \chi^*= (\chi^{*\gamma}_{ij})$ for the adjoint operators $\mathcal{L}^*_\varepsilon$ of $\mathcal{L}_\varepsilon$, where
$$
\mathcal{L}^*_\varepsilon=  (-1)^{m}\sum_{|\alpha|=|\beta|=m} D^\alpha\Big(A^{*\alpha\beta}
\Big(\frac{x}{\varepsilon}\Big)D^\beta  \Big), \text{ with } ~A^*=(A_{ij}^{*\alpha\beta})= ( A_{ji}^{\beta \alpha}),~1\leq i, j\leq  n.
$$
Set
\begin{align*}
\bar{A}_{ij}^{\alpha \beta}=\frac{1}{|Q|}\int_Q  \Big\{A_{ij}^{\alpha \beta}(y)
+ \sum_{|\gamma|=m}A_{i\ell}^{\alpha \gamma}(y)D^\gamma \chi_{\ell j}^\beta (y) \Big\}dy.
\end{align*}
It is known  (\cite[Lemma 3.2]{pa}) that the constant matrix $\bar{A}=(\bar{A}_{ij}^{\alpha \beta})$
satisfies the coercivity condition,
\begin{align*}
\sum_{|\alpha|=|\beta|=m}\int_{\mathbb{R}^d}
 D^\alpha \phi_i\bar{A}_{ij}^{\alpha \beta}D^\beta \phi_j
 \, dx\geq \mu \sum_{|\alpha|=m}\| D^\alpha \phi \|^2_{L^2(\mathbb{R}^d)}
 \quad
 \text{ for any } \phi \in C_c^\infty(\mathbb{R}^d;\mathbb{R}^n).
\end{align*}
  The operator \begin{align*}
(\mathcal{L}_0 u )_i  = (-1)^m \sum_{|\alpha|=|\beta|=m}  D^\alpha (\bar{A}_{ij}^{\alpha \beta}D^\beta u_j)
\end{align*}
is the homogenized operator for the family of elliptic operators $\mathcal{L}_\varepsilon$.

For $1\leq i,j\leq n$ and multi-indices $\alpha,\beta$ with $|\alpha|=|\beta|=m,$ set
\begin{align}\label{duc}
B_{ij}^{\alpha \beta}(y)=A_{ij}^{\alpha \beta}(y)+ \sum_{|\gamma|=m}A_{ik}^{\alpha \gamma}(y) (D^\gamma \chi_{kj}^\beta)(y)-\bar{A}_{ij}^{\alpha \beta}.
\end{align}
By the definitions of  $\chi^\gamma(y)$ and $\bar{A}_{ij}^{\alpha \beta}$
we see that  $B_{ij}^{\alpha \beta}\in L^2(Q) $ is 1-periodic with zero mean and that
\begin{align*}\sum_{|\alpha|=m}D^\alpha B_{ij}^{\alpha \beta}(y)=0
\quad \text{ for all } \beta \text{ with } |\beta|=m \text{ and }
1\le i, j \le n.
\end{align*}

\begin{lemma}\label{l2.1}
For any $1\leq i,j\leq n$ and multi-indices  $\alpha, \beta$ with
$|\alpha|=|\beta|=m,$  there exists a function $ \mathfrak{B}_{ij}^{\gamma\alpha\beta} $ such that
 \begin{align*}
 \mathfrak{B}_{ij}^{\gamma\alpha\beta}  & =- \mathfrak{B}_{ij}^{\alpha\gamma\beta} , ~~\sum_{|\gamma|=m}D^\gamma \mathfrak{B}_{ij}^{\gamma\alpha\beta} = B_{ij}^{\alpha \beta} ,
\\
& \|\mathfrak{B}_{ij}^{\gamma\alpha\beta} \|_{H^m(Q)}\leq  C  \|B_{ij}^{\alpha,\beta}\|_{L^2(Q)},
 \end{align*}
  where $C$ depends only on $d, n, m.$
\end{lemma}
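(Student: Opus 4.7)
The plan is to construct the flux correctors $\mathfrak{B}_{ij}^{\gamma\alpha\beta}$ by solving an auxiliary polyharmonic equation on the torus, mimicking the second-order construction in which one writes a divergence-free vector field as the divergence of an antisymmetric matrix. I will fix $i,j$ and $\beta$ throughout and abbreviate $B^\alpha := B_{ij}^{\alpha\beta}$. By the hypotheses collected just before the lemma, $B^\alpha \in L^2_{per}(Q)$ has zero mean and the family $\{B^\alpha\}_{|\alpha|=m}$ is \emph{divergence free} in the sense that $\sum_{|\alpha|=m} D^\alpha B^\alpha = 0$.

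First I would invoke the Lax--Milgram theorem (plus periodic elliptic regularity) for the polyharmonic operator $(-\Delta)^m$ on the torus $Q$: for each multi-index $\alpha$ with $|\alpha|=m$ there is a unique $f_\alpha \in H^{2m}_{per}(Q)$, with $\int_Q f_\alpha = 0$, solving $(-\Delta)^m f_\alpha = B^\alpha$, together with the bound $\|f_\alpha\|_{H^{2m}(Q)} \le C \|B^\alpha\|_{L^2(Q)}$. The key observation is that the function $h := \sum_{|\alpha|=m} D^\alpha f_\alpha$ is a periodic $H^m$ function of zero mean satisfying
\begin{equation*}
(-\Delta)^m h = \sum_{|\alpha|=m} D^\alpha\bigl((-\Delta)^m f_\alpha\bigr) = \sum_{|\alpha|=m} D^\alpha B^\alpha = 0,
\end{equation*}
since $(-\Delta)^m$ and $D^\alpha$ commute. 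On the torus, Fourier expansion shows that the only zero-mean distributional solution of $(-\Delta)^m h = 0$ is $h \equiv 0$. Hence $\sum_{|\alpha|=m} D^\alpha f_\alpha = 0$.

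Next, expanding the polyharmonic operator via the multinomial theorem, $(-\Delta)^m = (-1)^m \sum_{|\gamma|=m} \frac{m!}{\gamma!} D^{2\gamma}$, I define
\begin{equation*}
E^{\gamma\alpha} := (-1)^m \frac{m!}{\gamma!}\, D^\gamma f_\alpha, \qquad \mathfrak{B}_{ij}^{\gamma\alpha\beta} := E^{\gamma\alpha} - E^{\alpha\gamma}.
\end{equation*}
Antisymmetry $\mathfrak{B}_{ij}^{\gamma\alpha\beta} = -\mathfrak{B}_{ij}^{\alpha\gamma\beta}$ is built in. A direct computation gives
\begin{equation*}
\sum_{|\gamma|=m} D^\gamma \mathfrak{B}_{ij}^{\gamma\alpha\beta}
= (-\Delta)^m f_\alpha - (-1)^m \frac{m!}{\alpha!}\, D^\alpha \!\!\sum_{|\gamma|=m} D^\gamma f_\gamma
= B^\alpha,
\end{equation*}
where the second term vanishes by the identity established in the previous paragraph. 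Finally, the bound is immediate: since $|\gamma|=m$, we have $\|D^\gamma f_\alpha\|_{H^m(Q)} \le \|f_\alpha\|_{H^{2m}(Q)} \le C\|B^\alpha\|_{L^2(Q)}$, and by symmetry the analogous bound holds for $D^\alpha f_\gamma$, so $\|\mathfrak{B}_{ij}^{\gamma\alpha\beta}\|_{H^m(Q)} \le C \|B_{ij}^{\alpha\beta}\|_{L^2(Q)}$ after absorbing the sum over $\gamma,\alpha$ into $C$.

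The only genuinely delicate point is ensuring the compatibility $\sum_{|\alpha|=m} D^\alpha f_\alpha = 0$; without it the antisymmetrization step would leave a spurious term. This is where the divergence-free condition $\sum_\alpha D^\alpha B^\alpha = 0$ is essential, and that condition in turn is exactly what the corrector equation for $\chi^{\beta}_{\cdot j}$ delivers. Everything else is routine periodic elliptic theory for $(-\Delta)^m$.
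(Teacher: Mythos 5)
Your proof is correct and follows essentially the same route as the paper's: solve an auxiliary $2m$-th order constant-coefficient periodic problem for each $B^{\alpha}$, antisymmetrize, and kill the cross term via a Liouville/Fourier argument on the torus using $\sum_{|\alpha|=m} D^\alpha B^{\alpha}=0$. The only cosmetic differences are that the paper uses the operator $\sum_{|\gamma|=m}D^{2\gamma}$ (so no multinomial weights appear in the definition of $\mathfrak{B}_{ij}^{\gamma\alpha\beta}$) whereas you use $(-\Delta)^m$ and compensate with the factors $m!/\gamma!$, and that (exactly as in the paper's own statement) the bound you obtain really controls $\|\mathfrak{B}_{ij}^{\gamma\alpha\beta}\|_{H^m(Q)}$ by $\|B_{ij}^{\alpha\beta}\|_{L^2(Q)}+\|B_{ij}^{\gamma\beta}\|_{L^2(Q)}$, which is harmless for all subsequent uses.
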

\begin{proof}
This was proved in \cite{pa}, using Fourier series. Here we present a different proof.
 Fix $i,j$ and $\beta$.
 Since $B_{ij}^{\alpha\beta} $  is a 1-periodic function in $L^2(Q)$ with zero mean, by the Lax-Milgram theorem,
 there exists a $b_{ij}^{\alpha\beta} \in H_{per}^{2m}(Q)$  such that $ \fint_Q b_{ij}^{\alpha\beta} =0$,
 \begin{align*}
  \sum_{|\gamma|=m}   D^\gamma \Big(D^\gamma b_{ij}^{\alpha\beta}\Big)
  =B_{ij}^{\alpha\beta} \quad \text{ in  } Q \quad \text{ and } \quad
   \|b_{ij}^{\alpha\beta}\|_{H^{2m}(Q)}\leq C  \|B_{ij}^{\alpha\beta}\|_{L^2(Q)}.
 \end{align*}
 Set  $$\mathfrak{B}_{ij}^{\gamma\alpha\beta}(y)= D^\gamma b_{ij}^{\alpha\beta}-D^\alpha b_{ij}^{\gamma\beta} . $$
   It is obvious that $\mathfrak{B}_{ij}^{\gamma\alpha\beta}=-\mathfrak{B}_{ij}^{\alpha\gamma\beta}$
   and $ \mathfrak{B}_{ij}^{\gamma\alpha\beta} \in H_{per}^m(Q)$ with
   $$
   \|\mathfrak{B}_{ij}^{\gamma\alpha\beta}\|_{H^m(Q)}\leq C  \|B_{ij}^{\alpha\beta}\|_{L^2(Q)}.
   $$
 Note that $ \sum_{|\alpha|=m} D^\alpha b_{ij}^{\alpha\beta} $ is 1-periodic and
   $$
      \sum_{|\gamma|=m}   D^\gamma  \Big[D^\gamma  \sum_{|\alpha|=m} D^\alpha b_{ij}^{\alpha\beta} \Big]
      = \sum_{|\alpha|=m} D^\alpha B_{ij}^{\alpha\beta}=0.
      $$
      It follows by the Liouville property for the operator
      $\sum_{|\gamma|=m} D^{2\gamma}$ that
      $ \sum_{|\alpha|=m} D^\alpha b_{ij}^{\alpha\beta} $ is constant. As a result,
 \begin{align*}
 \sum_{|\gamma|=m}D^\gamma\mathfrak{B}_{ij}^{\gamma\alpha\beta} = \sum_{|\gamma|=m}   D^\gamma (D^\gamma b_{ij}^{\alpha\beta})- \sum_{|\gamma|=m}  D^\gamma (D^\alpha b_{ij}^{\gamma\beta}) =B_{ij}^{\alpha\beta}  .
 \end{align*}
This completes the proof.
\end{proof}

 The function  $\mathfrak{B}= (\mathfrak{B}_{ij}^{\gamma\alpha\beta})$ is called the matrix of dual correctors for operators $ \mathcal{L}_\varepsilon.$
 As in the second-order case (see e.g. \cite{su,klsa, sh1}),
 it plays an important  role in the study of sharp convergence rates.

 \subsection{An $\varepsilon$-smoothing operator}

We fix $\varphi\in C_c^\infty(B(0,1/2))$ such that $\varphi\ge 0$ and $\int_{\mathbb{R}^d} \varphi (x)dx=1$.
Set  $\varphi_\varepsilon (x)=\frac{1}{\varepsilon^d} \varphi(\frac{x}{\varepsilon})$ and define
\begin{align*}
S_\varepsilon(f)(x)=\int_{\mathbb{R}^d} \varphi_\varepsilon(x-y)f(y)\, dy.
\end{align*}

\begin{lemma}\label{l2.2}
Assume that $f\in L^p(\mathbb{R}^d)$ for some $1\leq p<\infty$ and $g\in L_{loc}^p(\mathbb{R}^d)$. Let $h\in L^\infty(\mathbb{R}^d)$ with support $\mathcal{O}$. Then
\begin{align*}
\|g^\varepsilon S_\varepsilon(f) h \|_{L^p(\mathbb{R}^d)}
\leq C \sup_{x\in \mathbb{R}^d}\left( \fint_{B(x,1)} |g|^p dy\right)^{1/p} \|f\|_{L^p(\mathcal{O}^\varepsilon )}\| h\|_\infty,
\end{align*}
where $g^\varepsilon (x)=g(\varepsilon^{-1}x)$,
$\mathcal{O}^\varepsilon=\{x\in \mathbb{R}^d: dist(x,\mathcal{O})\leq \varepsilon \}$.  If in addition  $g$ is 1-periodic, then
\begin{align*}
\|g^\varepsilon S_\varepsilon(f) h \|_{L^p(\mathbb{R}^d)}\leq C   \|g\|_{L^p(Q)} \|f\|_{L^p(\mathcal{O}^\varepsilon )} \| h\|_\infty.
\end{align*}
\end{lemma}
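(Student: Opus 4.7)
The plan is to decouple the three factors via a Jensen--Fubini argument. First, since $h$ is supported in $\mathcal{O}$, I would bound $|h| \leq \|h\|_\infty$ pointwise and reduce matters to estimating
$$I := \int_{\mathcal{O}} |g^\varepsilon(x)|^p |S_\varepsilon(f)(x)|^p\, dx.$$
Because $\varphi_\varepsilon \geq 0$ with $\int_{\mathbb{R}^d} \varphi_\varepsilon = 1$, Jensen's inequality applied to the probability measure $\varphi_\varepsilon(x-y)\,dy$ gives the pointwise bound
$$|S_\varepsilon(f)(x)|^p \leq \int_{\mathbb{R}^d} \varphi_\varepsilon(x-y) |f(y)|^p\, dy.$$
Substituting this and applying Fubini yields
$$I \leq \int_{\mathbb{R}^d} |f(y)|^p \Big( \int_{\mathcal{O}} \varphi_\varepsilon(x-y) |g^\varepsilon(x)|^p\, dx \Big) dy.$$

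Next I would handle the inner integral using the localization properties of $\varphi_\varepsilon$. Since $\mathrm{supp}\,\varphi_\varepsilon \subset B(0,\varepsilon/2)$ and $\varphi_\varepsilon \leq C\varepsilon^{-d}$, the inner integral is at most $C\varepsilon^{-d} \int_{B(y,\varepsilon/2)} |g^\varepsilon(x)|^p dx$. The change of variables $z = x/\varepsilon$ rewrites this as $C \int_{B(y/\varepsilon,\,1/2)} |g(z)|^p dz$, which, since $B(y/\varepsilon,1/2) \subset B(y/\varepsilon, 1)$, is bounded by $C \sup_{x\in\mathbb{R}^d} \fint_{B(x,1)} |g|^p$. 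Moreover, the outer integration in $y$ is effectively restricted to $y \in \mathcal{O}^{\varepsilon/2} \subset \mathcal{O}^\varepsilon$, since otherwise $\varphi_\varepsilon(x-y)\mathbf{1}_{\mathcal{O}}(x) \equiv 0$ in $x$. Combining these bounds and taking $p$-th roots yields the first assertion.

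The periodic case then follows from the first with essentially no extra work: if $g$ is $1$-periodic, any ball $B(x,1)$ is covered by a dimension-dependent number of translates of the unit cube $Q$, so $\sup_x \fint_{B(x,1)} |g|^p \leq C \|g\|_{L^p(Q)}^p$ and the second inequality is immediate.

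I do not expect a real obstacle here; the estimate amounts to a weighted convolution-type bound, and the only mildly delicate point is recognizing that the outer $y$-integration collapses onto the $\varepsilon$-neighborhood $\mathcal{O}^\varepsilon$ through the joint support properties of $\varphi_\varepsilon$ and $h$, together with the scaling identity $\varepsilon^{-d}\int_{B(y,\varepsilon/2)} |g(x/\varepsilon)|^p dx = \int_{B(y/\varepsilon,1/2)} |g(z)|^p dz$, which is exactly what turns the $\varepsilon$-independent bound in terms of $g$ on $B(y/\varepsilon, 1)$.
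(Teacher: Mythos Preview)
Your argument is correct. The paper's own proof is terser: it cites the known $h=1$ case and then reduces to it via the identity $S_\varepsilon(f)\,h = S_\varepsilon(f\chi_{\mathcal{O}^\varepsilon})\,h$, which holds because $\operatorname{supp}\varphi_\varepsilon\subset B(0,\varepsilon/2)$ and $\operatorname{supp} h\subset\mathcal{O}$. Your Jensen--Fubini computation is exactly the standard proof of that $h=1$ case, with the support restriction to $\mathcal{O}^\varepsilon$ emerging after Fubini rather than being imposed at the outset; so the two proofs are essentially the same, yours simply being self-contained rather than citing the literature.
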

\begin{proof}
The case $h=1$ is known (see e.g. \cite{sh1}). The general case follows from the observation that
$S_\varepsilon (f) h =S_\varepsilon (f \chi_{\mathcal{O}^\varepsilon}) h$.
\end{proof}

Let $ \Omega_{ \varepsilon}=\{x\in\Omega: dist(x, \partial \Omega)< \varepsilon\}$ and
$   \widetilde{\Omega}_\varepsilon= \{x\in \mathbb{R}^d:  dist(x,\partial\Omega)<\varepsilon\}.$

\begin{lemma}\label{l2.3}
Assume that  $f\in H^s(\mathbb{R}^d)$. Then for any multi-index $\alpha$ with $|\alpha|=s$,
\begin{align}
\|S_\varepsilon(D^\alpha f)\|_{L^2(\Omega_{\varepsilon})}\leq C \varepsilon^{-s}\|f\|_{L^2( \widetilde{\Omega}_{2\varepsilon})},\label{l2.3re1}\\
\|S_\varepsilon(D^\alpha f   )\|_{L^2(\Omega \setminus \Omega_{2\varepsilon} )}\leq C \varepsilon^{-s} \|f\|_{L^2(\Omega\setminus\Omega_{\varepsilon})}  .\label{l2.3re2}
\end{align}

\end{lemma}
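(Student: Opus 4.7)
\smallskip

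\noindent\textbf{Proof plan.} My approach is to transfer all $s$ derivatives off of $f$ and onto the mollifier $\varphi_\varepsilon$, extracting the factor $\varepsilon^{-s}$ explicitly, and then to exploit the small support of the resulting convolution kernel.

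First, since $\varphi \in C_c^\infty(B(0,1/2))$ and $f \in H^s(\mathbb{R}^d)$, I integrate by parts in the definition of $S_\varepsilon$ to write, for $|\alpha|=s$,
\begin{equation*}
S_\varepsilon(D^\alpha f)(x) \;=\; \int_{\mathbb{R}^d} \varphi_\varepsilon(x-y)\, D^\alpha f(y)\, dy \;=\; \int_{\mathbb{R}^d} (D^\alpha \varphi_\varepsilon)(x-y)\, f(y)\, dy \;=\; \varepsilon^{-s}\,(\psi_\varepsilon * f)(x),
\end{equation*}
where $\psi := D^\alpha \varphi \in C_c^\infty(B(0,1/2))$ and $\psi_\varepsilon(x) = \varepsilon^{-d}\psi(x/\varepsilon)$ is supported in $B(0,\varepsilon/2)$. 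Thus the problem reduces to estimating an ordinary convolution with an $L^1$ kernel of mass $\|\psi\|_{L^1}$.

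Second, I combine the support property $\mathrm{supp}\,\psi_\varepsilon \subset B(0,\varepsilon/2)$ with a simple distance comparison and then apply Young's inequality. For \eqref{l2.3re1}: if $x \in \Omega_\varepsilon$ and $y \in B(x,\varepsilon/2)$, then
\begin{equation*}
\mathrm{dist}(y,\partial\Omega) \leq |x-y| + \mathrm{dist}(x,\partial\Omega) < \tfrac{\varepsilon}{2} + \varepsilon < 2\varepsilon,
\end{equation*}
so $y \in \widetilde{\Omega}_{2\varepsilon}$. Hence $(\psi_\varepsilon * f)(x) = (\psi_\varepsilon * (f\chi_{\widetilde{\Omega}_{2\varepsilon}}))(x)$ for every $x\in\Omega_\varepsilon$, and Young's inequality yields
\begin{equation*}
\|S_\varepsilon(D^\alpha f)\|_{L^2(\Omega_\varepsilon)} \;\leq\; \varepsilon^{-s}\|\psi_\varepsilon\|_{L^1(\mathbb{R}^d)}\,\|f\|_{L^2(\widetilde{\Omega}_{2\varepsilon})} \;=\; \varepsilon^{-s}\|\psi\|_{L^1(\mathbb{R}^d)}\,\|f\|_{L^2(\widetilde{\Omega}_{2\varepsilon})}.
\end{equation*}
For \eqref{l2.3re2}: if $x \in \Omega \setminus \Omega_{2\varepsilon}$ and $y \in B(x,\varepsilon/2)$, then $\mathrm{dist}(y,\partial\Omega) \geq 2\varepsilon - \varepsilon/2 > \varepsilon$, so $y \in \Omega \setminus \Omega_\varepsilon$, and the same Young's inequality argument gives the second bound.

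There is no serious obstacle here; the only point requiring care is the elementary distance inequality which guarantees the inclusions $\mathrm{supp}(\psi_\varepsilon(x-\cdot)) \subset \widetilde{\Omega}_{2\varepsilon}$ for $x \in \Omega_\varepsilon$ and $\mathrm{supp}(\psi_\varepsilon(x-\cdot)) \cap \Omega \subset \Omega\setminus\Omega_\varepsilon$ for $x \in \Omega\setminus\Omega_{2\varepsilon}$. The factor $\varepsilon^{-s}$ is the natural cost of differentiating the mollifier $s$ times.
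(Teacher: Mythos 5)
Your proof is correct and follows essentially the same route as the paper's: integrate by parts to move all $s$ derivatives onto the mollifier (producing the factor $\varepsilon^{-s}$), use the support of $\varphi_\varepsilon$ to localize the integral to $\widetilde{\Omega}_{2\varepsilon}$ (resp.\ $\Omega\setminus\Omega_\varepsilon$), and then bound the convolution by an $L^1$--$L^2$ estimate. The only cosmetic difference is that you invoke Young's inequality by name, whereas the paper carries out the same bound in-line via Cauchy--Schwarz and Fubini.
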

\begin{proof}
Using integration by parts and the Cauchy inequality, we see that
\begin{align*}
\|S_\varepsilon(D^\alpha f  )\|^2_{L^2(\Omega_\varepsilon)} &=\int_{\Omega_\varepsilon}  \Big|\int_{\mathbb{R}^d}
D^\alpha \varphi_\varepsilon (x-y)  f(y)     \, dy \Big|^2   \, dx\nonumber\\
&\leq  \frac{C}{\varepsilon^{s}}
\int_{\Omega_\varepsilon} \int_{\widetilde{\Omega}_{2\varepsilon}}
|D^\alpha\varphi_\varepsilon (x-y)|\, |f(y)|^2 \, dy dx\nonumber\\
&\leq \frac{C}{\varepsilon^{2s}}
\int_{\widetilde{\Omega}_{2\varepsilon}} |f(y) |^2  dy\nonumber,
\end{align*}
where we also used Fubini's Theorem for the last inequality.
This gives (\ref{l2.3re1}).
 The proof of (\ref{l2.3re2}) is similar.
\end{proof}

Let $\nabla^s f =\big( D^\alpha f \big)_{|\alpha|=s}$.

\begin{lemma}\label{l2.4}
Let $f\in W^{1,q}(\mathbb{R}^d)$ for some $1\leq q<\infty$. Then
\begin{align}\label{l2.4re1}
\|  S_\varepsilon(f)-f\|_{L^q(\mathbb{R}^d)}\leq C \varepsilon   \|\nabla f\|_{L^q(\mathbb{R}^d)},
\end{align}
Furthermore,  if  $f\in W^{s,p}(\mathbb{R}^d)$, where $p=\frac{2d}{d+2k-1}$ and $ 1\le k <\frac{d+1}{2} $,
  then
\begin{align}
&\|  S_\varepsilon(\nabla^s f)\|_{L^2(\mathbb{R}^d)}\leq C \varepsilon^{-s-k+\frac{1}{2}}     \|f\|_{L^p(\mathbb{R}^d)}
\quad \text{ if } \ \ s\ge 0, \label{l2.4re2}\\
&\| S_\varepsilon(f)\|_{L^2(\mathbb{R}^d)}\leq C  \varepsilon^{s-k+\frac{1}{2}}  \|\nabla^sf\|_{L^p(\mathbb{R}^d )} \quad
\text{ if } \ \ 0\leq s\le k-1, \label{l2.4re3}\\
&\|  S_\varepsilon(f)-f\|_{L^2(\mathbb{R}^d)}\leq C \varepsilon^{s-k+\frac{1}{2}}     \|\nabla^s f\|_{L^p(\mathbb{R}^d)}
\quad \text{ if } \ \ 0\leq s\le k.\label{l2.4re4}
\end{align}
\end{lemma}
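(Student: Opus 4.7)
Estimate (\ref{l2.4re1}) is the standard mollifier convergence. Writing $S_\varepsilon f(x)-f(x) = \int\varphi_\varepsilon(y)[f(x-y)-f(x)]\,dy$ and $f(x-y)-f(x) = -\int_0^1 y\cdot\nabla f(x-ty)\,dt$, Minkowski's inequality in $L^q$ together with translation invariance and $\int|y|\varphi_\varepsilon(y)\,dy = O(\varepsilon)$ immediately delivers the bound.

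For (\ref{l2.4re2}), integration by parts yields $S_\varepsilon(\nabla^s f) = (\nabla^s\varphi_\varepsilon)\ast f$. Apply Young's convolution inequality $\|g\ast f\|_{L^2}\leq\|g\|_{L^q}\|f\|_{L^p}$ with $\frac{1}{q} = \frac{3}{2} - \frac{1}{p}$; the scaling $\|\nabla^s\varphi_\varepsilon\|_{L^q} = \varepsilon^{-s - d + d/q}\|\nabla^s\varphi\|_{L^q}$ together with the arithmetic $-d+d/q = \frac{1}{2}-k$---which is precisely how the exponent $p = 2d/(d+2k-1)$ is chosen---produces the claimed $\varepsilon^{-s-k+1/2}$.

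Then (\ref{l2.4re3}) follows from (\ref{l2.4re2}) by a Gagliardo--Nirenberg--Sobolev reduction. With $p^{(s)} := 2d/(d+2(k-s)-1)$, one has $\|f\|_{L^{p^{(s)}}}\leq C\|\nabla^s f\|_{L^p}$ in the stated range $0\leq s\leq k-1$ (where $sp<d$). Applying (\ref{l2.4re2}) with the parameter substitution $(s,k,p)\leftarrow(0,\,k-s,\,p^{(s)})$ then gives $\|S_\varepsilon f\|_{L^2}\leq C\varepsilon^{-(k-s)+1/2}\|f\|_{L^{p^{(s)}}}\leq C\varepsilon^{s-k+1/2}\|\nabla^s f\|_{L^p}$.

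The delicate estimate is (\ref{l2.4re4}), whose critical case is $s=k$---a genuine convergence rate. On the Fourier side $\widehat{S_\varepsilon f - f}(\xi) = (\hat\varphi(\varepsilon\xi)-1)\hat f(\xi)$; since $\hat\varphi(0)=1$ with $\hat\varphi\in C_b^\infty$, we have $|\hat\varphi(\varepsilon\xi)-1|\leq C\min(1,\varepsilon|\xi|)\leq C(\varepsilon|\xi|)^{1/2}$. Plancherel then yields $\|S_\varepsilon f - f\|_{L^2}^2\leq C\varepsilon\int|\xi|\,|\hat f(\xi)|^2\,d\xi = C\varepsilon\|(-\Delta)^{1/4}f\|_{L^2}^2$, and the Hardy--Littlewood--Sobolev inequality $\|(-\Delta)^{1/4}f\|_{L^2}\leq C\|\nabla^k f\|_{L^p}$---whose index identity $\frac{1}{p}-\frac{k-1/2}{d}=\frac{1}{2}$ is exactly how $p$ was defined---closes the $s=k$ case. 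For $0\leq s<k$, combine (\ref{l2.4re3}) with the triangle inequality $\|S_\varepsilon f-f\|_{L^2}\leq\|S_\varepsilon f\|_{L^2}+\|f\|_{L^2}$, noting that for $\varepsilon\leq 1$ the factor $\varepsilon^{s-k+1/2}\geq 1$ absorbs the lower-order $\|f\|_{L^2}$ contribution. The main obstacle is securing the half-derivative gain in the $s=k$ case, produced by the interpolated symbol bound $(\varepsilon|\xi|)^{1/2}$ and converted to an $L^p$-norm of $\nabla^k f$ via HLS with the carefully tuned exponent $p$.
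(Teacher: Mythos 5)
Your arguments for (\ref{l2.4re1})--(\ref{l2.4re3}) and for the case $s=k$ of (\ref{l2.4re4}) are correct, and they take a genuinely different route from the paper: where the paper works entirely on the Fourier side (Parseval, H\"{o}lder against the exponent pair $\frac{2d}{2k-1}$, $\frac{2d}{d-2k+1}$, then Hausdorff--Young), you prove (\ref{l2.4re2}) in physical space by Young's inequality applied to $(\nabla^s\varphi_\varepsilon)\ast f$, and you obtain (\ref{l2.4re3}) as a corollary of (\ref{l2.4re2}) via the Sobolev embedding $\|f\|_{L^{p^{(s)}}}\le C\|\nabla^s f\|_{L^p}$ rather than by a separate Fourier computation. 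The exponent bookkeeping checks out: $-d+d/q=\frac12-k$ in (\ref{l2.4re2}), and $1\le k-s<\frac{d+1}{2}$ together with $sp<d$ in the range $0\le s\le k-1$. For $s=k$ in (\ref{l2.4re4}), your symbol bound $|\hat\varphi(\varepsilon\xi)-1|\le C(\varepsilon|\xi|)^{1/2}$ combined with $\dot W^{k,p}\hookrightarrow \dot H^{1/2}$ (HLS plus Riesz-transform boundedness, legitimate since $1<p<2$) carries the same content as the paper's H\"{o}lder--Hausdorff--Young computation.

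The genuine gap is your treatment of (\ref{l2.4re4}) for $0\le s<k$. The absorption step needs $\|f\|_{L^2}\le C\|\nabla^s f\|_{L^p}$, which is false on $\mathbb{R}^d$: the Sobolev exponent of $\dot W^{s,p}$ is $p^{(s)}=\frac{2d}{d+2(k-s)-1}<2$ when $s\le k-1$, and $L^{p^{(s)}}$ does not embed into $L^2$ on an unbounded domain. Concretely, for $f_\lambda(x)=f(\lambda x)$ one has $\|f_\lambda\|_{L^2}/\|\nabla^s f_\lambda\|_{L^p}=\lambda^{k-s-1/2}\to\infty$ as $\lambda\to\infty$. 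You should be aware, however, that the same dilation shows the stated inequality (\ref{l2.4re4}) is itself scale-inconsistent for integer $s<k$ (the left side is comparable to $\lambda^{-d/2}$ for high-frequency $f_\lambda$, while the right side decays like $\lambda^{s-d/p}$ with $s-d/p<-d/2$), and the paper's own proof silently breaks at the same point: the integral $\int(|\hat\varphi(\xi)-\hat\varphi(0)|\,|\xi|^{-s})^{2d/(2k-1)}d\xi$ diverges at infinity unless $s>k-\frac12$, i.e.\ unless $s=k$. Only the case $s=k$ is used later (in (\ref{pt319}), with $k=1$), and that is precisely the case you prove correctly; but as written your argument for $s<k$ does not close, and no argument can, so the right fix is to restrict the claim to $s=k$ rather than to patch the absorption step.
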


\begin{proof}
The inequality (\ref{l2.4re1}) is known (see e.g.  \cite[Lemma 2.2]{sh1} for a proof).
By Parseval's theorem and H\"{o}lder's  inequality, we have
\begin{align}
&\int_{\mathbb{R}^d} |S_\varepsilon(\nabla^sf)|^2dx=
(2\pi)^{2s}\int_{\mathbb{R}^d} |\widehat{\varphi}(\varepsilon\xi)|^2 | \xi|^{2s}|\widehat{f}(\xi)|^2d\xi\nonumber\\
& \leq (2\pi)^{2s}\left\{\int_{\mathbb{R}^d}
\left( |\widehat{\varphi}(\varepsilon\xi)|| \xi|^{s} \right)^\frac{2d}{2k-1}   d\xi\right\}^\frac{2k-1}{d}
 \left\{\int_{\mathbb{R}^d} |\widehat{f}(\xi)|^\frac{2d}{d-2k+1}d\xi\right\}^\frac{d-2k+1}{d}   \nonumber\\
& =(2\pi)^{2s}  \varepsilon^{-2s-2k+1} \left\{\int_{\mathbb{R}^d} \left( |\widehat{\varphi}(\xi)|| \xi|^{s} \right)^\frac{2d}{2k-1}d\xi\right\}^\frac{2k-1}{d} \|\widehat{f}\|^2_{L^\frac{2d}{d-2k+1}(\mathbb{R}^d )}\nonumber\\
&\leq C  \varepsilon^{-2s-2k+1}  \|f\|^2_{L^\frac{2d}{d+2k-1}(\mathbb{R}^d )},\nonumber
\end{align}
where we have used  the Hausdorff-Young inequality in the last step. This gives (\ref{l2.4re2}).

To prove (\ref{l2.4re3}), we note that
\begin{align}
&\int_{\mathbb{R}^d} |S_\varepsilon(f)|^2dx=\int_{\mathbb{R}^d} |\widehat{\varphi}(\varepsilon\xi)|^2|\xi|^{-2s} |\xi|^{2s}|\widehat{f}(\xi)|^2d\xi\nonumber\\
& \leq \left\{\int_{\mathbb{R}^d} \left(|\xi|^{-s}|\widehat{\varphi}(\varepsilon\xi)|\right)^\frac{2d}{2k-1}d\xi\right\}^\frac{2k-1}{d}   \left\{\int_{\mathbb{R}^d} \left(|\xi|^{s}|\widehat{f}(\xi)|\right)^\frac{2d}{d-2k+1}d\xi\right\}^\frac{d-2k+1}{d}   \nonumber\\
& \leq C \varepsilon^{-2k+1+2s} \left\{\int_{\mathbb{R}^d} \left(|\xi|^{-s}|\widehat{\varphi}(\xi)|\right)^ \frac{2d}{2k-1}d\xi\right\}^\frac{2k-1}{d} \||\xi|^{s}\widehat{f}\|^2_{L^\frac{2d}{d-2k+1}(\mathbb{R}^d )}\nonumber\\
& \leq C \varepsilon^{-2k+1+2s} \left\{\int_{|\xi|\leq 1} \left( \frac{1} { |\xi|^{s}} \right)^ \frac{2d}{2k-1}d\xi+
 \int_{|\xi|> 1}   |\widehat{\varphi}(\xi)| ^ \frac{2d}{2k-1}d\xi
  \right\}^\frac{2k-1}{d} \|\widehat{\nabla^sf}\|^2_{L^\frac{2d}{d-2k+1}(\mathbb{R}^d )}\nonumber\\
&\leq C  \varepsilon^{-2k+1+2s}  \|\nabla^sf\|^2_{L^\frac{2d}{d+2k-1}(\mathbb{R}^d )},\nonumber
\end{align}
where in the last step we have used the Hausdorff-Young inequality and also the fact that
$ \frac{2ds}{2k-1}< d \ (\text{since } s\le k-1)$.

Finally, since $ \widehat{\varphi}(0)=\int_{\mathbb{R}^d}\varphi (x) dx=1,$
\begin{align*}
&\int_{\mathbb{R}^d} |S_\varepsilon(f)-f|^2dx=\int_{\mathbb{R}^d} |\widehat{\varphi}(\varepsilon\xi)-\widehat{\varphi}(0)|^2 |\widehat{f}|^2d\xi\nonumber\\
& \leq \left\{\int_{\mathbb{R}^d}(|\widehat{\varphi}(\varepsilon\xi)-\widehat{\varphi}(0)| |\xi|^{-s})^\frac{2d}{2k-1} d\xi\right\}^\frac{2k-1}{d}  \||\xi|^s\widehat{f}\|^2_{L^\frac{2d}{d-2k+1}(\mathbb{R}^d )}\nonumber\\
&\leq C \varepsilon^{2s-2k+1}  \left\{\int_{\mathbb{R}^d} (|\widehat{\varphi}(\xi)-\widehat{\varphi}(0)|  |\xi|^{-s})^\frac{2d}{2k-1} d\xi\right\}^\frac{2k-1}{d}    \|\widehat{\nabla^sf}\|^2_{L^\frac{2d}{d-2k+1}(\mathbb{R}^d )}\nonumber\\
&\leq C \varepsilon^{2s-2k+1}  \|\nabla^sf\|^2_{L^\frac{2d}{d+2k-1}(\mathbb{R}^d )},
\end{align*}
which is exactly (\ref{l2.4re4}). For the last inequality, we have used  $| \widehat{\varphi}(\xi)-\widehat{\varphi}(0)|\leq C |\xi| $, the
assumption  $s\le k$ and the Hausdorff-Young inequality.
\end{proof}

\begin{lemma}\label{l2.5}
Let $\Omega$ be a bounded Lipschitz domain in $\mathbb{R}^d$ and $f\in H^{m+1}( \mathbb{R}^d)$.
 Then
\begin{align*}
\int_{\widetilde{\Omega}_\varepsilon} |\nabla^m f(x)|^2dx\leq C \varepsilon \|f\|^2_{H^{m+1}( \mathbb{R}^d)}.
\end{align*}

\end{lemma}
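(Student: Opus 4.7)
The plan is to reduce the estimate to a standard \(W^{1,1}\) layer estimate. Since \(f\in H^{m+1}(\mathbb{R}^d)\) implies \(\nabla^m f\in H^1(\mathbb{R}^d)\), the nonnegative function \(g:=|\nabla^m f|^2\) lies in \(W^{1,1}(\mathbb{R}^d)\) and satisfies, via \(|\nabla g|\le 2|\nabla^m f|\,|\nabla^{m+1}f|\) and Cauchy--Schwarz,
\[
\|g\|_{L^1(\mathbb{R}^d)} \le \|\nabla^m f\|_{L^2(\mathbb{R}^d)}^2, \qquad \|\nabla g\|_{L^1(\mathbb{R}^d)} \le 2\|\nabla^m f\|_{L^2(\mathbb{R}^d)}\|\nabla^{m+1} f\|_{L^2(\mathbb{R}^d)},
\]
so that \(\|g\|_{L^1(\mathbb{R}^d)}+\|\nabla g\|_{L^1(\mathbb{R}^d)}\le C\|f\|_{H^{m+1}(\mathbb{R}^d)}^2\).

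The task therefore reduces to proving the following general inequality: for every nonnegative \(h\in W^{1,1}(\mathbb{R}^d)\) and every sufficiently small \(\varepsilon>0\),
\[
\int_{\widetilde{\Omega}_\varepsilon} h(x)\, dx \le C\varepsilon\bigl(\|h\|_{L^1(\mathbb{R}^d)}+\|\nabla h\|_{L^1(\mathbb{R}^d)}\bigr),
\]
with \(C\) depending only on \(\Omega\). To prove this I would choose a finite open cover \(\{U_k\}_{k=1}^N\) of \(\partial\Omega\) together with rotated coordinate systems \((x',x_d)\) in each \(U_k\) so that \(\partial\Omega\cap U_k\) is the graph \(\{x_d=\psi_k(x')\}\) of a Lipschitz function \(\psi_k\) with Lipschitz constant at most \(M_0\), and a finite collection of interior sets, which is standard for a bounded Lipschitz domain. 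For \(\varepsilon\) small enough (depending only on the geometry),
\[
\widetilde{\Omega}_\varepsilon\cap U_k \subset \bigl\{(x',x_d)\in U_k:\ |x_d-\psi_k(x')|<C_0\varepsilon\bigr\}, \qquad C_0:=\sqrt{1+M_0^2}.
\]

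In each chart I would use Fubini together with an averaging-over-levels trick to bypass the delicate trace theory for \(W^{1,1}\) functions: fix an auxiliary unit interval \(I\subset\mathbb{R}\) disjoint from \((-C_0\varepsilon,C_0\varepsilon)\) but contained in the chart; for \(|t|<C_0\varepsilon\) and \(T\in I\), by the fundamental theorem of calculus along the line \(s\mapsto (x',\psi_k(x')+s)\),
\[
h(x',\psi_k(x')+t) \le h(x',\psi_k(x')+T) + \int_{-C_0\varepsilon}^{T} |\partial_d h|(x',\psi_k(x')+s)\,ds,
\]
then average in \(T\in I\), integrate \(|t|<C_0\varepsilon\), and integrate over \(x'\). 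After the measure-preserving change of variables \(x_d=\psi_k(x')+T\) the first term contributes at most \(2C_0\varepsilon\|h\|_{L^1(\mathbb{R}^d)}\), while the second contributes at most \(2C_0\varepsilon\|\partial_d h\|_{L^1(\mathbb{R}^d)}\). Summing over \(k=1,\dots,N\) yields the displayed \(W^{1,1}\) layer bound; applying it to \(h=g\) gives the lemma.

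The only genuine (and minor) obstacle is the last step: a function in \(W^{1,1}(\mathbb{R}^d)\) need not admit a well-defined trace on the single Lipschitz hypersurface \(\{x_d=\psi_k(x')+T\}\), which is why the averaging over the parameter \(T\in I\) is essential. Once that averaging is incorporated, all remaining steps are standard Fubini and change-of-variables computations, and the constants depend only on \(d\), \(m\) and the Lipschitz character of \(\Omega\).
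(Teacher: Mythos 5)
Your proof is correct and follows essentially the same route as the paper: the paper's proof consists of citing the known $m=0$ layer estimate $\int_{\widetilde{\Omega}_\varepsilon}|f|^2 \le C\varepsilon\|f\|_{H^1(\mathbb{R}^d)}^2$ and applying it to $\nabla^m f$, which is exactly your reduction to the $W^{1,1}$ bound for $g=|\nabla^m f|^2$. The only difference is that you supply a self-contained proof of the base layer estimate (via local Lipschitz charts, the fundamental theorem of calculus along vertical lines, and averaging over the parameter $T$ to avoid trace issues), whereas the paper outsources it to the references; your argument for that step is the standard one and is sound.
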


\begin{proof}
See e.g.  \cite{ps} or  \cite{sh1} for the case $m=0$.
The case $m\ge 1$ follows by applying the estimate to the function $\nabla^mf$.
\end{proof}

The following is a Caccioppoli inequality  for higher-order elliptic systems.

\begin{lemma}\label{l2.6}
Assume that $A$ satisfies (\ref{cod1})--(\ref{cod2}). Let $u\in H^m(2B; \mathbb{R}^n )$ be a weak
solution to $\mathcal{L}_1  u  = \sum_{|\alpha|\le m}D^\alpha f^\alpha$ in $2B$ for a ball $B=B(x_0, r),
$ where $f^\alpha \in L^2(2B; \mathbb{R}^n )$. Then for $1\leq k\leq m$, we have
\begin{align}
\int_B |\nabla^k u  |^2\leq   \frac{C}{r^{2k}} \int_{2B}|u  |^2
+C\sum_{|\alpha|\le m} r^{4m-2|\alpha|-2k} \int_{2B} |f^\alpha|^2,\label{l2.6re2}
\end{align}
where $C$ depends only on $d$, $m$, $n$ and $\mu$.
\end{lemma}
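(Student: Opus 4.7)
The plan is to reduce to the case $k=m$ and then obtain $1\le k\le m-1$ by an interpolation inequality for intermediate derivatives. By rescaling we may also focus on getting the right structure; the powers of $r$ follow from the standard dilation $y=x/r$ which turns $\mathcal{L}_1$ into an operator of the same form (with the same $\mu$), so no generality is lost by tracking the factors of $r$ as they appear.

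For the main case $k=m$, I would pick a chain of nested cutoffs: $B\subset B_1\subset B_2\subset 2B$ with $\eta\in C_c^\infty(B_2)$, $\eta\equiv 1$ on $B_1$, and $|D^\gamma\eta|\le Cr^{-|\gamma|}$ for $|\gamma|\le m$. The key is to test the weak formulation with $v=\eta^{2m}u\in H^m_0(2B)$, which gives
$$\sum_{|\alpha|=|\beta|=m}\int D^\alpha(\eta^{2m}u)_i\,A^{\alpha\beta}_{ij}\,D^\beta u_j=\sum_{|\alpha|\le m}(-1)^{|\alpha|}\int f^\alpha\cdot D^\alpha(\eta^{2m}u).$$
To put the left side in a form where coercivity applies, I would rewrite it by moving one factor of $\eta^m$ onto each side: a Leibniz expansion shows that modulo explicit commutator terms, the left-hand integrand is
$$\sum_{|\alpha|=|\beta|=m}D^\alpha(\eta^m u)_i\,A^{\alpha\beta}_{ij}\,D^\beta(\eta^m u)_j,$$
and the coercivity condition (\ref{cod2}), applied to $\phi=\eta^m u$ extended by zero, bounds this from below by $\mu\sum_{|\alpha|=m}\|D^\alpha(\eta^m u)\|_{L^2}^2$. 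Another Leibniz expansion pulls $\eta^m$ back out, giving a lower bound of $c\int\eta^{2m}|\nabla^m u|^2$ minus lower-order remainder terms bounded by $C\sum_{j=0}^{m-1}r^{-2(m-j)}\int_{B_2}|\nabla^j u|^2$.

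Putting both sides together and estimating the $f^\alpha$-terms by Cauchy--Schwarz with weights $r^{m-|\alpha|}$, and absorbing the top-order commutator contributions into the left-hand side via Young's inequality, I obtain
$$\int_{B_1}|\nabla^m u|^2\le C\sum_{j=0}^{m-1}\frac{1}{r^{2(m-j)}}\int_{B_2}|\nabla^j u|^2+C\sum_{|\alpha|\le m}r^{2(m-|\alpha|)}\int_{2B}|f^\alpha|^2.$$
The intermediate derivatives $\nabla^j u$ for $0<j<m$ are then handled by the weighted interpolation inequality
$$\int|\nabla^j u|^2\le \delta\,r^{2(m-j)}\int|\nabla^m u|^2+C_\delta\,r^{-2j}\int|u|^2$$
on suitable balls. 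Iterating on a geometric sequence of $m+1$ nested cutoff radii between $r$ and $2r$ (a standard Campanato/ADN-style argument), the $\nabla^j u$ terms for $1\le j\le m-1$ are absorbed into a small fraction of the $\nabla^m u$ norm on a slightly larger ball, leaving only the $u$- and $f^\alpha$-terms on the right. This yields (\ref{l2.6re2}) for $k=m$. Finally, the cases $1\le k\le m-1$ follow by applying the same interpolation inequality once more, using the bound just established on a slightly larger ball.

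The main obstacle is bookkeeping: the Leibniz expansion of $D^\alpha(\eta^{2m}u)$ and of $D^\alpha(\eta^m u)$ produces many commutator terms, and one must separate those that contain $\nabla^m u$ (absorbable by Young's inequality with small constant) from those with $\nabla^j u$, $j<m$ (handled by interpolation). The iteration on nested cutoffs—needed precisely because at each step one loses a layer by Leibniz and must trade off powers of $r$—is what makes the clean final form with a $r^{-2m}\int|u|^2$ term and the gain $r^{4m-2|\alpha|-2k}$ on the $f^\alpha$ terms work out.
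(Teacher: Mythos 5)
Your argument is essentially the standard proof of the higher-order Caccioppoli inequality; the paper itself does not prove Lemma \ref{l2.6} but simply cites \cite{cs} and \cite{ab}, and the argument in those references is the one you describe: test with $\eta^{2m}u$, pass to the quadratic form in $\eta^m u$ so that the coercivity condition (\ref{cod2}) (which is stated only for compactly supported $\phi$, hence the need to insert the cutoff \emph{before} invoking it) applies, weight the $f^\alpha$-terms by $r^{m-|\alpha|}$ in Cauchy--Schwarz, and then remove the intermediate derivatives by interpolation plus absorption over nested radii. Your bookkeeping of the powers of $r$ is consistent with (\ref{l2.6re2}), and the reduction of $1\le k\le m-1$ to $k=m$ via one more interpolation on a slightly larger ball is correct.

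One point in the write-up is looser than it should be: the absorption step. After interpolation you are left with an inequality of the form $\Theta(\rho)\le \tfrac12\Theta(\rho')+C(\rho'-\rho)^{-2m}\|u\|_{L^2(2B)}^2+CF$ for all $r\le\rho<\rho'\le 2r$, where $\Theta(\rho)=\int_{B_\rho}|\nabla^m u|^2$ and $F$ collects the $f^\alpha$-terms. Iterating this over only $m+1$ nested radii, as you state, leaves a residual term $2^{-(m+1)}\Theta(2r)$ on the right which lives on a strictly larger ball and cannot be absorbed into the left side. The standard fix --- and presumably what you intend by ``Campanato/ADN-style'' --- is the iteration (absorption) lemma: take an infinite sequence of radii $\rho_i\uparrow 2r$ with geometrically shrinking gaps, choose the ratio so that $\sum_i 2^{-i}(\rho_{i+1}-\rho_i)^{-2m}$ converges, and use the finiteness of $\Theta(2r)$ (from $u\in H^m(2B)$) to kill the residual term in the limit. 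With that standard device substituted for the finite iteration, your proof is complete and matches the references the paper points to.
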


\begin{proof}
See e.g. \cite{cs} and \cite{ab}.
\end{proof}


\section{Convergence rates}

Let $\rho_\varepsilon$ be a function in $C_c^\infty(\Omega)$ satisfying the following conditions,
\begin{align*}
 \begin{split}supp(\rho_\varepsilon)\subset\{x\in\Omega: dist(x, \partial \Omega)\geq  3\varepsilon\} = \Omega\setminus \Omega_{3\varepsilon},\\
 0\leq\rho_\varepsilon\leq 1,~\rho_\varepsilon=1  \text{ on } \Omega\setminus \Omega_{4\varepsilon},
 \text{ and } |\nabla^k \rho_\varepsilon|\le C \varepsilon^{-k} \text{ for } 1\le k\le m.
\end{split}\end{align*}

\begin{lemma}\label{l3.1}
Let $\Omega$ be a bounded Lipschitz domain in $\mathbb{R}^d$.
Assume that the matrix $ A$ satisfies (\ref{cod1})--(\ref{cod3}).
Let $u_\varepsilon~(\varepsilon\geq0)$ be the weak solution to Dirichlet problem (\ref{eq1}).
Suppose that $u_0\in H^{m+1}(\Omega;\mathbb{R}^d)$
with $\tilde{u}_0\in H^{m+1}(\mathbb{R}^d;\mathbb{R}^n)$ being its extension.
Define \begin{align}\label{w}
w_\varepsilon = u_\varepsilon-u_0-\varepsilon^m\sum_{|\gamma|=m}\chi^\gamma(\frac{x}{\varepsilon}) S^2_\varepsilon(D^\gamma \tilde{u}_0 )\rho_\varepsilon,\end{align}
  where $ S^2_\varepsilon= S_\varepsilon \circ S_\varepsilon $.
  Then for any $\phi \in H^m_0(\Omega;\mathbb{R}^n),$ we have
\begin{equation}
\label{l3.1re1}
\aligned
 & \Big|\sum_{|\alpha|=|\beta|=m} \int_\Omega D^\alpha \phi_i  A_{ij}^{\alpha\beta}(\frac{x}{\varepsilon})
 D^\beta w_{\varepsilon j}\Big|\\
& \leq C \| \nabla^m \phi\|_{L^2(\Omega_{4\varepsilon})}\| \nabla^m u_0\|_{L^{2}(\Omega_{4\varepsilon})}
 + C\| \nabla^m \phi\|_{L^2(\Omega)} \|  \nabla^m \tilde{u}_0
 -S_\varepsilon(\nabla^m \tilde{u}_0)  \|_{L^2(\Omega\setminus \Omega_{2\varepsilon})}\\
 &+ C  \| \nabla^m \phi\|_{L^2(\Omega_{4\varepsilon})} \sum_{ 0\leq k \leq m-1,  }\varepsilon^{k } \| S_\varepsilon(\nabla^{m+k}  \tilde{u}_0 )\|_{L^2(\Omega_{5\varepsilon}\setminus \Omega_{2\varepsilon})}\\
&+  C \|\nabla^m \phi\|_{L^2(\Omega)} \sum_{0\leq k\leq m-1}\varepsilon^{m-k } \|  S_\varepsilon(\nabla^{2m-k}  \tilde{u}_0    )\|_{L^2(\Omega\setminus \Omega_{2\varepsilon})},
\endaligned
\end{equation}
where $C$ depends only on $d, n, m, \mu$ and $\Omega.$
Consequently,
\begin{align}\label{l3.1re2}
\Big|\sum_{|\alpha|=|\beta|=m} \int_\Omega D^\alpha \phi_i A_{ij}^{\alpha\beta}(\frac{x}{\varepsilon}) & D^\beta w_{\varepsilon j}\Big |
 \leq  C \|u_0\|_{H^{m+1}(\Omega)}\left\{\varepsilon \|\nabla^m \phi\|_{L^2(\Omega)}  +\varepsilon^{1/2}  \|\nabla^m \phi\|_{L^2(\Omega_{4\varepsilon})} \right \}.
\end{align}
\end{lemma}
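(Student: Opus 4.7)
The plan is to combine the weak formulations of $u_\varepsilon$ and $u_0$ with the dual-corrector identity of Lemma \ref{l2.1}, exchanging the $O(1)$ oscillation $A(x/\varepsilon)-\bar A$ for a factor $\varepsilon^m$ times a derivative of a bounded periodic $L^2$ function. Reducing by density to $\phi\in C_c^\infty(\Omega;\mathbb{R}^n)$ and testing $\mathcal{L}_\varepsilon u_\varepsilon=f=\mathcal{L}_0 u_0$ against $\phi$, one gets, with the summation over $|\alpha|=|\beta|=|\gamma|=m$ and over components suppressed,
\[
\int A(x/\varepsilon)D^\beta w_\varepsilon\,D^\alpha\phi
=\int[\bar A-A(x/\varepsilon)]D^\beta u_0\,D^\alpha\phi
-\varepsilon^m\!\int A(x/\varepsilon)D^\beta\bigl[\chi^\gamma(x/\varepsilon)\rho_\varepsilon S^2_\varepsilon(D^\gamma\tilde u_0)\bigr]D^\alpha\phi.
\]
In the first integral I would substitute $\rho_\varepsilon S^2_\varepsilon(D^\beta\tilde u_0)$ for $D^\beta u_0$: on $\Omega_{4\varepsilon}$, where $\rho_\varepsilon\le 1$, this produces the first and part of the third RHS term of (\ref{l3.1re1}) via the trivial split $|D^\beta u_0|+|S^2_\varepsilon(D^\beta\tilde u_0)|$; on $\Omega\setminus\Omega_{4\varepsilon}$, where $\rho_\varepsilon=1$, the error equals $D^\beta\tilde u_0-S^2_\varepsilon(D^\beta\tilde u_0)$, bounded via $\|S^2_\varepsilon g-g\|_{L^2}\le 2\|S_\varepsilon g-g\|_{L^2}$, giving the second RHS term.

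Next I would Leibniz-expand $D^\beta[\chi^\gamma(x/\varepsilon)\rho_\varepsilon S^2_\varepsilon(D^\gamma\tilde u_0)]$. Its leading piece --- all $m$ derivatives on $\chi(x/\varepsilon)$ --- combines with the $A(x/\varepsilon)D^\gamma\chi^\beta(x/\varepsilon)$ part of the replaced first integral via $B^{\alpha\beta}=A^{\alpha\beta}+A^{\alpha\gamma}D^\gamma\chi^\beta-\bar A^{\alpha\beta}$, reducing the sum to $-\int B^{\alpha\beta}(x/\varepsilon)\rho_\varepsilon S^2_\varepsilon(D^\beta\tilde u_0)\,D^\alpha\phi$. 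Using $B^{\alpha\beta}(x/\varepsilon)=\varepsilon^m\sum_{|\sigma|=m}D^\sigma_x[\mathfrak{B}^{\sigma\alpha\beta}(x/\varepsilon)]$ from Lemma \ref{l2.1} and integrating by parts $m$ times, the contribution in which all $m$ derivatives land on $\phi$ cancels because $\mathfrak{B}^{\sigma\alpha\beta}=-\mathfrak{B}^{\alpha\sigma\beta}$ is paired with the $(\sigma,\alpha)$-symmetric $D^{\sigma+\alpha}\phi$. The remaining contributions, after further integrations by parts that move any surplus derivatives from $\phi$ back onto $\mathfrak{B}(x/\varepsilon)$ and $F:=\rho_\varepsilon S^2_\varepsilon(D^\beta\tilde u_0)$, take the form $\varepsilon^{m-|\rho|}(D^\rho\mathfrak{B})(x/\varepsilon)\,D^\lambda F\cdot D^\alpha\phi$ with $|\rho|+|\lambda|=m$, $|\rho|\le m-1$, and $D^\rho\mathfrak{B}\in L^2_{per}(Q)$. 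A final Leibniz on $D^\lambda F$ splits each such term into: (i) no derivative on $\rho_\varepsilon$, supported in $\Omega\setminus\Omega_{3\varepsilon}$ and giving, via Lemma \ref{l2.2}, the fourth term of (\ref{l3.1re1}) with $k=|\rho|$; and (ii) at least one derivative on $\rho_\varepsilon$, localized to $\Omega_{4\varepsilon}\setminus\Omega_{3\varepsilon}$ and giving the third term with $k=|\lambda_1|$. The sub-leading Leibniz pieces of the corrector integral (fewer than $m$ derivatives on $\chi$) are estimated identically by Lemma \ref{l2.2} using $D^{\beta_1}\chi\in L^2_{per}(Q)$ and fall into exactly the same two categories.

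Finally, for (\ref{l3.1re2}): Lemma \ref{l2.5} gives $\|\nabla^m u_0\|_{L^2(\Omega_{4\varepsilon})}\le C\varepsilon^{1/2}\|u_0\|_{H^{m+1}}$; Lemma \ref{l2.4}, (\ref{l2.4re1}), gives $\|\nabla^m\tilde u_0-S_\varepsilon(\nabla^m\tilde u_0)\|_{L^2}\le C\varepsilon\|u_0\|_{H^{m+1}}$; and shifting the surplus $s-m-1$ derivatives from $\tilde u_0$ onto the kernel $\varphi_\varepsilon$ yields $\|S_\varepsilon(\nabla^s\tilde u_0)\|_{L^2}\le C\varepsilon^{m+1-s}\|u_0\|_{H^{m+1}}$ for $s\ge m+1$. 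These exponents collapse the (c)-sums to rate $\varepsilon^{1/2}\|\nabla^m\phi\|_{L^2(\Omega_{4\varepsilon})}$ (worst case $k=0$, handled once more by Lemma \ref{l2.5} on the near-boundary $L^2$ norm) and the (d)-sums to rate $\varepsilon\|\nabla^m\phi\|_{L^2(\Omega)}$. I expect the main obstacle to be the combinatorial bookkeeping in the $B$-integral step: each multi-index split must be matched to the correct spatial region and to the $m$-derivative ceiling on $\phi$, with the skew-symmetry of $\mathfrak{B}$ doing the essential work of eliminating the would-be $(\sigma+\alpha)$-order derivative on $\phi$ that would otherwise blow the regularity budget.
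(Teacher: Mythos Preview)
Your proposal is correct and follows essentially the same route as the paper: the decomposition into the $(\bar A-A)$ piece, the sub-leading Leibniz terms from the corrector, and the $B^{\alpha\beta}$ piece handled via Lemma~\ref{l2.1} with the antisymmetry of $\mathfrak{B}^{\gamma\alpha\beta}$ killing the would-be $D^{\sigma+\alpha}\phi$ term, followed by Lemmas~\ref{l2.2}--\ref{l2.5} for~(\ref{l3.1re2}). The only cosmetic difference is in the first integral: the paper splits $D^\beta u_0-\rho_\varepsilon S_\varepsilon^2(D^\beta\tilde u_0)$ algebraically as $(1-\rho_\varepsilon)D^\beta u_0+[D^\beta\tilde u_0-S_\varepsilon(D^\beta\tilde u_0)]\rho_\varepsilon+[S_\varepsilon-S_\varepsilon^2](D^\beta\tilde u_0)\rho_\varepsilon$, getting only the first two right-hand terms of~(\ref{l3.1re1}), whereas your region-based split routes the $S_\varepsilon^2$ contribution on $\Omega_{4\varepsilon}$ into the $k=0$ case of the third term; both lead to~(\ref{l3.1re2}) identically.
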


\begin{proof}
For the simplicity of presentation, we will omit the subscripts $i,j$.
Using the definitions of $w_\varepsilon$ and $B$ in (\ref{duc}),
a direct computation shows that  for any $\phi\in H^m_0(\Omega;\mathbb{R}^n)$,
 \begin{equation}\label{pl311}
 \aligned
 & \sum_{|\alpha|=|\beta|=m}  \int_\Omega D^\alpha \phi
A^{\alpha\beta}(\frac{x}{\varepsilon})  D^\beta w_\varepsilon\, dx   \\
 & =  - \sum_{|\alpha|=|\beta|=m}\int_\Omega   D^\alpha \phi
\Big\{ \Big[A^{\alpha \beta}(\frac{x}{\varepsilon})-\bar{A}^{\alpha \beta}\Big]
 \Big[D^\beta u_0-S^2_\varepsilon(D^\beta \tilde{u}_0)\rho_\varepsilon\Big] \Big\}   \\
&\ \ \ - \sum_{\substack{|\alpha|=|\beta|=|\gamma|=m\\ \zeta+\eta =\beta\\  0\leq|\zeta|\leq m-1}}
C(\zeta,\eta)\varepsilon^{m-|\zeta|}  \int_\Omega D^\alpha \phi \left\{A^{\alpha \beta}(\frac{x}{\varepsilon}) (D^{\zeta}\chi^\gamma)(\frac{x}{\varepsilon})  D^{\eta} \Big[ S^2_\varepsilon(D^\gamma \tilde{u}_0 )\rho_\varepsilon\Big]\right\} \\
&\ \ \ -\sum_{|\alpha|=|\beta|=m}\int_\Omega D^\alpha \phi
 B^{\alpha \beta}(\frac{x}{\varepsilon})  S^2_\varepsilon(D^\beta \tilde{u}_0)\rho_\varepsilon   \\
&\doteq~ I_1+ I_2+ I_3.
\endaligned
\end{equation}
Using
\begin{equation}
\aligned
& D^\beta u_0-S^2_\varepsilon(D^\beta \tilde{u}_0)\rho_\varepsilon\\
& =\big[ D^\beta u_0-(D^\beta \tilde{u}_0) \rho_\varepsilon\big]
+ \big[ D^\beta \tilde{u}_0- S_\varepsilon(D^\beta \tilde{u}_0)\big]\rho_\varepsilon
+\big[S_\varepsilon ( D^\beta \tilde{u}_0) - S^2_\varepsilon(D^\beta \tilde{u}_0 ) \big]\rho_\varepsilon,
\endaligned
\end{equation}
 we obtain
\begin{align}\label{pl312}
|I_1|
 \leq C    \| \nabla^m \phi\|_{L^2(\Omega_{4\varepsilon})}\| \nabla^m u_0\|_{L^{2}(\Omega_{4\varepsilon})}
 +C\| \nabla^m \phi\|_{L^2(\Omega)} \| \nabla^m \tilde{u}_0-S_\varepsilon(\nabla^m \tilde{u}_0) \|_{L^2(\Omega\setminus \Omega_{2\varepsilon})}.
\end{align}

To deal with $I_2,$ we observe that for  $|\alpha|=|\beta|=|\gamma|=m,$
\begin{align} \label{pl313}
&\sum_{\zeta+\eta =\beta, 0\leq|\zeta|\leq m-1} C(\zeta,\eta) \varepsilon^{m-|\zeta|} \int_\Omega D^\alpha \phi \left\{A^{\alpha \beta}(\frac{x}{\varepsilon}) (D^{\zeta}\chi^\gamma)(\frac{x}{\varepsilon})  D^{\eta} \left[S^2_\varepsilon(D^\gamma \tilde{u}_0) \rho_\varepsilon \right]\right\}  \nonumber\\
&=\varepsilon^{m}     \int_\Omega D^\alpha \phi \left\{A^{\alpha \beta}(\frac{x}{\varepsilon})
\chi^\gamma  (\frac{x}{\varepsilon})   S^2_\varepsilon(D^{\beta}D^\gamma \tilde{u}_0 )  \rho_\varepsilon \right \}  \nonumber\\
&+  \varepsilon^{m }   \sum_{ \eta^\prime+\eta^{\prime\prime}
=\beta,  |\eta''|\geq1 }C(\eta',\eta'') \int_\Omega D^\alpha \phi \left\{A^{\alpha \beta}(\frac{x}{\varepsilon}) \chi^\gamma (\frac{x}{\varepsilon})   S^2_\varepsilon(D^{\eta' }D^\gamma \tilde{u}_0) D^{\eta'' } \rho_\varepsilon \right\}   \nonumber\\
&+    \sum_{\zeta+ \eta=\beta, 1\leq |\zeta|,|\eta|   }\varepsilon^{m-|\zeta|}
C(\zeta,\eta) \int_\Omega D^\alpha \phi \left\{A^{\alpha \beta}(\frac{x}{\varepsilon}) (D^{\zeta}\chi^\gamma)(\frac{x}{\varepsilon})   S^2_\varepsilon(D^{\eta}D^\gamma \tilde{u}_0 )  \rho_\varepsilon\right\}  \nonumber\\
&+  \sum_{\zeta+\eta'+\eta''=\beta, 1\leq  |\zeta|, |\eta''|  } \varepsilon^{m-|\zeta|}
C(\zeta,\eta',\eta'')\int_\Omega D^\alpha \phi \left\{A^{\alpha \beta}(\frac{x}{\varepsilon}) (D^{\zeta}\chi^\gamma)(\frac{x}{\varepsilon})   S^2_\varepsilon(D^{\eta'}D^\gamma \tilde{u}_0 ) D^{\eta''} \rho_\varepsilon\right\}    \nonumber\\
& \doteq I_{21}+I_{22}+I_{23}+I_{24}.
\end{align}
Note that by Cauchy inequality and Lemma \ref{l2.2},
\begin{align}\label{pl314}
 |I_{21}|&\leq C \varepsilon^{m }  \| \nabla^m \phi\|_{L^2(\Omega)} \| \chi^\gamma  (\frac{x}{\varepsilon})   S^2_\varepsilon(\nabla^{2m} \tilde{u}_0   ) \rho_\varepsilon  \|_{L^2(\Omega)}\nonumber\\
 & \leq C \varepsilon^{m }  \| \nabla^m \phi\|_{L^2(\Omega)} \| S_\varepsilon( \nabla^{2m} \tilde{u}_0 )    \|_{L^2(\Omega\setminus\Omega_{2\varepsilon} )},
\end{align}
\begin{align}\label{pl315}
 |I_{23}|&\leq C  \sum_{1\leq k\leq m-1}\varepsilon^{m-k}  \| \nabla^m \phi\|_{L^2(\Omega)} \| \nabla^k \chi^\gamma  (\frac{x}{\varepsilon})   S^2_\varepsilon(\nabla^{2m-k}  \tilde{u}_0   )\rho_\varepsilon \|_{L^2(\Omega)}\nonumber\\
 & \leq C  \sum_{1\leq k\leq m-1}\varepsilon^{m-k }  \| \nabla^m \phi\|_{L^2(\Omega)} \|  S_\varepsilon(\nabla^{2m-k}  \tilde{u}_0   ) \|_{L^2(\Omega\setminus\Omega_{2\varepsilon} )}.
\end{align}
Similarly, we have
 \begin{align}\label{pl316}
 |I_{22}|&\leq C  \sum_{1\leq k\leq m}\varepsilon^{m}  \| \nabla^m \phi\|_{L^2(\Omega_{4\varepsilon})} \|  \chi^\gamma  (\frac{x}{\varepsilon})
 S^2_\varepsilon(\nabla^{2m-k}  \tilde{u}_0  )\nabla^{k} \rho_\varepsilon\|_{L^2(\Omega_{4\varepsilon})}\nonumber\\
 &\leq C \sum_{0\leq k\leq m-1}\varepsilon^{k }  \| \nabla^m \phi\|_{L^2(\Omega_{4\varepsilon})} \|  S_\varepsilon(\nabla^{m+k}  \tilde{u}_0) \|_{L^2(\Omega_{5\varepsilon}\setminus \Omega_{2\varepsilon})},
\end{align}
and
\begin{align}\label{pl317}
 |I_{24}|&\leq C  \sum_{k_1+k_2+k_3= m, k_1,k_3\geq 1}\varepsilon^{m-k_1}  \| \nabla^m \phi\|_{L^2(\Omega_{4\varepsilon})} \| \nabla^{k_1} \chi^\gamma  (\frac{x}{\varepsilon})   S^2_\varepsilon(\nabla^{m+k_2}  \tilde{u}_0 ) \nabla^{k_3} \rho_\varepsilon\|_{L^2(\Omega_{4\varepsilon})}\nonumber\\
 &\leq C  \sum_{0\leq k\leq m-2}\varepsilon^{k}  \| \nabla^m \phi\|_{L^2(\Omega_{4\varepsilon})} \| S_\varepsilon(\nabla^{m+k}  \tilde{u}_0 ) \|_{L^2(\Omega_{5\varepsilon}\setminus \Omega_{2\varepsilon})}.
\end{align}
By combining (\ref{pl313})--(\ref{pl317}), we obtain that
\begin{align}\label{pl318}
|I_2 | \leq & C  \sum_{0\leq k\leq m-1}\varepsilon^{m-k }  \| \nabla^m \phi\|_{L^2(\Omega)} \|  S_\varepsilon(\nabla^{2m-k}  \tilde{u}_0    )\|_{L^2(\Omega\setminus \Omega_{2\varepsilon})}\nonumber\\
&+ C   \sum_{ 0\leq k \leq m-1 }\varepsilon^{k } \| \nabla^m \phi\|_{L^2(\Omega_{4\varepsilon})} \| S_\varepsilon(\nabla^{m+k} \tilde{u}_0 )\|_{L^2(\Omega_{5\varepsilon}\setminus \Omega_{2\varepsilon})},
\end{align}
 where
$ C $    depends only on $d,n,m,\mu$ and $\Omega$.

Now let us turn to $I_3.$ Using Lemma \ref{l2.1}, we deduce that
\begin{align*}
 I_3&=-\varepsilon^m \sum_{|\alpha|=|\beta|=|\gamma|=m}\int_\Omega  D^\alpha \phi   (D^\gamma \mathfrak{B}^{\gamma \alpha \beta})(\frac{x}{\varepsilon})  S^2_\varepsilon(D^\beta \tilde{u}_0 ) \rho_\varepsilon     \nonumber \\
&=\varepsilon^m \sum_{|\alpha|=|\beta|=|\gamma|=m} (-1)^{m+1}\int_\Omega  D^\gamma D^\alpha \phi    \mathfrak{B}^{\gamma\alpha \beta}(\frac{x}{\varepsilon})    S^2_\varepsilon(D^\beta \tilde{u}_0) \rho_\varepsilon     ~~~~~~ \nonumber\\
&\quad
+\varepsilon^{m} \sum_{\substack{|\alpha|=|\beta|=|\gamma|=m\\
\zeta+\eta=\gamma, \ 0\leq|\zeta|\leq m-1}}
C (\zeta, \eta) \int_\Omega D^\zeta D^\alpha \phi   \mathfrak{B}^{\gamma\alpha \beta}(\frac{x}{\varepsilon})  D^\eta \left[ S^2_\varepsilon(D^\beta \tilde{u}_0) \rho_\varepsilon \right]     \nonumber\\
&= \sum_{\substack{ |\alpha|=|\beta|=|\gamma|=m\\ \zeta'+\eta' =\gamma,\  0\leq|\zeta'|\leq m-1}}
C (\zeta^\prime, \eta^\prime)
 \varepsilon^{m-|\zeta'| } \int_\Omega D^\alpha \phi
(D^{\zeta'}\mathfrak{B}^{\gamma\alpha \beta}) (\frac{x}{\varepsilon})D^{\eta'}
\left[ S^2_\varepsilon(D^\beta \tilde{u}_0) \rho_\varepsilon\right]  , \nonumber
\end{align*}
which may be handled in the same manner as $I_2$.
As a result,
\begin{align}\label{pl319}
|I_3|  \leq  & C  \sum_{0\leq k\leq m-1}\varepsilon^{m-k }  \| \nabla^m \phi\|_{L^2(\Omega)} \|  S_\varepsilon(\nabla^{2m-k}  \tilde{u}_0    )\|_{L^2(\Omega\setminus \Omega_{2\varepsilon})}\nonumber\\
&+ C   \sum_{ 0\leq k \leq m-1,  }\varepsilon^{k } \| \nabla^m \phi\|_{L^2(\Omega_{4\varepsilon})} \| S_\varepsilon(\nabla^{m+k} \tilde{u}_0 )\|_{L^2(\Omega_{5\varepsilon}\setminus \Omega_{2\varepsilon})}.
\end{align}
In view of  (\ref{pl311}), (\ref{pl312}), (\ref{pl318}) and (\ref{pl319}), we
have proved (\ref{l3.1re1}).

To derive (\ref{l3.1re2}), let us examine the four terms in the RHS of (\ref{l3.1re1}).
 Thanks to Lemma \ref{l2.5} and (\ref{l2.4re1}) in Lemma \ref{l2.4}, we have
\begin{align}\label{pl320}
\begin{split}
 &\| \nabla^m u_0\|_{L^{2}(\Omega_{2\varepsilon})} \leq C \varepsilon^{1/2} \|u_0\|_{H^{m+1}(\Omega)}, \\
& \| \nabla^m \tilde{u}_0  -S_\varepsilon (\nabla^m \tilde{u}_0) \|_{L^2(\Omega\setminus \Omega_{3\varepsilon})} \leq C  \varepsilon \|  \nabla^{m+1} \tilde{u}_0\|_{L^2(\mathbb{R}^d)}\leq C\varepsilon \|u_0\|_{H^{m+1}(\Omega)}.
\end{split}
\end{align}
By Lemmas \ref{l2.3} and \ref{l2.5}, we see that
\begin{align}\label{pl321}
\sum_{0\leq k\leq m-1}\varepsilon^{k } & \|  S_\varepsilon(\nabla^{m+k}  \tilde{u}_0 )\|_{L^2(\Omega_{5\varepsilon}\setminus \Omega_{2\varepsilon})}
 \leq  C \|   \nabla^m  {u}_0   \|_{L^2(\Omega_{6\varepsilon}\setminus \Omega_{\varepsilon})}
 \leq C\varepsilon^{1/2} \|u_0\|_{H^{m+1}(\Omega)}.
 \end{align}
Finally, Lemma \ref{l2.3} implies that
\begin{align}\label{pl322}
\sum_{0\leq k\leq m-1}\varepsilon^{m-k }     \|  S_\varepsilon(\nabla^{2m-k}  \tilde{u}_0    )\|_{L^2(\Omega\setminus \Omega_{2\varepsilon})} &\leq\sum_{0\leq k\leq m-1}\varepsilon   \|  S_\varepsilon(\nabla^{m+1}  \tilde{u}_0    )\|_{L^2(\Omega\setminus \Omega_{\varepsilon})}  \nonumber\\
 &\leq C \varepsilon   \|u_0\|_{H^{m+1}( \Omega)}.
\end{align}
 In view of (\ref{pl320})--(\ref{pl322}) and (\ref{l3.1re1}),  we obtain (\ref{l3.1re2}).
\end{proof}

\begin{theorem}\label{t3.1}
Let $\Omega$ be a bounded Lipschitz domain in $\mathbb{R}^d$.
Assume that the matrix $ A$ satisfies
 (\ref{cod1})--(\ref{cod3}). Let $u_\varepsilon$ $(\varepsilon\geq0)$ be the weak solution
 to  Diricchlet problem (\ref{eq1}) with $\dot{g}=\{g_\gamma\}_{|\gamma|\leq m-1}
 \in W\!A^{m,2}(\partial\Omega;\mathbb{R}^n) , f\in W^{-m+1, p_0}(\Omega;\mathbb{R}^n)$
 and $ p_0=\frac{2d}{d+1}.$
Then
\begin{align}\label{t3.1re1}
&\| w_\varepsilon\|_{H^m_0(\Omega)}\leq C\varepsilon^{1/2} \|u_0 \|_{H^{m+1}(\Omega)},
\end{align} where $w_\varepsilon$ is defined in (\ref{w}).
If in addition $ A$ is symmetric, i.e. $A=A^*$, then
\begin{align}\label{t3.1re2}
 \| w_\varepsilon\|_{H^m_0(\Omega)} \leq C \varepsilon^{1/2}  \left\{\|\dot{g}\|_{W\!A^{m,2}(\partial\Omega)}+\|f\|_{W^{-m+1, p_0}(\Omega)}\right\}.
\end{align}
\end{theorem}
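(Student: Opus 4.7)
The plan splits into two parts: estimate (\ref{t3.1re1}) is an almost immediate consequence of Lemma \ref{l3.1}, while (\ref{t3.1re2}) requires returning to the sharper inequality (\ref{l3.1re1}) and invoking non-tangential maximal function estimates for the homogenized Dirichlet problem. For (\ref{t3.1re1}) I would first verify that $w_\varepsilon\in H^m_0(\Omega)$: since $u_\varepsilon$ and $u_0$ share the same Whitney--Sobolev data $\dot{g}$, their difference has vanishing Dirichlet traces up to order $m-1$, and the corrector term vanishes on $\Omega_{3\varepsilon}$ by construction of $\rho_\varepsilon$. Testing (\ref{l3.1re2}) with $\phi=w_\varepsilon$, the coercivity (\ref{cod2}) gives a lower bound $\mu\|\nabla^m w_\varepsilon\|_{L^2(\Omega)}^2$ on the left, while the right side is at most $C\varepsilon^{1/2}\|u_0\|_{H^{m+1}(\Omega)}\|\nabla^m w_\varepsilon\|_{L^2(\Omega)}$ (absorbing the $\varepsilon$ term into the $\varepsilon^{1/2}$ term since $\Omega_{4\varepsilon}\subset\Omega$). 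Dividing and applying Poincar\'e on $H^m_0$ yields (\ref{t3.1re1}).

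For (\ref{t3.1re2}) the obstacle is that $u_0$ is generally only in $H^m(\Omega)$, not $H^{m+1}$, on a Lipschitz domain; a density argument in the data first reduces to the case $\tilde u_0\in H^{m+1}(\R^d)$, where Lemma \ref{l3.1} applies. I plan to bypass the $\|u_0\|_{H^{m+1}}$ dependence by returning to the sharper inequality (\ref{l3.1re1}) with $\phi=w_\varepsilon$ and controlling each of its four terms directly in terms of the data. Under the symmetry $A=A^*$, the homogenized matrix $\bar A$ is symmetric, so $\mathcal{L}_0$ is self-adjoint and the $L^2$ Dirichlet problem for $\mathcal{L}_0$ is well-posed on Lipschitz domains in the sense of \cite{pv}, with
\[
\|(\nabla^m u_0)^*\|_{L^2(\partial\Omega)}
\le C\bigl(\|\dot{g}\|_{W\!A^{m,2}(\partial\Omega)}
+\|f\|_{W^{-m+1,p_0}(\Omega)}\bigr),
\]
where $(\cdot)^*$ is the non-tangential maximal function and $p_0=2d/(d+1)$. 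From this I would extract two further consequences: (i) the collar estimate $\|\nabla^m u_0\|_{L^2(\Omega_t)}\le Ct^{1/2}\|(\nabla^m u_0)^*\|_{L^2(\partial\Omega)}$, via a standard slicing of $\Omega_t$ along level sets of $\mathrm{dist}(\cdot,\partial\Omega)$; and (ii) the square-function estimate $\int_\Omega \mathrm{dist}(x,\partial\Omega)|\nabla^{m+1}u_0|^2\,dx\le C\|(\nabla^m u_0)^*\|_{L^2(\partial\Omega)}^2$, also standard for self-adjoint elliptic systems in Lipschitz domains.

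With (i) and (ii) in hand, the four terms on the right-hand side of (\ref{l3.1re1}) are controlled as follows: the first is bounded by (i) directly with $t=4\varepsilon$; the second is $\le C\varepsilon\|\nabla^{m+1}u_0\|_{L^2(\Omega\setminus\Omega_\varepsilon)}$ by Lemma \ref{l2.4} applied on the interior region where $u_0$ is smooth by interior regularity for $\mathcal{L}_0$, which by (ii) is $\le C\varepsilon^{1/2}$ times the data; the thin-shell terms $\varepsilon^k\|S_\varepsilon(\nabla^{m+k}\tilde u_0)\|_{L^2(\Omega_{5\varepsilon}\setminus\Omega_{2\varepsilon})}$ reduce via Lemma \ref{l2.3}(\ref{l2.3re1}) to $\|\nabla^m u_0\|_{L^2(\Omega_{6\varepsilon}\setminus\Omega_\varepsilon)}$ and are handled by (i); the remaining terms $\varepsilon^{m-k}\|S_\varepsilon(\nabla^{2m-k}\tilde u_0)\|_{L^2(\Omega\setminus\Omega_{2\varepsilon})}$ reduce via Lemma \ref{l2.3}(\ref{l2.3re2}) to $\varepsilon\|\nabla^{m+1}u_0\|_{L^2(\Omega\setminus\Omega_\varepsilon)}$, again controlled by (ii). Testing against $\phi=w_\varepsilon$, coercing, and applying Poincar\'e on $H^m_0$ then yields (\ref{t3.1re2}).

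The main technical obstacle is the justification of (i) and (ii) for the higher-order self-adjoint operator $\mathcal{L}_0$ on the Lipschitz domain $\Omega$; these non-tangential maximal and square function estimates are precisely where the hypothesis $A=A^*$ is used (it is symmetry that secures $L^2$-well-posedness of the constant-coefficient Dirichlet problem in Lipschitz domains), and they are the content of the results cited from \cite{pv}. Once these boundary estimates are granted, the rest is fairly routine bookkeeping combining Lemmas \ref{l2.3} and \ref{l2.4}, interior regularity of the constant-coefficient operator $\mathcal{L}_0$ on $\Omega\setminus\Omega_\varepsilon$, and Poincar\'e on $H^m_0(\Omega)$.
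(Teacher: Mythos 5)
Your treatment of (\ref{t3.1re1}) matches the paper exactly: take $\phi=w_\varepsilon$ in (\ref{l3.1re2}) and use coercivity plus Poincar\'e. For (\ref{t3.1re2}) your overall strategy (test with $\phi=w_\varepsilon$, bound the four terms of (\ref{l3.1re1}) by collar and interior estimates coming from the $L^2$ Dirichlet theory of \cite{pv}) is also the paper's strategy, but there is a genuine gap in how you propose to obtain your inputs (i) and (ii). You assert the non-tangential maximal function bound
$\|(\nabla^m u_0)^*\|_{L^2(\partial\Omega)}\le C(\|\dot g\|_{W\!A^{m,2}(\partial\Omega)}+\|f\|_{W^{-m+1,p_0}(\Omega)})$
and a square-function bound for $\nabla^{m+1}u_0$ directly for the solution $u_0$ of the \emph{inhomogeneous} problem $\mathcal{L}_0 u_0=f$. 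The results of \cite{pv,v96} concern the homogeneous Dirichlet problem $\mathcal{L}_0 v=0$ with Whitney data in $W\!A^{m,2}$; they do not, as stated, absorb a right-hand side $f\in W^{-m+1,p_0}$. The missing step — and the place where the exponent $p_0=2d/(d+1)$ actually does its work — is the decomposition $u_0=v_0+v$, where $v_0$ is the Newtonian-type potential built from the fundamental solution $\Gamma^{0,A}$ of $\mathcal{L}_0$ applied to (an extension of) $f=\sum_{|\zeta|\le m-1}D^\zeta f^\zeta$. One then proves $\|\nabla^{m+1}v_0\|_{L^{p_0}}+\|\nabla^s v_0\|_{L^{q_0}}\le C\|f\|_{W^{-m+1,p_0}}$ by Calder\'on--Zygmund and fractional integration, obtains the collar bound $\|\nabla^s v_0\|_{L^2(\Omega_\varepsilon)}\le C\varepsilon^{1/2}\|f\|_{W^{-m+1,p_0}}$ by a divergence-theorem (Rellich-type) argument with a transversal vector field, and only then applies \cite{pv} to $v=u_0-v_0$, which solves $\mathcal{L}_0 v=0$ with controlled Whitney data. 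Without this splitting, neither (i) nor (ii) is justified, and the mollification estimates of Lemma \ref{l2.4} (which require $L^{p_0}$ control of $\nabla^{m+1}$, supplied only by the $v_0$ piece) have nothing to act on.

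A secondary point: your square-function estimate (ii) is stronger than what is needed and than what the paper uses. For the harmonic part $v$ the paper gets by with the elementary interior estimate $|\nabla^{m+1}v(x)|\le C\,\delta(x)^{-1}(\fint_{B(x,\delta(x)/8)}|\nabla^m v|^2)^{1/2}$, which yields $\varepsilon\|\nabla^{m+1}v\|_{L^2(\Omega\setminus\Omega_{\varepsilon/2})}\le C\varepsilon^{1/2}\|\nabla^m v\|_{L^2(\Omega)}$ — exactly the $\varepsilon^{1/2}$ gain required, without invoking square-function theory for higher-order systems in Lipschitz domains. I would recommend replacing (ii) by this cruder bound and inserting the $u_0=v_0+v$ decomposition; with those two repairs your argument coincides with the paper's.
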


\begin{proof}
 Note that (\ref{t3.1re1}) is a consequence of (\ref{l3.1re2}) by taking  $\phi= w_\varepsilon$.
To prove (\ref{t3.1re2}),
we let $\phi= w_\varepsilon$ in (\ref{l3.1re1}).
 The coercivity condition (\ref{cod2})  implies that
\begin{align}\label{pt311}
\| w_\varepsilon\|_{H^m_0(\Omega)}
&\leq C     \| \nabla^m u_0\|_{L^{2}(\Omega_{2\varepsilon})}
 +  C   \| \nabla^m \tilde{u}_0  -S_\varepsilon(\nabla^m \tilde{u}_0) \|_{L^2(\Omega\setminus \Omega_{2\varepsilon})}\nonumber\\
&\quad + C   \sum_{ 0\leq k \leq m-1,  }\varepsilon^{k } \| S_\varepsilon(\nabla^{m+k} \tilde{u}_0 )\|_{L^2(\Omega_{5\varepsilon}\setminus \Omega_{2\varepsilon})}\nonumber\\
&\quad +  C \sum_{0\leq k\leq m-1}\varepsilon^{m-k } \|  S_\varepsilon(\nabla^{2m-k}  \tilde{u}_0    )\|_{L^2(\Omega\setminus \Omega_{2\varepsilon})}  \nonumber\\
&\doteq J_1+J_2+J_3+J_4.
\end{align}
To bound $J_i, i=1,2,...4 $ by $\|\dot{g}\|_{W\!A^{m,2}(\partial\Omega)}$ and $\|f\|_{W^{-m+1, p_0}(\Omega)}$,
we first note that for a functional $f\in W^{-m+1, p_0}(\Omega; \mathbb{R}^n) $,
there exists an array of functions $f^\zeta\in L^{p_0}(\Omega; \mathbb{R}^n) $ ($\zeta$ is a multi-index) such that
\begin{align*}
f=\sum_{|\zeta|\leq m-1} D^\zeta f^\zeta ~~\text{ and } ~~ \|f\|_{W^{-m+1, p_0}(\Omega)} \approx
 \sum_{|\zeta|\leq m-1} \|f^\zeta\|_{L^{p_0}(\Omega)}.
\end{align*}
Also note that there is a matrix of fundamental solutions $\Gamma^{0, A}(x)$
(with pole at the origin) for the homogenized operator $\mathcal{L}_0$ in $\mathbb{R}^d$ \cite{h,jo,ab},
such that
\begin{equation*}
|D^\eta \Gamma^{0, A}(x)|\leq
\begin{cases} \frac{C_\eta}{|x|^{d-2m+|\eta|}}, \text{ if either } d \text{ is odd, or } d>2m, \text{ or if } |\eta|>2m-d, \vspace{0.3cm}\\
\frac{C_\eta(1+ |\log|x||)}{|x|^{d-2m+|\eta|}}, \text{ if } d \text{ is even},
2\le d\le 2m \text{ and } 0\leq  |\eta|\leq 2m-d,
\end{cases}
\end{equation*}
for any multi-index $\eta.$
Set
\begin{align}\label{pt312}
 v_0(x)=\int_{\mathbb{R}^d} \sum_{|\zeta|\leq m-1} D^\zeta \Gamma^{0, A} (x-y) \widetilde{f}^\zeta(y) \,
 dy  \quad \text{ and } \quad u_0(x)=v_0(x)+v(x),
  \end{align}
  where $ \widetilde{f}^\zeta $ is the extension of $f^\zeta$, being zero outside $\Omega.$
Thanks to the Calder\'{o}n-Zygmund estimates for singular integral
and fractional integral estimates (see e.g. \cite{st} Chapters II, V) we have
\begin{align}
\|\nabla^{m+1} v_0\|_{L^{p_0}(\widehat{\Omega})}\leq C\sum_{|\zeta|\leq m-1}
 \|f^\zeta\|_{L^{p_0}(\Omega)}\leq  C \|f\|_{W^{-m+1, p_0}(\Omega)},   \label{pt313} \\
\|\nabla^sv_0\|_{L^{q_0}(\widehat{\Omega})}\leq C\sum_{|\zeta|\leq m-1} \|f^\zeta\|_{L^{p_0}(\Omega)}
\leq C \|f\|_{W^{-m+1, p_0}(\Omega)},  \label{pt314}
\end{align}
where $\widehat{\Omega} = \{x\in \mathbb{R}^d: dist(x,\Omega)<2\}$, $ \frac{1}{q_0}=1-\frac{1}{p_0}=\frac{2d}{d-1}$ and
 $0\le s\le m$.

Let $\nu$ denote the unit outward normal to $\partial\Omega$.
Let $e=(e_1,...,e_d)\in C_c^\infty( \mathbb{R}^d; \mathbb{R}^d)$ such that $e\cdot \nu\geq c_0>0$ on $\partial\Omega$. Using the divergence theorem, we deduce that for any multi-index $\gamma$ with $0\leq |\gamma|\leq m$,
\begin{align}\label{pt315}
 c_0\int_{\partial\Omega} |D^\gamma v_0|^2 \, d\sigma
 &\leq \int_{\partial\Omega} |D^\gamma v_0|^2 \, \nu\cdot e \, d\sigma\nonumber \\
&\leq \int_{\Omega} | D^{\gamma} v_0 |^2 |\, \text{\rm div}
 (e)| \, dx+ 2 \int_{\Omega} |D_{x_i} D^\gamma  v_0| |e_i| \, | D^\gamma v_0  | \, dx\nonumber \\
&\leq C  \|\nabla^{|\gamma|} v_0\|^2_{L^{p_0}(\Omega)}
+ C\|\nabla^{|\gamma|+1} v_0\|_{L^{p_0}(\Omega)}\|\nabla^{|\gamma|} v_0 \|_{L^{q_0}(\Omega)}\nonumber \\
&\leq    C \|f\|_{W^{-m+1, p_0}(\Omega)},
\end{align}
where we also have used H\"{o}lder's inequality, (\ref{pt313}) and (\ref{pt314}).
Substituting $\partial \Omega$ in  (\ref{pt315})
by $ \Sigma_t=\{x\in \mathbb{R}^d: dist(x,\partial\Omega)=t\}$ for $0<t<<1,$
and integrating the resulting inequality with respect to $t$ from $0$ to $\varepsilon$, we then obtain that
\begin{align}\label{pt316}
\|\nabla^{s}v_0\|_{L^2(\Omega_\varepsilon)}\leq C\varepsilon^{1/2}
 \|f\|_{W^{-m+1, p_0}(\Omega)} ~~\text{for } 0\leq s\leq m.
\end{align}
  Denote $\{Tr(D^\gamma v)\}_{|\gamma|\leq m-1} =\{v_\gamma\}_{|\gamma|\leq m-1}$ as $\dot{v}$. By (\ref{pt315}), we get
\begin{align*}
\|\dot{v}\|_{W\!A^{m,2}(\partial\Omega)}&\leq \|\dot{g}\|_{W\!A^{m,2}(\partial\Omega)}+ \|\dot{v}_0\|_{W\!A^{m,2}(\partial\Omega)}  \\
& \leq \|\dot{g}\|_{W\!A^{m,2}(\partial\Omega)}+ C \|f\|_{W^{-m+1, p_0}(\Omega)}.
\end{align*}

Note that
$$
\mathcal{L}_0 v=\mathcal{L}_0u_0-\mathcal{L}_0v_0=0 \quad \text{ in } \Omega.
$$
Since $A^*=A$, we have $(\bar{A})^*=\overline{A^*}=\bar{A}$.
This allows us to apply the nontangential maximal function estimates for higher-order elliptic systems
with constant coefficients  in Lipschitz domains \cite{pv, v96}
to obtain
\begin{align}\label{pt317}
\|  \mathcal{M}(\nabla^m v)\|_{L^2(\partial \Omega)}
 &\leq C\|\dot{v}\|_{W\!A^{m,2}(\partial\Omega)}\nonumber\\
 &\leq C\|\dot{g}\|_{W\!A^{m,2}(\partial\Omega)}+ C \|f\|_{W^{-m+1, p_0}(\Omega)},
\end{align}
where $\mathcal{M}(\nabla^m v) $ denotes the nontangential maximal function of $\nabla^m v$.
By combining (\ref{pt316}) and (\ref{pt317}), we see that
\begin{align}\label{pt318}
J_1=\|\nabla^mu_0\|_{L^2(\Omega_\varepsilon)} \leq C \varepsilon^{1/2}
 \left\{  \|\dot{g}\|_{W\!A^{m,2}(\partial\Omega)}+   \|f\|_{W^{-m+1, p_0}(\Omega)}\right\}.
\end{align}

Now let us turn to $J_2.$
Let $\widetilde{\rho}_\varepsilon$ be a function in $C_c^\infty(\Omega)$
such that supp$(\widetilde{\rho}_\varepsilon)\subset \Omega\setminus \Omega_{\varepsilon/2}$ and
\begin{align*}
 0\leq\widetilde{\rho}_\varepsilon\leq 1, ~ |\nabla \widetilde{\rho}_\varepsilon|
 \leq C \varepsilon^{-1},~\widetilde{\rho}_\varepsilon=1  \text{ on } \Omega\setminus \Omega_{\varepsilon}.
\end{align*}
Let $\widehat{\rho}_1$ be a function in $C_c^\infty(\widehat{\Omega})$
such that
\begin{align*}
 0\leq \widehat{\rho}_1\leq 1, ~\widehat{\rho}_1(x)=1  \text{ in } \Omega, ~ |\nabla^k  \widehat{\rho}_1|
 \leq C ,~\text{ for } 1\leq k \leq 2m.
\end{align*}
It follows that
\begin{align}\label{pt319}
J_2&\leq \| \nabla^mv_0  -S_\varepsilon(\nabla^m v_0 )  \|_{L^2(\Omega\setminus\Omega_{2\varepsilon} )}+\| \nabla^mv  -S_\varepsilon(\nabla^m v ) \|_{L^2(\Omega \setminus\Omega_{2\varepsilon})} \nonumber\\
&\leq \| \nabla^mv_0\widehat{\rho}_1-S_\varepsilon(\nabla^m v_0\widehat{\rho}_1)\|_{L^2(\mathbb{R}^d)}
+\|  \nabla^m v \widetilde{\rho}_\varepsilon -S_\varepsilon(  \nabla^m v \widetilde{\rho}_\varepsilon )
  \|_{L^2(\mathbb{R}^d)}\nonumber\\
& \leq       C\varepsilon^{1/2}\|\nabla(\nabla^mv_0\widehat{\rho}_1)\|_{L^{p_0}(\mathbb{R}^d)}
+C  \varepsilon \|\nabla( \nabla^m v\widetilde{\rho}_\varepsilon ) \|_{L^2(\mathbb{R}^d)}\nonumber\\
& \leq       C\varepsilon^{1/2}\left\{\|\nabla^{m+1}v_0\|_{L^{p_0}(\widehat{\Omega})} +\|\nabla^{m}v_0\|_{L^{p_0}(\widehat{\Omega})}\right\} + C  \varepsilon\|\nabla^{m+1} v \|_{L^2(\Omega \setminus\Omega_{\varepsilon/2})} + C\| \nabla^m v\|_{L^2(\Omega_{\varepsilon})}\nonumber\\
&\leq C \varepsilon^{1/2} \left\{  \|\dot{g}\|_{W\!A^{m,2}(\partial\Omega)}+  \|f\|_{W^{-m+1, p_0}(\Omega)}\right\}+C  \varepsilon \|\nabla^{m+1} v \|_{L^2(\Omega \setminus\Omega_{\varepsilon/2})},
\end{align}
where we have used (\ref{l2.4re4}), (\ref{l2.4re1}) for the third inequality
as well as (\ref{pt313}), (\ref{pt314}) and (\ref{pt317}) for the last inequality.

For $J_3$ and $J_4$, we observe that by Lemma \ref{l2.3},
\begin{align}\label{pt320}
J_3\leq  C   \|  \nabla^{m  }  {u}_0 \|_{L^2(\Omega_{5\varepsilon})}
\leq C \varepsilon^{1/2} \left\{  \|\dot{g}\|_{W\!A^{m,2}(\partial\Omega)}+ \|f\|_{W^{-m+1, p_0}(\Omega)}\right\}.
\end{align}
Similar to the deduction of (\ref{pt319}),  we use (\ref{l2.4re2}), Lemma \ref{l2.3} as well as (\ref{pt313}) and (\ref{pt314}) to deduce that
\begin{align}\label{pt321}
J_4
&\leq C  \sum_{0\leq k\leq m-1}\varepsilon^{m-k }   \|  S_\varepsilon(\nabla^{2m-k} (v_0 \widehat{\rho}_1)   )\|_{L^2(\Omega\setminus \Omega_{2\varepsilon})} + C  \sum_{0\leq k\leq m-1}\varepsilon^{m-k }   \|
S_\varepsilon(\nabla^{2m-k}  v    )\|_{L^2(\Omega\setminus \Omega_{2\varepsilon})} \nonumber\\
  &\leq C \varepsilon^{1/2}  \| \nabla^{m+1}  (v_0 \widehat{\rho}_1) \|_{L^{p_0}(\widehat{\Omega})}
  + C\varepsilon \|   \nabla^{m+1}  v \|_{L^2(\Omega\setminus \Omega_{\varepsilon})}\nonumber\\
  &\leq C \varepsilon^{1/2}\|f\|_{W^{-m+1, p_0}(\Omega)}+ C\varepsilon \|   \nabla^{m+1}  v \|_{L^2(\Omega\setminus \Omega_{\varepsilon})}.
\end{align}
By combining the estimates for $J_1, J_2,J_3,J_4$ and (\ref{pt311}), we obtain that
 \begin{align}\label{pt322}
\| w_\varepsilon\|_{H^m_0(\Omega)}\leq C \varepsilon^{1/2} \left\{
\|\dot{g}\|_{W\!A^{m,2}(\partial\Omega)}+ \|f\|_{W^{-m+1, p_0}(\Omega)}\right\}
+C  \varepsilon \|\nabla^{m+1} v \|_{L^2(\Omega \setminus\Omega_{\varepsilon/2})}.
 \end{align}
Thus, to prove (\ref{t3.1re2}),  it remains only to bound $ \|\nabla^{m+1} v\|_{L^2(\Omega\setminus \Omega_{\varepsilon})} $.
Recall that $ \mathcal{L}_0 v=\mathcal{L}_0 (u_0-v_0)=0$ in $\Omega$.
By the interior estimates for elliptic systems with constant coefficients,
\begin{align*}
|\nabla^{m+1} v(x)|\leq \frac{C}{\delta(x)} \left\{ \fint_{B(x, \frac{\delta(x)}{8})} |\nabla^{m} v |^2\right\}^{1/2},
\end{align*}
where $\delta(x)=\text{dist} (x, \partial\Omega)$.
This leads to
\begin{align*}
\|\nabla^{m+1} v \|_{L^2(\Omega\setminus \Omega_{\varepsilon/2})}
&\leq   C   \left\{\int_{\Omega\setminus \Omega_{\varepsilon/2}}\frac{1}{(\delta(x))^2}
\fint_{B(x, \frac{\delta(x)}{8})} |\nabla^{m} v(y)|^2dy dx\right\}^{1/2}\nonumber\\
&\le C\varepsilon^{-1/2} \| \nabla^m v\|_{L^2(\Omega)}\nonumber\\
& \leq C \varepsilon^{-1/2} \left\{  \|\dot{g}\|_{W\!A^{m,2}(\partial\Omega)}+  \|f\|_{W^{-m+1, p_0}(\Omega)}\right\},
\end{align*}
which, combined with (\ref{pt322}), implies (\ref{t3.1re2}). The proof is completed.
\end{proof}

With Theorem \ref{t3.1} at our disposal, we are now ready to prove Theorem \ref{tcon}.

\begin{proof}[\bf Proof of Theorem \ref{tcon}]
In view of (\ref{w}) it is enough to prove that
\begin{align}
\|\varepsilon^m\sum_{|\gamma|=m}\chi^\gamma(\frac{x}{\varepsilon}) S^2_\varepsilon(D^\gamma \tilde{u}_0)\rho_\varepsilon\|_{W_0^{m-1, q_0} (\Omega)} \leq C\varepsilon\|u_0\|_{H^{m+1}(\Omega)},\label{ptc001}\\
\|w_\varepsilon\|_{W_0^{m-1, q_0}  (\Omega)}\leq C \varepsilon\|u_0\|_{H^{m+1}(\Omega)}.\label{ptc002}
\end{align}
Note that
\begin{align*}
 &\varepsilon^m\|\sum_{|\gamma|=m}\chi^\gamma(\frac{x}{\varepsilon}) S^2_\varepsilon(D^\gamma \tilde{u}_0)\rho_\varepsilon\|_{W_0^{m-1, q_0} (\Omega)}\\
&\le C  \varepsilon^{m} \sum_{|\gamma|=m}
 \sum_{|\eta_1+\eta_2+\eta_3|=m-1}
 \varepsilon^{-|\eta_1|}
 \| ( D^{\eta_1}\chi^\gamma)(\frac{x}{\varepsilon}) S^2_\varepsilon(D^{\eta_2}D^\gamma \tilde{u}_0)D^{\eta_3}\rho_\varepsilon\|_{L^{q_0} (\Omega)}\\
&\leq C \sum_{0\leq k\leq m-1}\varepsilon^{1+k}  \| S_\varepsilon(\nabla^{m+k} \tilde{u}_0)\|_{L^{q_0}(\Omega)},
\end{align*}
where we have used Lemma \ref{l2.2} and the definition of $\rho_\varepsilon$  for the last inequality.
Using Sobolev imbeddings and Lemma \ref{l2.3}, we obtain that
\begin{align*}
 \varepsilon^m\|\sum_{|\gamma|=m}\chi^\gamma(\frac{x}{\varepsilon}) S^2_\varepsilon(D^\gamma \tilde{u}_0)\rho_\varepsilon\|_{W_0^{m-1, q_0} (\Omega)}
&\leq
C \sum_{0\leq k\leq m-1}\varepsilon^{1+k}  \| S_\varepsilon(\nabla^{m+k+1} \tilde{u}_0)\|_{L^2(\mathbb{R}^d)},\\
& \leq C\varepsilon \|  u_0 \|_{H^{m+1}(\Omega)},
\end{align*}
which gives (\ref{ptc001}).

 Next we turn to (\ref{ptc002}).   For any fixed $F\in W^{-m+1, p_0}(\Omega;\mathbb{R}^n)$,
  let $\psi_\varepsilon\in H^m_0(\Omega;\mathbb{R}^n)$  be the weak solution to the Dirichlet problem
\begin{equation*}
 \begin{cases}
 \mathcal{L}_\varepsilon \psi_\varepsilon =F  &\text{ in } \Omega,  \vspace{0.3cm}\\
 Tr (D^\gamma \psi_\varepsilon)=0,  & \text{ on } \partial\Omega \ \ \text{ for  } 0\leq|\gamma|\leq m-1,
\end{cases}
\end{equation*}
and  $\psi_0\in H^m_0(\Omega;\mathbb{R}^n)$ the solution to the homogenized problem
 \begin{equation*}
 \begin{cases}
 \mathcal{L}_0 \psi_0 =F  &\text{ in } \Omega,  \vspace{0.3cm}\\
 Tr (D^\gamma \psi_0)=0  & \text{ on } \partial\Omega \ \  \text{ for  } 0\leq|\gamma|\leq m-1.
\end{cases}
\end{equation*}
Set $$ \Psi_\varepsilon=\psi_\varepsilon-\psi_0-\varepsilon^m\sum_{|\gamma|=m}\chi^\gamma(\frac{x}{\varepsilon}) S^2_\varepsilon(D^\gamma \tilde{\psi}_0 )\rho_\varepsilon .$$
Since $w_\varepsilon\in H^m_0(\Omega;\mathbb{R}^n)$,  we  deduce that
\begin{align}\label{ptc003}
&  \langle w_\varepsilon, F \rangle_{W_0^{m-1, q_0}(\Omega)\times W^{-m+1, p_0}(\Omega) } =   \sum_{|\alpha|=|\beta|=m}\int_{\Omega} D^\alpha w_\varepsilon A^{\alpha \beta}(\frac{x}{\varepsilon})D^\beta  \psi_\varepsilon  \nonumber\\
 &=   \sum_{|\alpha|=|\beta|=m} \int_{\Omega} D^\alpha w_\varepsilon A^{\alpha \beta}(\frac{x}{\varepsilon})
 D^\beta  \Psi_\varepsilon
 + \sum_{|\alpha|=|\beta|=m}\int_{\Omega} D^\alpha w_\varepsilon
 A^{\alpha \beta}(\frac{x}{\varepsilon})D^\beta\psi_0  \nonumber\\
 &\quad + \sum_{|\alpha|=|\beta|=m}\int_{\Omega}
 D^\alpha w_\varepsilon A^{\alpha \beta}(\frac{x}{\varepsilon})D^\beta
 \left\{\sum_{|\gamma|=m}\varepsilon^m\chi^\gamma(\frac{x}{\varepsilon}) S^2_\varepsilon(D^\gamma \tilde{\psi}_0)\rho_\varepsilon  \right\} \nonumber \\
 &\doteq K_1+K_2+K_3.
\end{align}
By (\ref{t3.1re1}) and (\ref{t3.1re2}), it is easy to see that
\begin{align*}
|K_1|\leq C\| w_\varepsilon\|_{H^m_0(\Omega)}\| \Psi_\varepsilon\|_{H^m_0(\Omega)}\leq C\varepsilon  \|u_0 \|_{H^{m+1}(\Omega)}
   \|F\|_{W^{-m+1, p_0}(\Omega)}.
\end{align*}
Also note that $  \psi_0\in H^m_0(\Omega;\mathbb{R}^n),$ and
 $$
 \aligned
& \|\nabla^m \psi_0 \|_{L^2(\Omega)} \leq C \|F\|_{H^{-m}(\Omega)}\leq C \|F\|_{W^{-m+1, p_0}(\Omega)}, \\
& \| \nabla^m \psi_0 \|_{L^2(\Omega_\varepsilon)}
\le C \, \varepsilon^{1/2} \| F\|_{W^{-m+1,p_0}(\Omega)},
\endaligned
 $$
 where the last inequality was established in the proof of Theorem \ref{t3.1}.
 Hence,  by   (\ref{l3.1re2}),
\begin{align*}
|K_2|
  &\leq C \|u_0\|_{H^{m+1}(\Omega)}\Big\{\varepsilon \|\nabla^m \psi_0 \|_{L^2(\Omega)}+ \varepsilon^{1/2} \|\nabla^m \psi_0\|_{L^2(\Omega_\varepsilon)}  \Big\}\nonumber\\
&\leq C \varepsilon \|u_0\|_{H^{m+1}(\Omega)}\|F\|_{W^{-m+1, p_0}(\Omega)}.
\end{align*}
Also, by (\ref{l3.1re2}),
 \begin{align}\label{ptc004}
  |K_3 |&\leq C \varepsilon\|u_0\|_{H^{m+1} (\Omega)}
  \| \varepsilon^m\nabla^m\Big\{ \chi(\frac{x}{\varepsilon})
  S^2_\varepsilon(\nabla^m \tilde{\psi}_0 )\rho_\varepsilon\Big\}\|_{L^2(\Omega)} \nonumber\\
   &+ C \varepsilon^{1/2} \|u_0\|_{H^{m+1}(\Omega)}\| \varepsilon^m\nabla^m \Big\{ \chi(\frac{x}{\varepsilon}) S^2_\varepsilon(\nabla^m \tilde{\psi}_0   )\rho_\varepsilon\Big\}\|_{L^2(\Omega_{4\varepsilon}
   \setminus \Omega_{3\varepsilon})}.
\end{align}
Observe  that
\begin{align}\label{ptc005}
&\varepsilon^m\|\nabla^m\Big\{ \chi^\gamma(\frac{x}{\varepsilon})
S^2_\varepsilon(\nabla^m \tilde{\psi}_0)\rho_\varepsilon\Big\}\|_{L^2(\Omega)}\nonumber\\
&\leq C \|\nabla^m  \chi^\gamma(\frac{x}{\varepsilon}) S^2_\varepsilon(\nabla^m \tilde{\psi}_0)\rho_\varepsilon\|_{L^2(\Omega)}
 \nonumber\\
 &\ \ +C \varepsilon^m \|\chi^\gamma(\frac{x}{\varepsilon}) S^2_\varepsilon(\nabla^{2m} \tilde{\psi}_0)\rho_\varepsilon\|_{L^2(\Omega)}
+ C\varepsilon^m \|\chi^\gamma(\frac{x}{\varepsilon}) S^2_\varepsilon(\nabla^{m} \tilde{\psi}_0)\nabla^{m}\rho_\varepsilon\|_{L^2(\Omega)}\nonumber\\
 &\ \ +C\sum_{k_1+k_2+k_3=m, k_i\geq1,i=1,2,3}
 \varepsilon^{k_2+k_3}
 \|(\nabla^{k_1}\chi^\gamma)(\frac{x}{\varepsilon}) S^2_\varepsilon(\nabla^{k_2+m} \tilde{\psi}_0 )\nabla^{k_3}\rho_\varepsilon\|_{L^2(\Omega)}\nonumber\\
 &\doteq K_{31}+K_{32}+K_{33} +K_{34}.
\end{align}
For $ K_{31}, K_{33}$,  we deduce from Lemma \ref{l2.2} that
 \begin{align*}
K_{31}\leq C  \|  S_\varepsilon(\nabla^m \tilde{\psi}_0) \|_{L^2(\Omega\setminus \Omega_{2\varepsilon}))}
\leq C\|   \nabla^m {\psi}_0 \|_{L^2(\Omega)} \leq C \|F\|_{W^{-m+1, p_0}(\Omega)},\\
K_{33}\leq C \|  S_\varepsilon(\nabla^m \tilde{\psi}_0 )\|_{L^2(\Omega_{4\varepsilon}\setminus \Omega_{3\varepsilon})}
\leq  C \|   \nabla^m \tilde{\psi}_0 \|_{L^2(\Omega)}   \leq C \|F\|_{W^{-m+1, p_0}(\Omega)}.
\end{align*}
Furthermore, by Lemmas  \ref{l2.2} and  \ref{l2.3}, we see that
 \begin{align*}
K_{32}&\leq C  \varepsilon^m\| \chi^\gamma(\frac{x}{\varepsilon}) S^2_\varepsilon(\nabla^{2m} \tilde{\psi}_0) \|_{L^2(\Omega\setminus \Omega_{3\varepsilon}))}
\leq C  \varepsilon^m \|   S_\varepsilon(\nabla^{2m} \tilde{\psi}_0) \|_{L^2(\Omega)}\\&\leq  C \|F\|_{W^{-m+1, p_0}(\Omega)},
 \end{align*}
 and also
 \begin{align*}
K_{34} &= C\sum_{k_1+k_2+k_3=m, k_i\geq1,i=1,2,3}\varepsilon^{k_2+k_3}
 \| (\nabla^{k_1}\chi^\gamma)(\frac{x}{\varepsilon})  S^2_\varepsilon(\nabla^{k_2+m}
 \tilde{\psi}_0) \nabla^{k_3}\rho_\varepsilon\|_{L^2(\Omega_{4\varepsilon}\setminus\Omega_{2\varepsilon})}\\
& \leq C\sum_{1\leq k_2\leq m-2} \varepsilon^{k_2 }\|  S_\varepsilon(\nabla^{k_2+m}\tilde{\psi}_0)\|_{L^2(\Omega_{4\varepsilon}\setminus\Omega_{\varepsilon})}\\
&\leq C \|  \nabla^{m} \psi_0 \|_{L^2(\Omega_{4\varepsilon})}\leq C \|F\|_{W^{-m+1, p_0}(\Omega)}.
 \end{align*}
By combining the estimates on $K_{31}, K_{32}, K_{33}, K_{34}$ with (\ref{ptc005}), we obtain
 \begin{align}\label{ptc006}
\varepsilon^m\|\nabla^m\Big\{ \chi^\gamma(\frac{x}{\varepsilon}) S^2_\varepsilon(\nabla^m \tilde{\psi}_0)\rho_\varepsilon\Big\}\|_{L^2(\Omega)} \leq C   \|F\|_{W^{-m+1, p_0}(\Omega)}.
 \end{align}
Similar consideration also shows that
\begin{align}\label{ptc007}
\varepsilon^m\|\nabla^m \Big\{ \chi^\gamma(\frac{x}{\varepsilon}) S^2_\varepsilon(\nabla^m \tilde{\psi}_0   )\rho_\varepsilon\Big\}\|_{L^2(\Omega_{4\varepsilon}\setminus \Omega_{3\varepsilon})}
\leq C \varepsilon^{1/2}  \|F\|_{W^{-m+1, p_0}(\Omega)}.
\end{align}
By combining (\ref{ptc006}) and (\ref{ptc007}) with (\ref{ptc004}), we obtain that
\begin{align*}
 |K_3|\leq C \varepsilon\|u_0\|_{H^{m+1}(\Omega)}   \|F\|_{W^{-m+1, p_0}(\Omega)} .
 \end{align*}
Finally, in view of the estimates of $K_1, K_2, K_3$ and (\ref{ptc003}), we have proved that
 \begin{align*}
 \Big| \langle w_\varepsilon, F \rangle_{W_0^{m-1, q_0}(\Omega)\times W^{-m+1, p_0}(\Omega) } \big|
 \leq C \varepsilon\|u_0\|_{H^{m+1}(\Omega)}  \|F\|_{W^{-m+1, p_0}(\Omega)},
  \end{align*}
  which, by duality,  gives the desired estimate (\ref{ptc002}).
  This completes the proof of Theorem \ref{tcon}.
  \end{proof}

\begin{remark}\label{rsymm}
The symmetric assumption on $A$ is made to ensure the nontangential
maximal function estimates (\ref{pt317}) in Lipschitz domains.
If $\Omega$ is smooth, this assumption can be removed
without changing the results in (\ref{t3.1re1}) and (\ref{t3.1re2}), see e.g. \cite{pv,v96}.
Especially,  we still have the following estimate as a consequence of (\ref{t3.1re2}),
\begin{align}\label{rsc}
 \| u_\varepsilon-u_0\|_{L^2(\Omega)} \leq C \varepsilon^{1/2}  \left\{\|\dot{g}\|_{WA^{m,2}(\partial\Omega)}+\|f\|_{W^{-m+1, p_0}(\Omega)}\right\},
\end{align}
which will play an essential role in  the proof of the large-scale $C^{m-1,1}$ estimate in the next section, see Lemma \ref{lemm4.1}.
\end{remark}


\section{$C^{m-1, 1}$ estimates}

This section is devoted to the interior $C^{m-1, 1}$ estimates for $\mathcal{L}_\varepsilon$,
without smoothness and symmetry assumptions on the coefficients.
 The proof is based on a general scheme for establishing
 regularity estimates at large scale in the homogenization theory, formulated in \cite{ak} and further developed in \cite{as, sh1}.

In the following we will always assume that the matrix $ A$ satisfies (\ref{cod1})--(\ref{cod3}).

\begin{lemma}\label{lemm4.1}
For $0<\varepsilon\le r<\infty$, let $B_r=B(x_0,r)$ be a ball in $\mathbb{R}^d$
and  $u_\varepsilon\in H^m(B_{2r}; \mathbb{R}^n)$  a solution to $\mathcal{L}_\varepsilon u_\varepsilon=\sum_{|\alpha|\le m-1} D^\alpha f^\alpha$ in $B_{2r}$.
Then there exists a function $u_0\in H^m(B_{r}; \mathbb{R}^n)$ such that $\mathcal{L}_0 u_0=\sum_{|\alpha|\le m-1} D^\alpha f^\alpha $ in $B_r$ and
\begin{align}\label{l4.1re1}
\left(\fint_{B_r} |u_\varepsilon-u_0|^2\right)^{1/2} \leq C \left(\frac{\varepsilon}{r}\right)^{1/2}     \left\{ \left(\fint_{B_{2r}} |u_\varepsilon|^2\right)^{1/2} +
\sum_{|\alpha|\le m-1} r^{2m-|\alpha|} \left(\fint_{B_{2r}} |f^\alpha |^2\right)^{1/2}  \right\}.
\end{align}
\end{lemma}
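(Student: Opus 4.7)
The plan is to reduce to unit scale by dilation and then invoke the $L^2$ convergence rate from Theorem \ref{t3.1}, using Remark \ref{rsymm} to bypass the symmetry assumption on the smooth domain $B_1$. Set $\tilde\varepsilon=\varepsilon/r\in(0,1]$, let $A'(z)=A((x_0/\varepsilon)+z)$ (still periodic and satisfying (\ref{cod1})--(\ref{cod3}) with $\overline{A'}=\bar A$), and define $v_\varepsilon(y)=u_\varepsilon(x_0+ry)$ for $y\in B_2$. A direct calculation gives
\begin{equation*}
\mathcal{L}_{\tilde\varepsilon}^{A'} v_\varepsilon = \sum_{|\alpha|\le m-1} D^\alpha F^\alpha \ \text{ in } B_2, \qquad F^\alpha(y) = r^{2m-|\alpha|} f^\alpha(x_0+ry).
\end{equation*}

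Next, let $v_0\in H^m(B_1;\mathbb{R}^n)$ be the unique weak solution of $\mathcal{L}_0 v_0 = \sum_{|\alpha|\le m-1} D^\alpha F^\alpha$ in $B_1$ with trace data $\{D^\gamma v_\varepsilon|_{\partial B_1}\}_{|\gamma|\le m-1}$. Since $B_1$ is smooth, Remark \ref{rsymm} (estimate (\ref{rsc})) applies \emph{without} the symmetry hypothesis and yields
\begin{equation*}
\|v_\varepsilon-v_0\|_{L^2(B_1)} \le C\tilde\varepsilon^{1/2}\Bigl\{\|\dot v_\varepsilon\|_{W\!A^{m,2}(\partial B_1)}+\bigl\|\textstyle\sum_{|\alpha|\le m-1} D^\alpha F^\alpha\bigr\|_{W^{-m+1,p_0}(B_1)}\Bigr\}.
\end{equation*}
The boundary term is handled by the trace theorem, $\|\dot v_\varepsilon\|_{W\!A^{m,2}(\partial B_1)}\le C\|v_\varepsilon\|_{H^m(B_1)}$, and $\|v_\varepsilon\|_{H^m(B_1)}$ is bounded by the Caccioppoli inequality (Lemma \ref{l2.6}, which uses only the ellipticity constants and so applies to $\mathcal{L}_{\tilde\varepsilon}^{A'}$):
\begin{equation*}
\|v_\varepsilon\|_{H^m(B_1)}^2 \le C\|v_\varepsilon\|_{L^2(B_2)}^2 + C\sum_{|\alpha|\le m-1}\|F^\alpha\|_{L^2(B_2)}^2.
\end{equation*}
The forcing norm satisfies $\|\sum D^\alpha F^\alpha\|_{W^{-m+1,p_0}(B_1)}\le C\sum\|F^\alpha\|_{L^{p_0}(B_1)}\le C\sum\|F^\alpha\|_{L^2(B_1)}$, since $p_0=2d/(d+1)\le 2$ and $B_1$ is bounded.

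Finally, setting $u_0(x)=v_0((x-x_0)/r)$, the translation invariance of $\bar A$ gives $\mathcal{L}_0 u_0 = \sum D^\alpha f^\alpha$ in $B_r$; undoing the change of variables in the inequalities above converts each $L^2$ norm on $B_j$ into $|B_{jr}|^{1/2}$ times an averaged $L^2$ norm on $B_{jr}$ and attaches the factor $r^{2m-|\alpha|}$ to each $f^\alpha$ term, producing exactly the inequality (\ref{l4.1re1}). The key technical point is the use of Remark \ref{rsymm}: Theorem \ref{t3.1} itself is not directly applicable because $A$ is not assumed symmetric here, and it is precisely the smoothness of $\partial B_1$ (preserved under dilation) that makes the non-tangential maximal function estimates for the constant-coefficient operator $\mathcal{L}_0$ available regardless of symmetry.
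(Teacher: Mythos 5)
Your overall strategy is the same as the paper's: rescale to unit scale, let the comparison function solve the homogenized Dirichlet problem with boundary data taken from $u_\varepsilon$, apply the $O(\varepsilon^{1/2})$ rate from Theorem \ref{t3.1} via Remark \ref{rsymm} (which is indeed the right way to dispense with symmetry on a smooth ball), and control the data by Caccioppoli. However, there is a genuine gap at the step where you write $\|\dot v_\varepsilon\|_{W\!A^{m,2}(\partial B_1)}\le C\|v_\varepsilon\|_{H^m(B_1)}$ ``by the trace theorem.'' The $W\!A^{m,2}$ norm contains $\sum_{|\alpha|=m-1}\|\nabla_{tan}\,g_\alpha\|_{L^2(\partial B_1)}$ with $g_\alpha=D^\alpha v_\varepsilon|_{\partial B_1}$, i.e.\ it requires an $L^2(\partial B_1)$ bound on (tangential parts of) the \emph{$m$-th order} derivatives of $v_\varepsilon$. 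For a general $v_\varepsilon\in H^m(B_1)$ one has $\nabla^m v_\varepsilon\in L^2(B_1)$ only, and an $L^2$ function has no boundary trace; the trace theorem gives $\nabla^{m-1}v_\varepsilon|_{\partial B_1}\in H^{1/2}(\partial B_1)$, hence $\nabla_{tan}(D^\alpha v_\varepsilon)\in H^{-1/2}(\partial B_1)$, not $L^2$. (Already for $m=1$ this is the familiar fact that the trace of an $H^1$ function lies in $H^{1/2}(\partial\Omega)$, not $H^1(\partial\Omega)$.) As written, your homogenized problem is not even known to be solvable, since its boundary data has not been shown to lie in $W\!A^{m,2}(\partial B_1)$.

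The paper's fix is the piece you are missing: instead of posing the homogenized problem on $B_1$ itself, it poses it on $B_t$ for a radius $t\in[5/4,7/4]$ chosen by a Fubini/co-area pigeonhole argument. Since Caccioppoli gives
\begin{equation*}
\sum_{0\le k\le m}\int_{B_{7/4}}|\nabla^k u_\varepsilon|^2
\le C\Big\{\int_{B_2}|u_\varepsilon|^2+\sum_{|\alpha|\le m-1}\int_{B_2}|f^\alpha|^2\Big\},
\end{equation*}
there must exist some $t\in[5/4,7/4]$ with $\sum_{0\le k\le m}\int_{\partial B_t}|\nabla^k u_\varepsilon|^2$ bounded by the same right-hand side; on that sphere the full $W\!A^{m,2}$ norm of the boundary data (including the $m$-th order tangential derivatives) is finite and controlled, and the convergence-rate estimate of Remark \ref{rsymm} can be applied on $B_t\supset B_1$. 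If you insert this radius-selection step, the rest of your argument (the bound on the forcing in $W^{-m+1,p_0}$ by $L^2$ norms, and the final rescaling bookkeeping) goes through as you describe.
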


\begin{proof}
We may assume that $r=1$ and $x_0=0$ by rescaling and translation.
Let $u_0$ be the weak solution to the Dirichlet problem
\begin{equation}\label{pl4101}
\begin{cases}
\mathcal{L}_0 u_0=\sum_{|\alpha|\le m-1} D^\alpha f^\alpha &\text{ in } B_t,    \\
 Tr (D^\gamma u_0)=D^\gamma u_\varepsilon  & \text{ on } \partial B_t \ \  \text{ for  } 0\leq|\gamma|\leq m-1,
 \end{cases}
\end{equation}
where $t\in [5/4, 7/4]$ is to be determined later.
Thanks to Remark \ref{rsymm}, we have
\begin{align}\label{pl4102}
\|u_\varepsilon-u_0\|_{L^2(B_1)}&\leq   \|u_\varepsilon-u_0\|_{L^2(B_t)}\nonumber\\
 &\leq C \varepsilon^{1/2}\left\{ \sum_{ 0\leq k\leq m}   \| \nabla^k u_\varepsilon \|_{L^2(\partial B_t)} + \sum_{|\alpha|\le m-1}
 \|f^\alpha \|_{L^2(B_t)} \right \}.
 \end{align}
By Caccioppoli's inequality (see Lemma \ref{l2.6}), we obtain that
\begin{align}\label{pl4103}
 \sum_{0\leq k\leq m}\int_{B_\frac{7}{4}}
 | \nabla^k u_\varepsilon|^2 \leq C \left\{\int_{B_2}  |u_\varepsilon|^2 +\sum_{|\alpha|\le m-1} \int_{B_2}  |f^\alpha |^2\right\}.
\end{align}
Hence there must be some $t\in [5/4, 7/4]$ such that
\begin{align}\label{pl4104}
 \sum_{0\leq k\leq m}\int_{\partial B_t} | \nabla^k u_\varepsilon|^2
 \leq C \left\{ \int_{B_2}  |u_\varepsilon|^2 + \sum_{|\alpha| \le m-1}
 \int_{B_2}  |f^\alpha |^2\right\}.
\end{align}
For  otherwise, we may deduce from the co-area formula that
 \begin{align*}
 \sum_{0\leq k\leq m}\int_{B_\frac{7}{4}} | \nabla^k u_\varepsilon|^2
 \ge C \sum_{0\leq k\leq m}\int_\frac{5}{4}^\frac{7}{4}\int_{\partial B_t}
  | \nabla^k u_\varepsilon|^2dt >  C \left\{\int_{B_2}  |u_\varepsilon|^2 + \sum_{|\alpha|\le m-1}
  \int_{B_2}  |f^\alpha |^2\right\},
\end{align*}
which contradicts with (\ref{pl4103}). It follows by (\ref{pl4104}) and (\ref{pl4102}) that
\begin{align}\label{pl4105}
\|u_\varepsilon-u_0\|_{L^2(B_1)}
 \leq C \varepsilon^{1/2}\Big\{   \|u_\varepsilon\|_{L^2(B_2)} + \sum_{|\alpha|\le m-1}
 \|f^\alpha \|_{L^2(B_2)} \Big\}.
 \end{align}
 This completes the proof.
\end{proof}

Let
$$\mathfrak{P }_k=\Big\{(P^1_k, P^2_k,..., P^n_k)\mid  P_k^i \text{ are polynomials of degree at most  } k  \Big\}.
$$

\begin{lemma}\label{lemm4.2}
 Let $B_r=B(x_0,r)$ be a ball in $\mathbb{R}^d$.
 Let $u_0\in H^m(B_{r}; \mathbb{R}^n)$ be a solution to $\mathcal{L}_0 u_0=\sum_{|\alpha |\le m-1} D^\alpha f^\alpha $ in $B_r$ with $f^\alpha \in L^q(B_r; \mathbb{R}^n)$,
 where $q>d$.
 For $0< t \leq r,$ define
\begin{align*}
G(t;u_0)=\frac{1}{t^m} \inf_{P_m\in\mathfrak{P}_{m}} \left\{ \left(\fint_{B_t} |u_0-P_m|^2\right)^{1/2} +
\sum_{|\alpha|\le m-1} t^{2m-|\alpha|} \left(\fint_{B_t} |f^\alpha |^q\right)^{1/q}  \right\}.
\end{align*}
Then there exists some $\delta\in (0, 1/8)$, depending only on $d$, $n$, $m$, $q$ and $\mu$ such that
\begin{align}\label{l42re1}
G(\delta r;u_0)\leq \frac{1}{2} G(r;u_0).
\end{align}
\end{lemma}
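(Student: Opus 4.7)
The plan is to reduce to the interior $C^{m,\sigma}$ regularity of solutions of the constant-coefficient system $\mathcal{L}_0 v=\sum_{|\alpha|\le m-1}D^\alpha f^\alpha$, with $\sigma:=1-d/q>0$, and then obtain the decay $G(\delta r;u_0)\le C_0\delta^\sigma G(r;u_0)$ by subtracting off a Taylor polynomial of degree $m$. Choosing $\delta$ small then yields (\ref{l42re1}).

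First I would reduce to a convenient normalization. Let $P_m^{(r)}\in\mathfrak{P}_m$ realize the infimum in the definition of $G(r;u_0)$ and set $\widetilde{u}_0:=u_0-P_m^{(r)}$. Since $\mathcal{L}_0$ has constant coefficients and $P_m^{(r)}$ has degree $\le m<2m$, $\mathcal{L}_0 P_m^{(r)}=0$, so $\widetilde{u}_0$ satisfies the same equation in $B_r$. For any $P\in\mathfrak{P}_m$, $P_m^{(r)}+P\in\mathfrak{P}_m$ and $u_0-(P_m^{(r)}+P)=\widetilde{u}_0-P$, so it suffices to find $P$ making $(\delta r)^{-m}(\fint_{B_{\delta r}}|\widetilde{u}_0-P|^2)^{1/2}$ small, using the identity
\begin{align*}
G(r;u_0)=r^{-m}\Bigg\{\Big(\fint_{B_r}|\widetilde{u}_0|^2\Big)^{1/2}+\sum_{|\alpha|\le m-1}r^{2m-|\alpha|}\Big(\fint_{B_r}|f^\alpha|^q\Big)^{1/q}\Bigg\}.
\end{align*}

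Next I would invoke interior $C^{m,\sigma}$ regularity for $\mathcal{L}_0$. The source $\sum_{|\alpha|\le m-1}D^\alpha f^\alpha$ lies in $W^{-m+1,q}(B_r)$, so standard interior $W^{m+1,q}$ estimates for constant-coefficient elliptic systems of order $2m$ (obtained from the fundamental solution $\Gamma^{0,A}$ and Calder\'on--Zygmund theory, the CZ case $|\alpha|+|\beta|=2m$ being the worst kernel $|x|^{-d}$) give $\widetilde{u}_0\in W^{m+1,q}_{\mathrm{loc}}(B_r)$, and Morrey's embedding (since $q>d$) yields $\widetilde{u}_0\in C^{m,\sigma}_{\mathrm{loc}}(B_r)$. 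Rescaling by $\bar{u}(y)=\widetilde{u}_0(ry)$, $\bar{f}^\alpha(y)=r^{2m-|\alpha|}f^\alpha(ry)$ so that $\mathcal{L}_0\bar{u}=\sum D_y^\alpha\bar{f}^\alpha$ on $B_1$, the estimate becomes the scale-invariant bound
\begin{align*}
r^{m+\sigma}\,[\nabla^m\widetilde{u}_0]_{C^\sigma(B_{r/2})}\le C\Bigg\{\Big(\fint_{B_r}|\widetilde{u}_0|^2\Big)^{1/2}+\sum_{|\alpha|\le m-1}r^{2m-|\alpha|}\Big(\fint_{B_r}|f^\alpha|^q\Big)^{1/q}\Bigg\}=C\,r^m\,G(r;u_0).
\end{align*}

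Finally, take $P_m^*\in\mathfrak{P}_m$ to be the Taylor polynomial of $\widetilde{u}_0$ at $x_0$ of degree $m$. For $x\in B_{\delta r}$ with $\delta\le 1/8$,
\begin{align*}
|\widetilde{u}_0(x)-P_m^*(x)|\le C|x-x_0|^{m+\sigma}[\nabla^m\widetilde{u}_0]_{C^\sigma(B_{r/2})}\le C\,\delta^{m+\sigma}r^m\,G(r;u_0),
\end{align*}
whence $(\delta r)^{-m}(\fint_{B_{\delta r}}|\widetilde{u}_0-P_m^*|^2)^{1/2}\le C\delta^\sigma G(r;u_0)$. For the source part of $G(\delta r;u_0)$,
\begin{align*}
(\delta r)^{-m}(\delta r)^{2m-|\alpha|}\Big(\fint_{B_{\delta r}}|f^\alpha|^q\Big)^{1/q}\le \delta^{m-|\alpha|-d/q}\,r^{m-|\alpha|}\Big(\fint_{B_r}|f^\alpha|^q\Big)^{1/q}\le \delta^\sigma G(r;u_0),
\end{align*}
since $m-|\alpha|-d/q\ge 1-d/q=\sigma>0$ for $|\alpha|\le m-1$ (this is where $q>d$ is used). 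Summing gives $G(\delta r;u_0)\le C_0\delta^\sigma G(r;u_0)$, and choosing $\delta=\min\{1/8,(2C_0)^{-1/\sigma}\}$ gives (\ref{l42re1}).

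The only nontrivial ingredient is the uniform interior $C^{m,\sigma}$ estimate above with source in divergence form; this is routine for the constant-coefficient operator $\mathcal{L}_0$ since $\bar{A}$ is elliptic and bounded with constants depending only on $d,n,m,\mu$. The gain of $\delta^\sigma$ (rather than, say, $\delta^{m+1}$) reflects exactly the low regularity $C^{m,\sigma}$ allowed by $f^\alpha\in L^q$, which is the threshold needed for the Campanato-type iteration.
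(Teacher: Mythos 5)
Your proposal is correct and follows essentially the same route as the paper: choose the degree-$m$ Taylor polynomial of the solution and invoke the interior $C^{m,\theta}$ (with $0<\theta\le 1-d/q$) estimate for the constant-coefficient operator $\mathcal{L}_0$ with divergence-form source, then pick $\delta$ small; your explicit subtraction of the minimizing polynomial is exactly what the paper encodes by stating its estimate ``for any $P_m\in\mathfrak{P}_m$'' and taking the infimum.
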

\begin{proof}
  By translation and rescaling we may assume that $ x_0=0$ and $r=1$.
 By choosing
 $$
 P_m(x)= \sum_{|\alpha|=0}^{m} \frac{1}{\alpha!} D^\alpha u_0(0) x^\alpha=\sum_{|\alpha|=0}^{m} \frac{1}{\alpha_1!\alpha_2!...\alpha_d!} D^\alpha u_0(0) x_1^{\alpha_1}x_2^{\alpha_2}...x_d^{\alpha_d},
 $$
 we see that
  \begin{align}\label{pl4201}
 G(\delta; u_0) \leq C \delta^\theta  \|\nabla^{m} u_0\|_{C^{0, \theta}(B_\delta)}
 +  \sum_{|\alpha|\le m-1} \delta^{m-|\alpha|} \left(\fint_{B_\delta} |f^\alpha |^q\right)^{1/q}.
  \end{align}
Let $0<\theta<1-\frac{d}{q}$.
It follows from  the $C^{m,\theta}$ regularity for higher-order elliptic systems with constant coefficients
(see e.g.  \cite{gm}) that
\begin{align*}
G(\delta; u_0)
 &\leq C \delta^\theta \left\{ \left(\fint_{B_1} |u_0-P_m|^2\right)^{1/2} + \sum_{|\alpha|\le m-1}
 \left(\fint_{B_1} |f^\alpha |^q\right)^{1/q} \right\}+ C \delta^{1-\frac{d}{q}} \sum_{|\alpha|\le m-1}
 \left(\fint_{B_1} |f^\alpha |^q\right)^{1/q}\nonumber\\
 &\leq C \delta^\theta
  \left\{ \left(\fint_{B_1} |u_0-P_m|^2\right)^{1/2} + \sum_{|\alpha|\le m-1} \left(\fint_{B_1} |f^\alpha |^q\right)^{1/q} \right\}\nonumber
  \end{align*}
  for any $P_m \in \mathfrak{P}_m$. Thus
  $$
 G(\delta; u_0)\leq \frac{1}{2} G(1;u_0),
$$
 if $ \delta \in (0, 1/8)$ is sufficiently small.
 \end{proof}

\begin{lemma}\label{lemm4.3}
For $\varepsilon\in (0, 1/4)$, let $u_\varepsilon$  be
a solution to $\mathcal{L}_\varepsilon u_\varepsilon=\sum_{|\alpha|\le m-1} D^\alpha f^\alpha $ in $B_1$ with $f^\alpha \in L^q(B_1; \mathbb{R}^n)$ for some $q>d $.
For $ 0<r\leq 1/2 $, define
\begin{align*}
H(r)=\frac{1}{r^m} \inf_{P_m\in\mathfrak{P}_{m}} \left\{ \left(\fint_{B_r} |u_\varepsilon-P_m|^2\right)^{1/2}
+ \sum_{|\alpha|\le m-1}
r^{2m-|\alpha|} \left(\fint_{B_r} |f^\alpha |^q\right)^{1/q}  \right\}, \\
I(r)= \frac{1}{r^m} \inf_{P_{m-1}\in\mathfrak{P}_{m-1}} \left\{ \left(\fint_{B_{r}} |u_\varepsilon-P_{m-1}|^2\right)^{1/2} +
\sum_{|\alpha|\le m-1} r^{2m-|\alpha| } \left(\fint_{B_{r}} |f^\alpha |^2\right)^{1/2}  \right\}.
\end{align*}
Then
\begin{align}\label{l43re1}
H(\delta r)\leq \frac{1}{2} H(r)+ C \left(\frac{\varepsilon}{r}\right)^{1/2} I(2r)
\end{align}
for any $r\in[\varepsilon, 1/2]$, where $\delta \in (0,1/8)$ is given by Lemma \ref{lemm4.2}.
\end{lemma}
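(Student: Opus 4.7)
My plan is to approximate $u_\varepsilon$ on $B_r$ by a solution $u_0$ of the homogenized equation via Lemma~\ref{lemm4.1}, invoke the constant-coefficient one-step decay in Lemma~\ref{lemm4.2} to obtain $G(\delta r;u_0)\le\tfrac12 G(r;u_0)$, and then transfer this decay back from $G$ to $H$ by the triangle inequality inside the infimum over $\mathfrak{P}_m$. Since the $L^q$ forcing piece in the definition of $H(t)$ coincides with that in $G(t;u_0)$ (both depend only on $t$ and the $f^\alpha$, not on the solution), those terms will drop out of the comparison cleanly; the only data that needs converting into $I(2r)$ is the $L^2$ error coming out of Lemma~\ref{lemm4.1}.

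Concretely: fix $r\in[\varepsilon,1/2]$ and pick $P_{m-1}\in\mathfrak{P}_{m-1}$ realizing (up to a constant factor) the infimum in $I(2r)$; set $v_\varepsilon:=u_\varepsilon-P_{m-1}$. Because $\mathcal{L}_\varepsilon$ is built from $m$-th order derivatives of its argument, $\mathcal{L}_\varepsilon P_{m-1}=0$, so $v_\varepsilon$ still solves $\mathcal{L}_\varepsilon v_\varepsilon=\sum_{|\alpha|\le m-1} D^\alpha f^\alpha$ in $B_{2r}\subset B_1$. Applying Lemma~\ref{lemm4.1} to $v_\varepsilon$ produces $u_0\in H^m(B_r;\mathbb{R}^n)$ with $\mathcal{L}_0 u_0=\sum D^\alpha f^\alpha$ in $B_r$ and
\begin{equation*}
\left(\fint_{B_r}|v_\varepsilon-u_0|^2\right)^{1/2}\le C\left(\tfrac{\varepsilon}{r}\right)^{1/2} r^m\, I(2r),
\end{equation*}
where the bracket on the right-hand side of \eqref{l4.1re1} is packaged into $r^m I(2r)$ by the choice of $P_{m-1}$ (the $L^2$ norms of $v_\varepsilon$ and of the $f^\alpha$ on $B_{2r}$ match the two pieces inside the infimum defining $I(2r)$).

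Next, since translating by $P_{m-1}\in\mathfrak{P}_{m-1}\subset\mathfrak{P}_m$ does not alter the infimum defining $H(t)$, the triangle inequality inside the infima yields
\begin{equation*}
H(\delta r)\le G(\delta r;u_0)+(\delta r)^{-m}\Bigl(\fint_{B_{\delta r}}|v_\varepsilon-u_0|^2\Bigr)^{1/2},\qquad G(r;u_0)\le H(r)+r^{-m}\Bigl(\fint_{B_r}|v_\varepsilon-u_0|^2\Bigr)^{1/2}.
\end{equation*}
Using $\fint_{B_{\delta r}}\le\delta^{-d}\fint_{B_r}$ on nonnegative integrands, both error terms will be absorbed into $C_\delta(\varepsilon/r)^{1/2}I(2r)$ with $\delta\in(0,1/8)$ fixed by Lemma~\ref{lemm4.2}; chaining these two inequalities with $G(\delta r;u_0)\le\tfrac12 G(r;u_0)$ then delivers \eqref{l43re1}.

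The main obstacle is essentially bookkeeping rather than new analysis: one has to verify (i) that subtracting $P_{m-1}$ from $u_\varepsilon$ both preserves the equation and leaves the infimum defining $H$ unchanged, and (ii) that the $L^q$ forcing pieces in $H$ and $G$ really do coincide so that only the $L^2$-based $I(2r)$ appears on the right. Both work precisely because $\mathfrak{P}_{m-1}\subset\mathfrak{P}_m$ and because Lemma~\ref{lemm4.1} already uses the $L^2$ norm of the $f^\alpha$, which is exactly the norm used in the definition of $I(r)$.
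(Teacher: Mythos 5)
Your proposal is correct and follows essentially the same route as the paper: approximate by a homogenized solution via Lemma \ref{lemm4.1}, apply the one-step decay of Lemma \ref{lemm4.2} to $G(\cdot;u_0)$, and transfer between $G$ and $H$ by the triangle inequality, with the subtraction of a near-optimal $P_{m-1}\in\mathfrak{P}_{m-1}$ (which preserves both the equation and the infimum defining $H$) converting the $L^2$ data from Lemma \ref{lemm4.1} into $I(2r)$. The only difference is cosmetic: the paper performs the $P_{m-1}$ replacement at the very end, whereas you do it at the outset.
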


\begin{proof}
For any fixed $r\in [\varepsilon, 1/2]$, let $u_0$  be a solution to $\mathcal{L}_0 u_0=\sum_{|\alpha|\le m-1} D^\alpha f^\alpha $  in $B_r$. Using Lemma  \ref{lemm4.2}, we deduce that
\begin{align*}
 H(\delta r) &\leq \left(\frac{1}{\delta r}\right)^{m} \left(\fint_{B_{\delta r}} |u_\varepsilon-u_0|^2\right)^{1/2}+G(\delta r; u_0)\\
&\leq \left(\frac{1}{\delta r}\right)^{m} \left(\fint_{B_{\delta r}} |u_\varepsilon-u_0|^2\right)^{1/2} +\frac{1}{2} G(r;u_0)\\
&\leq  \left(\frac{1}{\delta r}\right)^{m} \left(\fint_{B_{\delta r}} |u_\varepsilon-u_0|^2\right)^{1/2}+ \frac{C}{r^m} \left(\fint_{B_{ r}} |u_\varepsilon-u_0|^2\right)^{1/2}+\frac{1}{2} H(r).
\end{align*}
This, together with Lemma \ref{lemm4.1}, implies that
\begin{align*}
H(\delta r)
&\leq C \left(\frac{\varepsilon}{r}\right)^{1/2}  \frac{1}{r^m}   \left\{ \left(\fint_{B_{2r}} |u_\varepsilon|^2\right)^{1/2} +
\sum_{|\alpha|\le m-1}  r^{2m-|\alpha|} \left(\fint_{B_{2r}} |f^\alpha |^2\right)^{1/2}  \right\}+\frac{1}{2} H(r).
\end{align*}
By replacing $u_\varepsilon$ with $u_\varepsilon-P_{m-1}$ for any  $P_{m-1} \in \mathfrak{P}_{m-1}$, we obtain
(\ref{l43re1}).
\end{proof}

The following lemma, a continuous version of Lemma 3.1 in \cite{as}, was first proved in \cite{sh1} (Lemma 8.5 therein). It  plays an essential role in our proof of Theorem \ref{tlip}.

\begin{lemma}\label{lemm4.4}
Let $H(r)$ and $h(r)$ be two nonnegative continuous functions on the interval $(0,1],$ and let $\varepsilon \in (0,1/4).$ Assume that
\begin{align}\label{l44con1}
\max_{r\leq t\leq 2r} H(t)\leq C_0H(2r), ~~~~~\max_{r\leq t,s\leq 2r} | h(t)-h(s)|\leq C_0H(2r),
\end{align}
for any $r\in [\varepsilon, 1/2],$  and also
\begin{align}\label{l44con2}
H(\delta r) \leq \frac{1}{2} H(r) + C_0 \omega (\varepsilon/r)\left\{ H(2r)+h(2r)\right\},
\end{align}
for any $r\in [\varepsilon, 1/2],$ where  $\delta\in (0,1/4)$  and $\omega$ is a nonnegative increasing function on $[0,1]$ such that
$\omega(0)=0$ and
\begin{align}\label{l44con3}
\int_0^1 \frac{\omega(\varsigma)}{\varsigma}d\varsigma< \infty.
\end{align}
Then
\begin{align}\label{l44re1}
\max_{\varepsilon\leq r\leq 1} \left\{H(r) +h(r)\right\}\leq C \left\{H(1) +h(1)\right\}.
\end{align}
\end{lemma}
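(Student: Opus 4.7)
The plan is to iterate the decay inequality (\ref{l44con2}) along the geometric sequence of scales $r_k := \delta^k$, $k = 0, 1, \ldots, K$ (with $K$ the largest integer such that $\delta^K \ge \varepsilon$), and then transfer the resulting discrete bound to a continuous estimate on $[\varepsilon,1]$ via (\ref{l44con1}). The Dini hypothesis (\ref{l44con3}) enters through the summability of the weight sequence $\omega_k := \omega(\varepsilon\delta^{-k})$.

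First I would extract from (\ref{l44con1}) comparison estimates between consecutive discrete scales. Iterating $\max_{r \le t \le 2r} H(t) \le C_0 H(2r)$ some $N_\delta := \lceil\log_2(1/\delta)\rceil$ times along the dyadic chain $r_k, 2 r_k, 4r_k, \ldots, r_{k-1}$ gives
\begin{equation*}
\sup_{r_k \le t \le r_{k-1}} H(t) \le C_\delta H(r_{k-1}),
\end{equation*}
and a parallel telescoping of the oscillation bound for $h$ gives $|h(t) - h(s)| \le C_\delta H(r_{k-1})$ for all $t, s \in [r_k, r_{k-1}]$. Setting $a_k := H(r_k)$, the latter yields both $h(r_k) \le h(1) + C_\delta \sum_{j<k} a_j$ and $H(2 r_k) + h(2 r_k) \le C_\delta(a_{k-1} + a_{k-2}) + h(1) + C_\delta \sum_{j<k} a_j$. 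Substituting these bounds into (\ref{l44con2}) at $r = r_k$ produces the recursion
\begin{equation*}
a_{k+1} \le \tfrac{1}{2}\,a_k + C\,\omega_k\Bigl[\,h(1) + a_{k-1} + \textstyle\sum_{j<k} a_j\,\Bigr].
\end{equation*}

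Unrolling this recursion with the geometric prefactor $1/2$, summing in $k$, and applying Fubini produces the bootstrap inequality
\begin{equation*}
\Sigma := \textstyle\sum_{k=0}^{K} a_k \le 2\,a_0 + C\,\Omega_\varepsilon\,(h(1) + M), \qquad M := \max_{0 \le k \le K}(a_k + h(r_k)),
\end{equation*}
where by a change of variables
\begin{equation*}
\Omega_\varepsilon := \textstyle\sum_{k=0}^{K} \omega_k \le \frac{C}{\log(1/\delta)}\displaystyle\int_0^1 \frac{\omega(\varsigma)}{\varsigma}\,d\varsigma < \infty
\end{equation*}
uniformly in $\varepsilon$ by the Dini condition (\ref{l44con3}). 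An analogous estimate bounds $\max_k a_k$, and the telescoping bound on $h$ gives $\max_k h(r_k) \le h(1) + C_\delta \Sigma$; combined, they yield a bootstrap of the form $M \le C_*(H(1) + h(1)) + C_*\,\Omega_\varepsilon\,M$. The continuous estimate (\ref{l44re1}) then follows by the comparison step: for any $r \in [\varepsilon, 1]$, picking $k$ with $r \in [r_{k+1}, r_k]$ gives $H(r) + h(r) \le C(a_{k-1} + h(r_k)) \le C M$.

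The main obstacle lies in closing the bootstrap when $C_*\,\Omega_\varepsilon \ge 1$, since the Dini integral bounds $\Omega_\varepsilon$ uniformly but not necessarily by a small constant. I would resolve this by a \emph{block iteration}: applying (\ref{l44con2}) $n$ times consecutively yields a recursion in which the geometric prefactor is improved to $2^{-n}$. Choosing $n$ (depending only on $C_*$, the Dini integral, and $\delta$) so that $C_*\,2^{-n}\,\Omega_\varepsilon < 1/2$, the absorption now closes block-by-block along the discrete chain, and one obtains $M \le C(H(1) + h(1))$ with $C$ depending on $d$, $\delta$, $C_0$, and $\int_0^1 \omega(\varsigma)/\varsigma\,d\varsigma$. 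This delivers (\ref{l44re1}).
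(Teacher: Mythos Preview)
The paper does not prove this lemma; it only states it and refers to \cite{sh1} (Lemma 8.5 there). So there is no in-paper proof to compare against, and the question is whether your argument stands on its own.

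Your discretisation along $r_k=\delta^k$ and the use of (\ref{l44con1}) to pass between consecutive $\delta$-adic scales are the right opening moves, and you correctly arrive at the central difficulty: the bootstrap inequality $M\le C_*(H(1)+h(1))+C_*\Omega_\varepsilon M$ cannot be closed by absorption, because the Dini integral bounds $\Omega_\varepsilon$ but need not make it small.

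The gap is in the proposed repair. Block iteration does not produce a coefficient $C_*2^{-n}\Omega_\varepsilon$ in front of $M$. Applying (\ref{l44con2}) $n$ times gives
\[
a_{k+n}\le 2^{-n}a_k+C\sum_{i=0}^{n-1}2^{-i}\,\omega_{k+n-1-i}\,\Phi_{k+n-2-i},
\]
so the factor $2^{-n}$ improves only the homogeneous term; the source terms carrying $M$ are simply redistributed among blocks, and when you sum (or take the maximum) over blocks their total contribution is still $CM\Omega_\varepsilon$, independently of $n$. Thus the same obstruction reappears after block iteration.

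The standard way to close the argument is not absorption but a Gronwall step. From your recursion one obtains, after summing the geometric series and inserting the telescoped bound for $h$,
\[
\Phi_k:=H(r_k)+h(r_k)\le C\bigl(H(1)+h(1)\bigr)+C'\sum_{j<k}\omega_j\,\Phi_{j-1}.
\]
Discrete Gronwall then yields
\[
\Phi_k\le C\bigl(H(1)+h(1)\bigr)\exp\Bigl(C'\sum_j\omega_j\Bigr)\le C\bigl(H(1)+h(1)\bigr)\exp\Bigl(C''\!\int_0^1\frac{\omega(\varsigma)}{\varsigma}\,d\varsigma\Bigr),
\]
finite by (\ref{l44con3}). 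The continuous estimate (\ref{l44re1}) then follows from your comparison step. Replacing the block-iteration paragraph with this Gronwall argument fixes the proof.
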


We are now ready to give the proof of Theorem \ref{tlip}.

\begin{proof}[\bf Proof of Theorem \ref{tlip}]
By translation and rescaling we may assume that $x_0=0$ and $R=1$.
We also assume that  $\varepsilon\in (0, 1/4)$.
For otherwise we have $r\in [1/4, 1/2)$ and the result is trivial.
Let $u_\varepsilon$  be a solution to $\mathcal{L}_\varepsilon u_\varepsilon=\sum_{|\alpha|\le m-1} D^\alpha f^\alpha $ in $B_1$ with $f\in L^q(B_1)$ for some
$ q>d$.
For  $r\in (0,1)$, let $H(r), I(r) $ be defined as in Lemma \ref{lemm4.3} and let $\omega(t)=t^{1/2}$, which satisfies (\ref{l44con3}).
Let
$$ h(r)=  \sum_{|\alpha|=m}\frac{1}{\alpha!} |D^\alpha P_{mr}|,
$$
where
$P_{mr}$ is an element in $ \mathfrak{P}_{m}$  such that
$$H(r)=\frac{1}{r^m}
\left\{ \left(\fint_{B_r} |u_\varepsilon-P_{mr}|^2\right)^{1/2} + \sum_{|\alpha|\le m-1}
r^{2m-|\alpha|} \left(\fint_{B_r} |f^\alpha |^q\right)^{1/q}  \right\}.$$
Next let us verify  that $H(r), h(r)$ satisfy  conditions (\ref{l44con1}) and (\ref{l44con2}).
Since $t\in [r,2r]$, from the definition it is obvious that
\begin{align}\label{ptlip01}
H(t)\leq  C H(2r).
\end{align}
Also, by the definition of $h(r),$ we have
\begin{align*}
|h(t)-h(s)|&\leq \sum_{|\alpha|=m} \frac{1}{\alpha!} |D^\alpha (P_{mt}-P_{ms})|.
\end{align*}
Since $\mathcal{L}_0 (P_{mt} -P_{mr})=0$ in $\mathbb{R}^d$,
 it follows from Caccioppoli's inequality that for any $t, s\in [r, 2r]$,
\begin{equation}\label{ptlip02}
\aligned
|h(t)-h(s)|
& \leq \frac{C}{r^m}
 \left(\fint_{B_r} |  P_{mt}-P_{ms} |^2 \right)^{1/2}\\
 &\leq \frac{C}{r^m}
  \left(\fint_{B_r} | u_\varepsilon -P_{mt} |^2 \right)^{1/2}
  + \frac{C}{r^m} \left(\fint_{B_r} | u_\varepsilon -P_{ms} |^2 \right)^{1/2}\\
 &\leq \frac{C}{r^m}  \left(\fint_{B_t} | u_\varepsilon -P_{mt} |^2 \right)^{1/2}
 + \frac{C}{s^m} \left(\fint_{B_s} | u_\varepsilon -P_{ms}  |^2 \right)^{1/2}\\
 &\leq C \{ H(t) +H(s) \}\\
 &\leq C H(2r).
 \endaligned
\end{equation}
This gives condition (\ref{l44con1}).

To show (\ref{l44con2}), we use (\ref{l43re1}) and the observation  that
$$
\aligned
I(2r) & \le H(2r) + \frac{1}{(2r)^m} \inf_{P_{m-1}\in\mathfrak{P}_{m-1}}
\left(\fint_{B_{2r}} |P_{m(2r)} -P_{m-1}|^2\right)^{1/2}\\
&\le  H(2r) + C\, h(2r),
\endaligned
$$
where the last step follows from Poincar\'e's inequality.
 Therefore, by Lemma \ref{lemm4.4}, we obtain
 \begin{equation}\label{Lip-11}
\aligned
 \frac{1}{r^m}
 \inf_{P_{m-1}\in\mathfrak{P}_{m-1}}    \left(\fint_{B_r} |u_\varepsilon-P_{m-1}|^2\right)^{1/2}
 &\le H(r) +  \frac{1}{r^m} \inf_{P_{m-1}\in\mathfrak{P}_{m-1}}
\left(\fint_{B_{r}} |P_{mr} -P_{m-1}|^2\right)^{1/2}\\
  &\leq C \left\{ H(r)+h(r)  \right\}\\
  &\leq C \left\{ H(1)+h(1)  \right\}  \\
  &\leq C   \left\{ \left(\fint_{B_1} |  u_\varepsilon|^2\right)^{1/2} + \sum_{|\alpha|\le m-1}
  \left(\fint_{B_1} |f^\alpha |^q\right)^{1/q}  \right\},
\endaligned
\end{equation}
for any $r\in (\varepsilon, 1/2)$,
where in the last step we have used the observation that
$$
\aligned
h(r)  &\le \frac{C}{r^m} \left(\fint_{B_r} |P_{mr}|^2\right)^{1/2}\\
& \le \frac{C}{r^m} \left(\fint_{B_r} |u_\varepsilon -P_{mr}|^2\right)^{1/2}
+\frac{C}{r^m} \left(\fint_{B_r} |u_\varepsilon|^2\right)^{1/2}\\
&\le C\,  H(r) + \frac{C}{r^m} \left(\fint_{B_r} |u_\varepsilon|^2\right)^{1/2}.
\endaligned
$$
The desired estimate (\ref{larlip}) now follows from (\ref{Lip-11})
by Caccioppoli's inequality.

We now turn to the second part of Theorem \ref{tlip}.
Again we may assume that $x_0=0$ and $R=1$.
We also assume that $ 0<\varepsilon<(1/2)$; the case $\varepsilon \ge (1/2)$
 follows from the standard $C^{m,\theta}$-regularity results for higher-order elliptic systems \cite{gm}:
 if $A$   satisfies (\ref{cod1})--(\ref{cod3}) and (\ref{hol}), and $u$
 is a solution to $\mathcal{L}_1 u =\sum_{|\alpha|\le m-1} D^\alpha f^\alpha $ in $B_1$ with $f^\alpha \in L^q(B_1; \mathbb{R}^n)$ for some
 $ q>d,$ then
\begin{align}\label{ptlip03}
 |\nabla^m u(0)| \leq C \left\{ \left(\fint_{B_1} |\nabla^m u |^2\right)^{1/2} + \sum_{|\alpha|\le m-1}
 \left(\fint_{B_1} |f^\alpha |^q\right)^{1/q}\right\},
\end{align}
where $C$ depends on $d,n,m, \mu, q, \Lambda_0$ and $\tau_0.$

To handle the case $0<\varepsilon<\frac{1}{2},$ we set $w(x)=\varepsilon^{-m}u_\varepsilon(\varepsilon x).$
Direct computations yield that $ \mathcal{L}_1w=\sum_{|\alpha|\le m-1} D^\alpha \big\{ \varepsilon^{m-|\alpha|}
f^\alpha (\varepsilon x)\big\} $ in $B_1$.
It then follows from (\ref{ptlip03}) that
\begin{align*}
|\nabla^m w(0)| &\leq C \left\{ \left(\fint_{B_1} |\nabla^m w |^2\right)^{1/2}
+ \sum_{|\alpha|\le m-1}
\varepsilon^{m-|\alpha|} \left(\fint_{B_1} |f^\alpha (\varepsilon x)|^qdx\right)^{1/q}\right\}\nonumber\\
&\leq C \left\{ \left(\fint_{B_\varepsilon} |\nabla^m u_\varepsilon |^2\right)^{1/2}
+ \sum_{|\alpha|\le m-1}
\varepsilon^{m-|\alpha|-\frac{d}{q}} \left(\fint_{B_1} |f^\alpha |^qdx\right)^{1/q}\right\}.
\end{align*}
This, combined with (\ref{larlip}) for $r=\varepsilon$
and the fact that $\nabla^m w(0)=\nabla^m u_\varepsilon(0)$,
 gives the estimate (\ref{lip}).
 \end{proof}

As a consequence of (\ref{larlip}), we establish a Liouville type result.
Note that  we only assume $A$ is elliptic and bounded measurable, apart from the periodicity condition (\ref{cod3}).

\begin{theorem}
Assume that $A$ satisfies (\ref{cod1})--(\ref{cod3}).
Let $u\in H^m_{loc}(\mathbb{R}^d; \mathbb{R}^n)$ be a weak solution to
 $ \mathcal{L}_1 u =0  \text{ in }  \mathbb{R}^d. $  Suppose that there exist a constant $C_u>0$ and some $\delta\in (0,1)$
 such that
\begin{align}\label{t41con1}
\left(\fint_{B(0, R)} |u|^2\right)^{1/2} \leq C_u R^{m-1+\delta}\quad
 \text{ for any } R>1.
\end{align}
 Then   $u\in \mathfrak{P}_{m-1}$.
\end{theorem}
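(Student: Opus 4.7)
The strategy is to apply the large-scale $C^{m-1,1}$ estimate (\ref{larlip}) of Theorem \ref{tlip} with all source terms $f^\alpha = 0$ and with scale parameter $\varepsilon = 1$, and then to let the outer radius $R$ tend to infinity. The growth exponent $m-1+\delta$ is strictly less than $m$, which is exactly what is needed to beat the factor $R^{-m}$ coming from (\ref{larlip}).

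Concretely, I would fix any $r \ge 1$ and any $R \ge 2r$. Since $u$ is a weak solution of $\mathcal{L}_1 u = 0$ on $\mathbb{R}^d$, applying (\ref{larlip}) on $B_R = B(0,R)$ with $\varepsilon = 1$ gives
\begin{align*}
\left(\fint_{B_r} |\nabla^m u|^2\right)^{1/2} \leq \frac{C}{R^m} \left(\fint_{B_R} |u|^2\right)^{1/2}.
\end{align*}
The growth hypothesis (\ref{t41con1}) bounds the right-hand side by $C\, C_u R^{m-1+\delta}/R^m = C\, C_u R^{\delta - 1}$. Since $\delta \in (0,1)$, letting $R \to \infty$ forces $\nabla^m u \equiv 0$ almost everywhere on $B_r$. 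As $r \ge 1$ was arbitrary, this yields $\nabla^m u \equiv 0$ almost everywhere in $\mathbb{R}^d$. Combined with $u \in H^m_{\mathrm{loc}}(\mathbb{R}^d; \mathbb{R}^n)$, this forces each component of $u$ to coincide with a polynomial of degree at most $m-1$, i.e., $u \in \mathfrak{P}_{m-1}$.

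The argument is essentially immediate once (\ref{larlip}) is available, so there is no real obstacle; the only point requiring care is to verify the hypothesis $\varepsilon \le r \le R/2$, which is why one takes $\varepsilon = 1$ together with $r \ge 1$ and $R \ge 2r$. Note that the result does not require any regularity of $A$ beyond the measurable ellipticity and periodicity assumptions (\ref{cod1})--(\ref{cod3}), because (\ref{larlip}) holds under exactly these hypotheses; the full-scale estimate (\ref{lip}), which would require H\"older continuity of $A$, plays no role here.
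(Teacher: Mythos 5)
Your proposal is correct and is essentially identical to the paper's own proof: both apply the large-scale estimate (\ref{larlip}) with $\varepsilon=1$ and $f^\alpha=0$, bound the right-hand side by $CR^{\delta-1}$ using the growth hypothesis, and let $R\to\infty$ to conclude $\nabla^m u\equiv 0$. No further comment is needed.
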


\begin{proof}
It follows from (\ref{larlip}) that for $1<r<R/2$,
$$
\aligned
\left(\fint_{B(0, r)} |\nabla^m u |^2\right)^{1/2}
&\le \frac{C}{R^m} \left(\fint_{B(0, R)} | u |^2\right)^{1/2}\\
&\le C R^{\delta-1}.
\endaligned
$$
By letting $R\to \infty$ we see that $\nabla^m u=0$ in $B(0,r)$.
Since $r>1$ is arbitrary, it follows that $\nabla^m u=0$ in $\mathbb{R}^d$.
This implies that each component of $u$ is a polynomial of degree at most $m-1$.
\end{proof}


\section{$W^{m,p}$ estimates}

It follows from (\ref{larlip}) and Poincar\'e's inequality that
if $\mathcal{L}_\varepsilon (u_\varepsilon)=0$ in $B(x_0, r)$ and $0<\varepsilon<r$, then
\begin{equation}\label{Large-scale-lip}
\left(\fint_{B(x_0, \varepsilon)} |\nabla^m u_\varepsilon|^2 \right)^{1/2}
\le C \left(\fint_{B(x_0, r)} |\nabla^m u_\varepsilon|^2 \right)^{1/2},
\end{equation}
where $C$ depends only on $d$, $m$, $n$ and $\mu$.
In this section we will use (\ref{Large-scale-lip}) to establish
 the uniform  $W^{m,p}$ estimates
 under the additional smoothness assumption: $A\in V\!M\!O (\mathbb{R}^d)$.

 \begin{lemma}\label{lemma-5.1}
 Assume that $A$ satisfies  (\ref{cod1})--(\ref{cod3}) and
$A\in V\!M\!O (\mathbb{R}^d)$.
Let $u_\varepsilon \in H^m(2B; \mathbb{R}^n)$ be a weak solution to
$\mathcal{L}_\varepsilon (u_\varepsilon)=0$ in $2B$ for some ball $B=B(x_0, r)$.
Then for any $2<p<\infty$,
\begin{equation}\label{5.1-1}
\left(\fint_B |\nabla u_\varepsilon|^p\right)^{1/p}
\le C_p \left(\fint_{2B} |\nabla u_\varepsilon|^2\right)^{1/2},
\end{equation}
where $C_p$ depends only on $d$, $m$, $n$, $p$, $\mu$ and $\varrho(t)$ in (\ref{vmo}).
\end{lemma}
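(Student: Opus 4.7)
The plan is to establish this reverse H\"older inequality by combining the large-scale Lipschitz estimate \eqref{Large-scale-lip} (which handles scales $\ge\varepsilon$) with the standard interior $W^{m,p}$ theory for $2m$-order elliptic systems with VMO coefficients (which handles scales $\le\varepsilon$). The two regimes meet cleanly at the transition scale $r\sim\varepsilon$ because the rescaled coefficient $\widetilde A(z)=A(z+y/\varepsilon)$ inherits the same ellipticity constants as $A$ and, by translation invariance of the VMO condition, exactly the same modulus $\varrho$. In what follows the reverse H\"older inequality is for $|\nabla^m u_\varepsilon|$ (consistent with the statement of Theorem \ref{twmp}).

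For the small-scale step, I would fix $B(y,s)$ with $B(y,2s)\subset 2B$ and $s\le\varepsilon$, and rescale by setting $v(z)=\varepsilon^{-m}u_\varepsilon(y+\varepsilon z)$. Then $v$ satisfies a $2m$-order elliptic system with coefficients $\widetilde A(z)=A(z+y/\varepsilon)$ which has the same bounds $\mu$ and VMO modulus $\varrho$ as $A$. The classical interior $W^{m,p}$ estimate for $2m$-order elliptic systems with VMO coefficients, applied at unit scale, gives
\begin{equation*}
\Bigl(\fint_{B(0,s/\varepsilon)}|\nabla^m v|^p\Bigr)^{1/p}\le C_p\Bigl(\fint_{B(0,2s/\varepsilon)}|\nabla^m v|^2\Bigr)^{1/2},
\end{equation*}
and undoing the rescaling yields the reverse H\"older inequality on $B(y,s)$ with constant depending only on $d,m,n,p,\mu$ and $\varrho$.

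For the large-scale step, I would assume $s>\varepsilon$ with $B(y,2s)\subset 2B$ and cover $B(y,s)$ by a bounded-overlap collection $\{B(y_j,\varepsilon/2)\}$ with $y_j\in B(y,s)$. The small-scale step gives
\begin{equation*}
\int_{B(y_j,\varepsilon/2)}|\nabla^m u_\varepsilon|^p\le C\,|B(y_j,\varepsilon/2)|\Bigl(\fint_{B(y_j,\varepsilon)}|\nabla^m u_\varepsilon|^2\Bigr)^{p/2},
\end{equation*}
while \eqref{Large-scale-lip} (iterated if necessary, since $B(y_j,\varepsilon)\subset B(y,2s)$) bounds the right-hand $L^2$ average by $C\bigl(\fint_{B(y,2s)}|\nabla^m u_\varepsilon|^2\bigr)^{1/2}$. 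Summing over $j$ and dividing by $|B(y,s)|$ then yields the desired inequality; specializing to $y=x_0$, $s=r$ (after a harmless contraction of the ambient ball to absorb the factor arising from $B(y,2s)$ versus $2B$) gives \eqref{5.1-1}.

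The principal obstacle is securing, in a form one can cite directly, the unit-scale $W^{m,p}$ estimate for $2m$-order elliptic systems with VMO coefficients, with constants depending only on $d,m,n,p,\mu$ and $\varrho$. For second-order systems this is the classical Chiarenza--Frasca--Longo theory, and the extension to higher-order systems is by now standard via the same freezing-coefficient-plus-real-variable-argument scheme; once that input is in hand, the scale-separation outlined above is essentially mechanical, and the translation invariance of the VMO seminorm is exactly what makes the constants independent of the rescaling center $y/\varepsilon$.
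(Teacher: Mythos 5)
Your proposal is correct and follows essentially the same route as the paper: rescale by $\varepsilon$ to reduce the small scales to the unit-scale interior $W^{m,p}$ estimate for $2m$-order systems with VMO coefficients (the paper cites Dong--Kim for this input), transfer from scale $\varepsilon$ up to scale $r$ via \eqref{Large-scale-lip}, and conclude by covering $B$ with balls of radius $\varepsilon/2$ and summing. Your reading of \eqref{5.1-1} as an estimate for $|\nabla^m u_\varepsilon|$ is also the intended one.
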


\begin{proof}
By translation and dilation we may assume that $x_0=0$ and $r=1$.
Note that the case $\varepsilon\ge (1/4)$ follows from the existing regularity results for higher-order
equations with VMO coefficients \cite{dk}. This is because $A(x/\varepsilon)$ satisfies (\ref{vmo}) uniformly in
$\varepsilon$.

To handle the case $0<\varepsilon<(1/4)$,
we let $w(x)=u_\varepsilon (\varepsilon x)$.
Then $\mathcal{L}_1 (w)=0$ in $B(0, 2\varepsilon^{-1})$.
It follows  from \cite{dk} that
$$
\left(\fint_{B(0,1/2)} |\nabla^m w|^p \right)^{1/p}
\le C \left(\fint_{B(0,1)} |\nabla^m w|^2 \right)^{1/2}.
$$
By a change of variables this leads to
$$
\left(\fint_{B(0,\varepsilon/2 )} |\nabla^m u_\varepsilon|^p \right)^{1/p}
  \le C \left(\fint_{B(0,\varepsilon )} |\nabla^m u_\varepsilon|^2 \right)^{1/2}
 \le C \left(\fint_{B(0,2 )} |\nabla^m u_\varepsilon|^2 \right)^{1/2},
$$
where we have used (\ref{Large-scale-lip}) for the last inequality.
The same argument also shows that
\begin{equation}\label{5.1-2}
\left(\fint_{B(y,\varepsilon/2 )} |\nabla^m u_\varepsilon|^p \right)^{1/p}
  \le C \left(\fint_{B(0,2 )} |\nabla^m u_\varepsilon|^2 \right)^{1/2},
\end{equation}
for any $y\in B(0, 1)$. It follows that
\begin{equation}\label{5.1-3}
\int_{B(y, \varepsilon/2)} |\nabla^m u_\varepsilon|^p
\le C \varepsilon^d \| \nabla^m u_\varepsilon\|^p_{L^2(B(0,2))}.
\end{equation}
By covering $B(0,1)$ with a finite number of balls $\{ B(y_i, \varepsilon/2)\}$,
we may deduce (\ref{5.1-1}) from (\ref{5.1-3}).
\end{proof}

Our proof of Theorem \ref{twmp} relies on a real variable argument in the following theorem,
formulated in \cite{szt, szf} (also see  \cite{ci} for the original ideas).

\begin{theorem} \label{theo5.1}
Let $F\in L^2(4B_0)$ and $f\in L^p(4B_0)$ for some $2<p<q<\infty$, where $B_0$ is a ball in $\mathbb{R}^d$. Suppose that for each ball $B\subset 2B_0$ with $|B|<c_0|B_0|$, there exists two measurable functions $F_B$ and $R_B$ on $2B$ such that $|F|\leq |F_B|+|R_B|$ on $2B$, and
\begin{align}
&\left(\fint_{2B} |R_B|^q\right)^{1/q}\leq C_1 \left\{ \left(\fint_{c_2B} |F|^2\right)^{1/2} +\sup_{B\subset B' \subset 4B_0} \left(\fint_{B'} |f|^2\right)^{1/2}\right\},\label{t51con1} \\
  &\left(\fint_{2B} |F_B|^2\right)^{1/2} \leq C_2 \sup_{B\subset B' \subset 4B_0} \left(\fint_{B'} |f|^2\right)^{1/2}, \label{t51con2}
\end{align}
where $C_1,C_2>0$, $0<c_1<1$ and $c_2>2.$  Then $F\in L^p(B_0)$ and
\begin{align}
\left(\fint_{B_0} |F|^p\right)^{1/p} \leq C   \left(\fint_{4B_0} |F|^2\right)^{1/2}+ \left(\fint_{4B_0} |f|^p\right)^{1/p}, \label{t51re1}
\end{align}
where $C$ depends only on $d, C_1,C_2, c_1, c_2, p$ and $ q$.
\end{theorem}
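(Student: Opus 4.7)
The plan is to use the real variable argument originated in \cite{ci} and developed by Shen in \cite{szt, szf, sh1}: prove a good-$\lambda$ inequality for a localized Hardy--Littlewood maximal function and then integrate in $\lambda$. I would work with the restricted maximal operator
\[
\mathcal{M}(g)(x)=\sup_{\substack{x\in B\\ B\subset 4B_0}}\fint_B |g|,
\]
which is bounded on $L^s(4B_0)$ for every $s>1$. Since $|F|^2\le \mathcal{M}(|F|^2)$ a.e.\ in $B_0$, it suffices to bound $\|\mathcal{M}(|F|^2)\|_{L^{p/2}(B_0)}$. After rescaling so that the right-hand side of (\ref{t51re1}) is comparable to $1$, the goal reduces to proving, for some large $N>1$ and small $\gamma>0$, the level-set inequality
\[
\bigl|\{\mathcal{M}(|F|^2)>N^{2}\lambda\}\cap B_0\bigr|
\le \tfrac{1}{2N^{p-2}}\bigl|\{\mathcal{M}(|F|^2)>\lambda\}\cap B_0\bigr|
+\bigl|\{\mathcal{M}(|f|^2)>\gamma^{2}\lambda\}\cap B_0\bigr|
\]
for all $\lambda\ge\lambda_0$, where $\lambda_0$ is a large absolute constant. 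Multiplying by $\lambda^{p/2-1}$ and integrating then yields (\ref{t51re1}) by a layer-cake computation, using that $\mathcal{M}$ is of strong type $(p/2,p/2)$ on $L^{p/2}(4B_0)$.

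To prove the good-$\lambda$ inequality I would apply, for each fixed $\lambda\ge\lambda_0$, a Vitali--Whitney stopping-time decomposition to the open set $E_\lambda=\{x\in B_0:\mathcal{M}(|F|^2)>\lambda\}$. Choosing $\lambda_0$ so large that $|E_\lambda|\le \eta|B_0|$ for a small absolute $\eta$, this produces disjoint balls $\{B_i\}$ with $B_i\subset 2B_0$, $|B_i|<c_1|B_0|$, $\fint_{B_i}|F|^2>\lambda$, and a stopping bound $\fint_{c_2 B_i}|F|^2\le\lambda$ matching the enlargement in (\ref{t51con1}). Every point where $\mathcal{M}(|F|^2)>N^2\lambda$ lies in some $B_i$, so on each such ball I would invoke the hypothesized decomposition $|F|\le |F_{B_i}|+|R_{B_i}|$ on $2B_i$ and estimate $|\{|F_{B_i}|^2>N^2\lambda/4\}|$ and $|\{|R_{B_i}|^2>N^2\lambda/4\}|$ separately. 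Chebyshev combined with (\ref{t51con2}) controls the first by $CN^{-2}\lambda^{-1}|B_i|\sup_{B_i\subset B'\subset 4B_0}\fint_{B'}|f|^2$, while Chebyshev combined with (\ref{t51con1}) and the stopping bound controls the second by
\[
CN^{-q}|B_i|+CN^{-q}\lambda^{-q/2}|B_i|\Bigl(\sup_{B_i\subset B'\subset 4B_0}\fint_{B'}|f|^2\Bigr)^{q/2}.
\]

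Summing in $i$ and using disjointness gives $\sum_i|B_i|\le|E_\lambda|$, so the leading term $CN^{-q}|E_\lambda|$ is absorbed into the first term of the good-$\lambda$ inequality precisely when $CN^{2-q}\le\tfrac{1}{2}N^{2-p}$, which is possible thanks to $q>p$ by taking $N$ large. The remaining $f$-dependent contributions combine, via the weak-type bound for $\mathcal{M}$ applied to $|f|^2$, into the level set $\{\mathcal{M}(|f|^2)>\gamma^{2}\lambda\}$ for suitably small $\gamma$. The principal technical obstacle is implementing the stopping-time decomposition inside $B_0$ so that $B_i\subset 2B_0$, $c_2B_i\subset 4B_0$, and $|B_i|<c_1|B_0|$ all hold simultaneously; this is exactly what forces the hypotheses to live on $4B_0$ and what determines the threshold $\lambda_0$ in terms of $\fint_{4B_0}|F|^2$. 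The strict inequality $p<q$ is indispensable for the absorption step.
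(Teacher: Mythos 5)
The paper does not actually prove Theorem \ref{theo5.1}; its ``proof'' is the citation \cite[Theorem 3.2]{szt}. Your outline reconstructs precisely the good-$\lambda$/stopping-time argument of that reference (going back to \cite{ci}): localized maximal function, a Calder\'on--Zygmund/Vitali selection at height $\lambda$ with the stopping bound $\fint_{c_2B_i}|F|^2\le\lambda$ matched to the enlargement in (\ref{t51con1}), the dichotomy on $\sup_{B_i\subset B'\subset 4B_0}\fint_{B'}|f|^2$ versus $\gamma^2\lambda$, Chebyshev at exponent $2$ for $F_{B_i}$ and at exponent $q$ for $R_{B_i}$, and absorption using $q>p$. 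In structure this is exactly the source's argument and it is sound.

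There is, however, one quantitative slip that breaks the final absorption as literally written. After substituting $\lambda\mapsto N^2\lambda$ in $\int_0^\infty\lambda^{p/2-1}\bigl|\{\mathcal{M}(|F|^2)>\lambda\}\cap B_0\bigr|\,d\lambda$ you gain a factor $N^{-p}$, so the coefficient multiplying $\bigl|\{\mathcal{M}(|F|^2)>\lambda\}\cap B_0\bigr|$ in the good-$\lambda$ inequality must be at most $\tfrac12 N^{-p}$, not $\tfrac12 N^{2-p}$: with your coefficient the term to be absorbed carries the factor $N^{p}\cdot\tfrac12 N^{2-p}=N^2/2>1$. The repair is immediate from your own estimates: the $f$-free contribution from (\ref{t51con1}) is $CN^{-q}|B_i|$, and $CN^{-q}\le\tfrac14 N^{-p}$ for $N$ large precisely because $q>p$, while the $F_{B_i}$-term and the $f$-dependent remainders are bounded by $(C\gamma^2N^{-2}+C\gamma^qN^{-q})|B_i|\le\tfrac14 N^{-p}|B_i|$ once $\gamma$ is chosen small \emph{depending on $N$} (a dependence you should make explicit). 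Two smaller points: genuinely disjoint Vitali balls do not cover $E_\lambda$ (only their dilates do), so use disjoint dyadic cubes or finitely overlapping balls and track the overlap constant; and to pass from $\mathcal{M}(|F|^2)$ to $\mathcal{M}(|F|^2\chi_{2B_i})$ on $B_i$ you need the observation that any admissible ball through a point of $B_i$ not contained in $2B_i$ has $|F|^2$-average at most $C\lambda$ by the stopping condition, so for $N^2>C$ the supremum is realized by balls inside $2B_i$.
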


\begin{proof}
See \cite[Theorem 3.2]{szt}.
\end{proof}

\begin{proof}[\bf Proof of Theorem \ref{twmp}.]
Let $u_\varepsilon$ be a solution to
$$
\mathcal{L}_\varepsilon u_\varepsilon=\sum_{|\alpha|\le m} D^\alpha f^\alpha  \quad \text{ in } 2B_0,
$$
  with $f^\alpha\in L^p(2B_0; \mathbb{R}^n)$ for some $2<p<\infty $.
  By rescaling we may assume that diam$(B_0)=2$.
   For each ball $B$ with $4B\subset 2B_0$,
   we decompose  $u_\varepsilon$ as $ u_\varepsilon=v_\varepsilon+w_\varepsilon$ on $2B$,
   where $v_\varepsilon \in H^m_0(4B; \mathbb{R}^n)$
   is the solution to  $\mathcal{L}_\varepsilon v_\varepsilon=\sum_{|\alpha|\le m} D^\alpha f^\alpha  \text{ in } 4B$
   and $w_\varepsilon$ is the solution to $\mathcal{L}_\varepsilon w_\varepsilon= 0  \text{ in } 4B$. Setting $q=p+1$,
   $$
    F=|\nabla^mu_\varepsilon|, \quad F_B=|\nabla^mv_\varepsilon|, \quad
    R_B=|\nabla^mw_\varepsilon| \quad \text{ and } \quad
    f=\sum_{|\alpha|\le m} |f^\alpha|.
    $$
     Clearly, $ F\leq F_B+R_B $ on $2B$.
     Note that (\ref{t51con2}) follows from the standard energy estimates.
     Therefore,  to derive (\ref{wmp}), we only need to verify  condition (\ref{t51con1})
      for any $ 2<p<\infty$.  This is done by using Lemma \ref{lemma-5.1}.
      Indeed,
\begin{align*}
\left(\fint_{2B} |R_B|^q\right)^{1/q}&\leq  C \left(\fint_{2B } |\nabla^m w_\varepsilon|^2\right)^{1/2}\\
&\leq C \left(\fint_{2B} |\nabla^m u_\varepsilon|^2\right)^{1/2}+ \left(\fint_{2B} |\nabla^m v_\varepsilon|^2\right)^{1/2}\\
&\leq  C \left\{ \left(\fint_{4B} |F|^2\right)^{1/2} +  \sum_{|\alpha|\le m}\left(\fint_{4B} |f^\alpha|^2\right)^{1/2}\right\}.
\end{align*}
This completes the proof.
\end{proof}

The interior $W^{m,p}$ estimate in Theorem \ref{twmp} gives the following interior H\"{o}lder  and $L^\infty$ estimates by Sobolev imbedding.

\begin{corollary}\label{cor5.1}
Assume that $A$ satisfies   (\ref{cod1})--(\ref{cod3}) and (\ref{vmo}).
Let $u_\varepsilon\in H^m(2B; \mathbb{R}^n)$ be a weak solution to
$$
 \mathcal{L}_\varepsilon u_\varepsilon=\sum_{|\alpha|\le m} D^\alpha f^\alpha \qquad  \text{ in } 2B ,
 $$
where $B=B(x_0, r)$ and $f^\alpha\in L^p(2B; \mathbb{R}^n)$ for some $p>d $.
 Then
\begin{align}
|\nabla^{m-1} u_\varepsilon(x)-\nabla^{m-1} u_\varepsilon(y)|
& \leq \frac{C}{r^{m-1}}
 \left(\frac{|x-y|}{r}\right)^{\theta}
  \left\{  \left(\fint_{2B}|  u_\varepsilon|^2\right)^{1/2} + \sum_{|\alpha|\le m}r^{2m-|\alpha|}
  \left(\fint_{2B}|f^\alpha|^p\right)^{1/p}   \right\},\label{co51re1}\\
\|\nabla^k u_\varepsilon\|_{L^\infty(B)} & \leq C   r^{-k}  \left\{  \left(\fint_{2B}|u_\varepsilon|^2\right)^{1/2} +
\sum_{|\alpha|\le m}r^{2m-|\alpha|}
  \left(\fint_{2B}|f^\alpha|^p\right)^{1/p}    \right\} , \label{co51re2}
\end{align}
where $0\le k\le m-1$, $\theta=1-\frac{d}{p}$,
 and $C$ depends only on $d$, $m$, $n$, $p$, $\mu$ and $\varrho(t)$ in (\ref{vmo}).
\end{corollary}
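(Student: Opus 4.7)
The plan is to derive both \eqref{co51re1} and \eqref{co51re2} as direct consequences of the interior $W^{m,p}$ estimate from Theorem \ref{twmp}, combined with Morrey's embedding and the Caccioppoli inequality (Lemma \ref{l2.6}); no new ingredients are needed.

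For the H\"older estimate \eqref{co51re1}, the first step is to apply Theorem \ref{twmp} on the ball $B=B(x_0,r)$ to obtain the scale-invariant bound
\[
\Big(\fint_{B}|\nabla^m u_\varepsilon|^p\Big)^{1/p}\le C\Big\{\tfrac{1}{r^m}\Big(\fint_{2B}|u_\varepsilon|^2\Big)^{1/2}+\sum_{|\alpha|\le m}r^{m-|\alpha|}\Big(\fint_{2B}|f^\alpha|^p\Big)^{1/p}\Big\}.
\]
Since $p>d$, Morrey's inequality $W^{1,p}(B)\hookrightarrow C^{0,\theta}(B)$ applied componentwise to $\nabla^{m-1}u_\varepsilon$ gives, with constants independent of scale,
\[
|\nabla^{m-1}u_\varepsilon(x)-\nabla^{m-1}u_\varepsilon(y)|\le C|x-y|^{\theta}\|\nabla^{m}u_\varepsilon\|_{L^p(B)}=C|x-y|^\theta r^{d/p}\Big(\fint_B|\nabla^m u_\varepsilon|^p\Big)^{1/p}
\]
for $x,y\in B$. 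Multiplying and using the identity $d/p-m=-(m-1)-\theta$ packages the $r$-factors into $r^{-(m-1)}(|x-y|/r)^\theta$, producing \eqref{co51re1}.

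For the $L^\infty$ bound \eqref{co51re2} I would argue by downward induction on $k$. For $k=m-1$, the elementary inequality $\|v\|_{L^\infty(B)}\le \fint_{B}|v|+Cr^\theta[v]_{C^\theta(B)}$, applied to each component of $\nabla^{m-1}u_\varepsilon$, combines the H\"older seminorm bound just established (note $r^\theta\cdot r^{d/p-m}=r^{-(m-1)}$) with the mean estimate
\[
\fint_B|\nabla^{m-1}u_\varepsilon|\le \Big(\fint_B|\nabla^{m-1}u_\varepsilon|^2\Big)^{1/2}\le \tfrac{C}{r^{m-1}}\Big(\fint_{2B}|u_\varepsilon|^2\Big)^{1/2}+C\sum_{|\alpha|\le m}r^{m+1-|\alpha|}\Big(\fint_{2B}|f^\alpha|^p\Big)^{1/p},
\]
obtained from Caccioppoli's inequality (Lemma \ref{l2.6}, applied to $\mathcal{L}_\varepsilon$ with constants uniform in $\varepsilon$ since $A(\cdot/\varepsilon)$ inherits the ellipticity and $L^\infty$ bounds of $A$), followed by H\"older's inequality on $f^\alpha$. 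For general $0\le k<m-1$, the convexity of $B$ and the fundamental theorem of calculus give $|\nabla^k u_\varepsilon(x)-\nabla^k u_\varepsilon(y)|\le 2r\|\nabla^{k+1}u_\varepsilon\|_{L^\infty(B)}$ for $x,y\in B$, hence
\[
\|\nabla^k u_\varepsilon\|_{L^\infty(B)}\le \fint_{B}|\nabla^k u_\varepsilon|+2r\|\nabla^{k+1}u_\varepsilon\|_{L^\infty(B)};
\]
the mean term is again controlled by Caccioppoli (with the correct scaling $r^{-k}$), and the second term by the inductive hypothesis. Iterating from $k=m-2$ down to $k=0$ yields \eqref{co51re2}.

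The only point meriting attention is the uniformity in $\varepsilon$ of Caccioppoli's inequality for $\mathcal{L}_\varepsilon$, which is immediate since $A(x/\varepsilon)$ satisfies \eqref{cod1}--\eqref{cod2} with the same constant $\mu$ as $A$; everything else is standard bookkeeping under scaling. No compactness or homogenization-specific machinery is needed beyond what is already proved in Theorem \ref{twmp}.
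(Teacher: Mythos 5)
Your proposal is correct and follows essentially the same route as the paper: the H\"older estimate \eqref{co51re1} is obtained from the $W^{m,p}$ bound of Theorem \ref{twmp} together with Morrey/Sobolev embedding on $B$ (the scaling bookkeeping $d/p-m=-(m-1)-\theta$ checks out), and \eqref{co51re2} is then the ``simple consequence'' the paper alludes to, which you flesh out correctly via the averaging inequality, Caccioppoli (Lemma \ref{l2.6}, uniform in $\varepsilon$ by scaling), and downward induction on $k$.
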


\begin{proof}
By Sobolev imbedding it follows that for $p>d$,
\begin{align*}
|\nabla^{m-1} u_\varepsilon (x)-\nabla^{m-1} u_\varepsilon (y)|
\leq C \left(\frac{|x-y|}{r}\right)^{\theta} r^{1-m}
\left\{ r^m \left(\fint_{B} |\nabla^m u_\varepsilon |^p\right)^{1/p}
+\left(\fint_{B} | u_\varepsilon |^2\right)^{1/2} \right\}
\end{align*}
 for any $ x, y \in B$.
This, together with (\ref{wmp}), gives (\ref{co51re1}).
Note that (\ref{co51re2}) is a simple consequence of (\ref{co51re1}).
\end{proof}


\section{ Asymptotic expansions of  fundamental solutions}

Let $u_\varepsilon \in H^m(B(x_0, R))$ be a weak solution of
$\mathcal{L}_\varepsilon (u_\varepsilon )=0$ in $B(x_0, R)$.
Assume that $A(y)$ satisfies  (\ref{cod1})--(\ref{cod3}) and (\ref{vmo}).
It follows from Theorem \ref{twmp} that for any $2<p<\infty$,
\begin{equation}\label{DC-1}
\left(\fint_{B(x_0, R/2)} |\nabla^m u_\varepsilon |^p\right)^{1/p}
\le C_p \left(\fint_{B(x_0, R)} |\nabla^m u_\varepsilon |^2 \right)^{1/2},
\end{equation}
where $C_p$ depends only on $d, n, m, \mu, p$ and $\varrho(t)$ in (\ref{vmo}).
By H\"older's inequality, this gives
\begin{equation}\label{DC-2}
\left(\fint_{B(x_0, r)} |\nabla^m u_\varepsilon |^2\right)^{1/2}
\le C_\sigma \left(\frac{r}{R} \right)^{-\sigma}
 \left(\fint_{B(x_0, R)} |\nabla^m u_\varepsilon |^2 \right)^{1/2}
\end{equation}
for any $0<r<R$ and for any $\sigma \in (0,1)$.
Since $A^*$ satisfies the same conditions as $A$,
estimate (\ref{DC-2}) also holds for solutions of $\mathcal{L}_\varepsilon^* (u_\varepsilon)=0$ in $B(x_0, R)$.
As a consequence, the matrix of fundamental solutions $\Gamma^{\varepsilon, A} (x,y)$ for
$\mathcal{L}_\varepsilon$ in $\mathbb{R}^d$, with pole at $y$,
exists and satisfies the estimates
 \begin{align}
&| \Gamma^{\varepsilon,A}(x,y)|\leq C|x-y|^{2m-d}, \label{l62re1}\\
&  |\nabla_x^k\Gamma^{\varepsilon,A}(x,y)| + |\nabla_y^k \Gamma^{\varepsilon,A}(x,y)| \leq C |x-y|^{2m-k-d},\label{l62re2}
\end{align}
for any $x,y\in \mathbb{R}^d, x\neq y$ and for any   $1 \leq k\leq m- 1$,
where  $C$ depends only on $d$, $n$, $m$, $\mu$,  and $\varrho (t)$
(see \cite{Auscher, ab}).
If $A(y)$ satisfies (\ref{cod1})--(\ref{cod3}) and (\ref{hol}), then for any $x,y\in \mathbb{R}^d, x\neq y,$
\begin{align}
&  |\nabla_x^m\Gamma^{\varepsilon,A}(x,y)| + |\nabla_y^m \Gamma^{\varepsilon,A}(x,y)| \leq C |x-y|^{m-d}, \label{l62re3} \\
& |\nabla_x^k \nabla_y^\ell\Gamma^{\varepsilon,A}(x,y)|\leq C |x-y|^{2m-\ell-k-d}\quad
 \text{ for any } 1 \leq \ell, k\leq m,\label{l62re4}
\end{align}
where $C$ depends only on $d, n,m, \mu, \Lambda_0$ and $\tau_0$.
This follows  readily from (\ref{l62re1}) by Theorem \ref{tlip}, as in the case of second-order elliptic systems \cite{al91,klsc}.
In (\ref{l62re1})-(\ref{l62re4}) and hereafter we assume that $2\le 2m<d$.

 In the rest of this section
 we investigate the asymptotic behavior  of $\Gamma^{\varepsilon, A}(x,y)$ and give the proof of Theorem \ref{texpan}.

\begin{lemma}\label{lemm6.3}
 Assume that $A(y)$ satisfies  (\ref{cod1})--(\ref{cod3}) and (\ref{vmo}).
 Let $u_\varepsilon \in H^m(2B; \mathbb{R}^n)$ and $u_0\in C^{2m}(2B; \mathbb{R}^n) $ such that $ \mathcal{L}_\varepsilon(u_\varepsilon)= \mathcal{L}_0(u_0)$ in $2B$ for $B=B(x_0,1)$. Then for any  $0\leq \ell\leq m-1,$
 \begin{align}\label{l63re1}
 &\|\nabla^\ell u_\varepsilon-\nabla^\ell u_0\|_{L^\infty(B)}\nonumber\\
 &\leq C \left\{\fint_{2B} |u_\varepsilon-u_0|^2 \right\}^{1/2}+ C \sum_{1\leq k\leq m} \varepsilon^{k} \| \nabla^{m+k}u_0\|_{L^\infty(2B)}+ C \sum_{0\leq k\leq l} \varepsilon^{m-k} \| \nabla^{m+\ell-k}u_0\|_{L^\infty(2B)} ,
 \end{align}
 where $C$ depends only on $d,n,m,\mu$ and $\varrho(t)$.
\end{lemma}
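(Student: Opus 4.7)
The plan is to compare $u_\varepsilon$ with the two-scale expansion
\begin{align*}
v_\varepsilon(x) = u_0(x) + \varepsilon^m \sum_{|\gamma|=m} \chi^\gamma(x/\varepsilon)\, D^\gamma u_0(x),
\end{align*}
and to deduce (\ref{l63re1}) via the splitting $\nabla^\ell u_\varepsilon - \nabla^\ell u_0 = \nabla^\ell(u_\varepsilon - v_\varepsilon) + \nabla^\ell(v_\varepsilon - u_0)$. The second piece is handled directly by Leibniz's rule, which expresses $\nabla^\ell(v_\varepsilon-u_0)$ as $\sum_{k=0}^\ell \binom{\ell}{k}\varepsilon^{m-k}(\nabla^k\chi)(x/\varepsilon)\nabla^{m+\ell-k}u_0$ and, upon taking $L^\infty(B)$ norms, reproduces exactly the third sum in (\ref{l63re1}). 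Here I use $\nabla^k\chi\in L^\infty_{per}$ for $0\le k\le m-1$, which follows by applying Theorem \ref{twmp} and Corollary \ref{cor5.1} to the cell problem defining $\chi$ (the VMO assumption on $A$ is essential).

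The core of the argument is to show that
\begin{align*}
\mathcal{L}_\varepsilon(v_\varepsilon)-\mathcal{L}_0(u_0)=\sum_{|\alpha|=m}D^\alpha F^\alpha \quad\text{in}\ 2B, \qquad \|F^\alpha\|_{L^\infty(2B)}\le C\sum_{k=1}^m\varepsilon^k\|\nabla^{m+k}u_0\|_{L^\infty(2B)}.
\end{align*}
Granted this, Corollary \ref{cor5.1} applied on $2B$ to $u_\varepsilon - v_\varepsilon$ yields
\begin{align*}
\|\nabla^\ell(u_\varepsilon-v_\varepsilon)\|_{L^\infty(B)}\le C\Bigl(\fint_{2B}|u_\varepsilon-v_\varepsilon|^2\Bigr)^{1/2}+C\sum_{k=1}^m\varepsilon^k\|\nabla^{m+k}u_0\|_{L^\infty(2B)},
\end{align*}
and the $L^2$ term is split as $\|u_\varepsilon-u_0\|_{L^2(2B)}+\|v_\varepsilon-u_0\|_{L^2(2B)}$, with the latter dominated by $C\varepsilon^m\|\nabla^m u_0\|_{L^\infty(2B)}$; for $\varepsilon\le 1$ this quantity is absorbed by the $k=\ell$ summand of the third sum in (\ref{l63re1}), while for $\varepsilon\ge 1$ the estimate is trivial by Caccioppoli.

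To establish the divergence decomposition, expand $D^\beta v_{\varepsilon j}$ by Leibniz. The leading-order contribution in which all derivatives fall on $\chi(x/\varepsilon)$, combined with $D^\beta u_{0j}$, yields after contraction with $A^{\alpha\beta}_{ij}(x/\varepsilon)$ and the defining identity (\ref{duc}) the expression $\sum_{|\gamma|=m}\bar A^{\alpha\gamma}_{ik} D^\gamma u_{0k}+\sum_{|\gamma|=m} B^{\alpha\gamma}_{ik}(x/\varepsilon)D^\gamma u_{0k}$; the $\bar A$-piece cancels $\mathcal{L}_0(u_0)$ exactly. All other Leibniz terms carry a factor $\varepsilon^{m-|\eta|}$ with $|\eta|\le m-1$ and involve $(\nabla^{|\eta|}\chi)(x/\varepsilon)\in L^\infty$, so they fit directly into the required bound on $F^\alpha$.

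The crux is the $B$-contribution. Using the dual corrector of Lemma \ref{l2.1}, I write $B^{\alpha\gamma}_{ik}(x/\varepsilon)=\varepsilon^m\sum_{|\sigma|=m}D^\sigma_x[\mathfrak{B}^{\sigma\alpha\gamma}_{ik}(x/\varepsilon)]$, distribute the derivatives by reverse Leibniz onto $\mathfrak{B}(x/\varepsilon)\cdot D^\gamma u_{0k}$, and apply the outer $D^\alpha$. The full-divergence piece $\varepsilon^m\sum_{|\alpha|,|\sigma|=m}D^{\alpha+\sigma}[\mathfrak{B}^{\sigma\alpha\gamma}_{ik}(x/\varepsilon)D^\gamma u_{0k}]$ vanishes because of the antisymmetry $\mathfrak{B}^{\sigma\alpha\gamma}=-\mathfrak{B}^{\alpha\sigma\gamma}$ paired with the symmetry of $D^{\alpha+\sigma}$. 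The leftovers are $\varepsilon^{m-|\eta|}D^\alpha\{(\nabla^{|\eta|}\mathfrak{B})(x/\varepsilon)\nabla^{2m-|\eta|}u_0\}$ with $|\eta|\le m-1$, which are pointwise bounded since $\mathfrak{B}\in C^{m-1,\theta}_{per}$ (the auxiliary potential $b$ in Lemma \ref{l2.1} solves a constant-coefficient elliptic equation of order $2m$ with right-hand side in $\bigcap_{p<\infty}L^p$, hence $b\in W^{2m,p}$ and $\mathfrak{B}\in W^{m,p}_{per}\hookrightarrow C^{m-1,\theta}_{per}$). This antisymmetry cancellation, which forces the passage through $\mathfrak{B}$ rather than a direct estimate on $B$, is the main technical obstacle; the rest is careful bookkeeping of the Leibniz expansion and of the regularity of $\chi$ and $\mathfrak{B}$.
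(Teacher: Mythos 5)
Your proposal is correct and follows essentially the same route as the paper: you compare $u_\varepsilon$ with the two-scale expansion $u_0+\varepsilon^m\sum_{|\gamma|=m}\chi^\gamma(x/\varepsilon)D^\gamma u_0$, compute $\mathcal{L}_\varepsilon$ of the difference as a divergence-form right-hand side using the dual correctors $\mathfrak{B}$ and their antisymmetry (the paper's identity (\ref{pl6302})), apply the interior $L^\infty$ estimate of Corollary \ref{cor5.1}, and recover the third sum in (\ref{l63re1}) from the Leibniz expansion of the corrector term. The only (harmless) divergence is that you justify $\nabla^k\mathfrak{B}\in L^\infty$ via $W^{2m,p}$ regularity for the constant-coefficient cell equation rather than by citing Corollary \ref{cor5.1} as the paper does; both are valid.
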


\begin{proof}
By translation we only need to consider the case  $x_0=0 .$ Set
\begin{align}\label{pl6301}
w_\varepsilon=u_\varepsilon-u_0- \varepsilon^m\sum_{|\gamma|=m}\chi^\gamma(\frac{x}{\varepsilon})  D^\gamma u_0.
\end{align}
Using Lemma \ref{l2.1}, we  deduce by direct computations that
\begin{align}\label{pl6302}
\mathcal{L}_\varepsilon(w_\varepsilon)&
=(-1)^{m+1}\sum_{|\alpha|=m}  D^\alpha \left\{\sum_{|\beta|=|\gamma|=m}
 \sum_{ \zeta+\eta=\beta,   |\zeta|\leq m-1}
 C(\zeta,\eta)\varepsilon^{m-|\zeta|} A^{\alpha \beta}(\frac{x}{\varepsilon}) (D^{\zeta}\chi^\gamma)(\frac{x}{\varepsilon})  D^{\eta+\gamma} u_0  \right\}  \nonumber\\
&+(-1)^m\sum_{|\alpha|=m}  D^\alpha \left\{ \sum_{|\beta|=|\gamma|=m} \sum_{  \zeta'+\eta' =\gamma, |\zeta'|\leq m-1}   C(\zeta', \eta' ) \varepsilon^{m-|\zeta'| } (D^{\zeta'}\mathfrak{B}^{\gamma \alpha\beta}) (\frac{x}{\varepsilon})D^{\eta'+\beta} u_0 \right\}.
\end{align}
In view of (\ref{co51re2}) in Corollary \ref{cor5.1},
we know that under the conditions of Lemma
\ref{lemm6.3}, $|\nabla^k \chi^\gamma|$ and $ |\nabla^k\mathfrak{B}^{\gamma\alpha \beta}|$
are bounded for  $0\leq k\leq m-1$.
  We thus derive from (\ref{pl6302}) and (\ref{co51re2}) that for  $0\leq \ell\leq m-1,$
\begin{align}\label{pl6303}
&\|\nabla^\ell w_\varepsilon\|_{L^\infty(B)}\leq C    \left\{  \left(\fint_{2B}|  w_\varepsilon|^2\right)^{1/2} +   \sum_{1\leq k\leq m} \varepsilon^k \| \nabla^{m+k}u_0\|_{L^\infty(2B)} \right\}.
\end{align}
Taking (\ref{pl6301}) into consideration, (\ref{l63re1}) follows easily from (\ref{pl6303}).
\end{proof}

\begin{lemma}\label{lemm6.4}
 Assume that $A(y)$ satisfies  (\ref{cod1})--(\ref{cod3}) and (\ref{hol}).
 Let $u_\varepsilon \in H^m(4B; \mathbb{R}^n)$, $u_0\in C^{2m,\theta} (4B, \mathbb{R}^n)$
 such that $ \mathcal{L}_\varepsilon u_\varepsilon=\mathcal{L}_0 u_0$ in $4B$ for $B=B(x_0,1)$ and $ x_0 \in\mathbb{ R}^d$.
 Then for $0<\varepsilon<1$ and any multi-index $\alpha'$ with $|\alpha'|=m$, we have
\begin{align}\label{l64re1}
\|D^{\alpha'} u_\varepsilon-D^{\alpha'} u_0-&\sum_{|\gamma|=m} (D^{\alpha'} \chi^\gamma)(x/\varepsilon)D^\gamma u_0\|_{L^\infty(B)}\nonumber\\
 &\leq  C   \left\{\fint_{4B}  |u_\varepsilon-u_0|^2  \right\}^{1/2} + C    \varepsilon ( \ln\varepsilon^{-1} +1)  \| u_0\|_{C^{2m,\theta}(4B)} .
\end{align}
\end{lemma}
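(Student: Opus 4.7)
My plan is to parallel the proof of Lemma \ref{lemm6.3}, pushing the differentiation order from $m-1$ up to $m$: the additional H\"older hypothesis on $A$ makes available the full pointwise uniform $C^{m-1,1}$ (Lipschitz) estimate of Theorem \ref{tlip}, which will replace the $C^{m-1,\theta}$ estimate (Corollary \ref{cor5.1}) used for Lemma \ref{lemm6.3}. I set
\[
w_\varepsilon = u_\varepsilon - u_0 - \varepsilon^m\sum_{|\gamma|=m}\chi^\gamma(x/\varepsilon)D^\gamma u_0,
\]
exactly as in Lemma \ref{lemm6.3}. Expanding $D^{\alpha'}$ of the corrector term by the Leibniz rule yields the pointwise identity
\[
D^{\alpha'}u_\varepsilon - D^{\alpha'}u_0 - \sum_{|\gamma|=m}(D^{\alpha'}\chi^\gamma)(x/\varepsilon)D^\gamma u_0 = D^{\alpha'}w_\varepsilon - \sum_{\substack{|\gamma|=m\\ \zeta+\eta=\alpha',|\zeta|\le m-1}} C(\zeta,\eta)\,\varepsilon^{m-|\zeta|}(D^\zeta\chi^\gamma)(x/\varepsilon)D^{\eta+\gamma}u_0.
\]
Under the H\"older hypothesis, $\chi^\gamma\in C^{m,\theta}$ on the torus (apply Theorem \ref{tlip} to the cell problem), so $(D^\zeta\chi^\gamma)(x/\varepsilon)$ is uniformly bounded for $|\zeta|\le m-1$, and this Leibniz cross-term is pointwise $\le C\varepsilon\|u_0\|_{C^{2m-1}(4B)}\le C\varepsilon\|u_0\|_{C^{2m,\theta}(4B)}$.

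It remains to estimate $\|D^{\alpha'}w_\varepsilon\|_{L^\infty(B)}$. From (\ref{pl6302}), $\mathcal{L}_\varepsilon w_\varepsilon=\sum_{|\alpha|=m}D^\alpha F^\alpha$ in $4B$, where each $F^\alpha$ is a linear combination of $\varepsilon^{m-|\zeta|}\Phi(x/\varepsilon)D^{s}u_0$ with $|\zeta|\le m-1$, $|s|=2m-|\zeta|$, and $\Phi$ a bounded periodic factor built either from $A(y)$ and $D^\zeta\chi^\gamma(y)$ (Type I) or from $D^{\zeta'}\mathfrak{B}^{\gamma\alpha\beta}(y)$ (Type II). To apply Theorem \ref{tlip}, the source must be recast as $\sum_{|\alpha|\le m-1}D^\alpha g^\alpha$ with $g^\alpha\in L^q$, $q>d$. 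I would achieve this by integrating one derivative out of each outer $D^\alpha$. For the Type II summands this is direct, since the H\"older hypothesis on $A$ also forces $\mathfrak{B}\in C^{m,\theta}$ on the torus (elliptic regularity applied to the polyharmonic representation in the proof of Lemma \ref{l2.1}), so $D_j$ acting on $(D^{\zeta'}\mathfrak{B})(x/\varepsilon)$ stays bounded. For the Type I summands, whose direct $x$-differentiation is blocked because $A$ is only H\"older, I use the structural identity $B^{\alpha\gamma}=\sum_{|\delta|=m}D^\delta_y\mathfrak{B}^{\delta\alpha\gamma}$ of Lemma \ref{l2.1} to rewrite the relevant $\sum_\beta A^{\alpha\beta}(D^\beta\chi^\gamma)+A^{\alpha\gamma}-\bar A^{\alpha\gamma}$ combination as a divergence of $\mathfrak{B}$; the antisymmetry $\mathfrak{B}^{\delta\alpha\gamma}=-\mathfrak{B}^{\alpha\delta\gamma}$ kills the otherwise uncontrollable top-order contribution $\sum_{\alpha,\delta}D^{\alpha+\delta}[\mathfrak{B}^{\delta\alpha\gamma}(x/\varepsilon)D^\gamma u_0]$ (its coefficient is symmetric against an antisymmetric tensor), and after one integration by parts one arrives at $\mathcal{L}_\varepsilon w_\varepsilon=\sum_{|\alpha|\le m-1}D^\alpha g^\alpha$ with $\|g^\alpha\|_{L^q(2B)}\le C_q\,\varepsilon\|u_0\|_{C^{2m,\theta}(4B)}$ for every $q\in(d,\infty)$.

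Applying Theorem \ref{tlip} to $w_\varepsilon$ on $B(x_0,1)\subset 4B$ then gives
\[
|D^{\alpha'}w_\varepsilon(x_0)|\le C\Bigl\{\Bigl(\fint_{4B}|w_\varepsilon|^2\Bigr)^{1/2}+\sum_{|\alpha|\le m-1}\|g^\alpha\|_{L^q(2B)}\Bigr\},
\]
which, combined with $\|w_\varepsilon\|_{L^2(4B)}\le\|u_\varepsilon-u_0\|_{L^2(4B)}+C\varepsilon^m\|u_0\|_{C^m(4B)}$ and the first-step bound on the Leibniz cross-terms, produces (\ref{l64re1}). The logarithmic factor $\ln\varepsilon^{-1}$ emerges as a borderline effect when one tracks the $|\zeta|=0$ Type~I contribution ($\varepsilon^m A(x/\varepsilon)\chi(x/\varepsilon)D^{2m}u_0$): its reduction via the $\mathfrak{B}$-divergence trick yields a critical singular-integral-type piece, and a dyadic decomposition of $4B$ into annuli between scales $\varepsilon$ and $1$ (on each of which Theorem \ref{tlip} applies with an $\varepsilon$-uniform constant) contributes $O(\log_2\varepsilon^{-1})$ equal pieces, yielding the $\ln\varepsilon^{-1}$.

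\textbf{Main obstacle.} The decisive step is the order-reduction of the right-hand side of the equation for $w_\varepsilon$ from $\sum_{|\alpha|=m}$ to $\sum_{|\alpha|\le m-1}$ \emph{without losing the $\varepsilon$-smallness}: a naive integration by parts either demands differentiating the merely H\"older $A(x/\varepsilon)$, which is inadmissible, or differentiates the periodic corrector factors, which introduces an $\varepsilon^{-1}$ that cancels the $\varepsilon$-smallness of $F^\alpha$. The dual corrector $\mathfrak{B}$ of Lemma \ref{l2.1}, together with its antisymmetric structure, is exactly what is needed to pass one derivative across the divergence structure while keeping one factor of $\varepsilon$. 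Getting the sharp factor $\varepsilon(\ln\varepsilon^{-1}+1)$ rather than $\varepsilon^{1-\delta}$ reflects the borderline/critical nature of the $L^q$ constants in Theorem \ref{tlip} and requires the dyadic argument just described.
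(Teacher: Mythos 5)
Your setup (the choice of $w_\varepsilon$, the identity (\ref{pl6302}), the Leibniz cross-terms, and the reduction of $\fint|w_\varepsilon|^2$ to $\fint|u_\varepsilon-u_0|^2$) matches the paper, but the central step of your argument --- recasting $\mathcal{L}_\varepsilon w_\varepsilon=\sum_{|\alpha|=m}D^\alpha F^\alpha$ as $\sum_{|\alpha|\le m-1}D^\alpha g^\alpha$ with $\|g^\alpha\|_{L^q}\le C_q\varepsilon\|u_0\|_{C^{2m,\theta}}$ --- does not go through. The dual-corrector antisymmetry of Lemma \ref{l2.1} has already been spent in deriving (\ref{pl6302}): that is precisely how the $\varepsilon$-free term $B^{\alpha\beta}(x/\varepsilon)D^\beta u_0$ was converted into terms carrying at least one factor of $\varepsilon$. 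What remains are summands of the form $\varepsilon^{m-|\zeta|}\Phi(x/\varepsilon)D^su_0$ with $|\zeta|\le m-1$; for the borderline terms with $|\zeta|=m-1$ (a single power of $\varepsilon$), peeling one derivative off the outer $D^\alpha$ forces you either to differentiate $A(x/\varepsilon)$ (inadmissible, $A$ is only H\"older) or to differentiate the oscillating factor, which costs $\varepsilon^{-1}$ and annihilates the remaining $\varepsilon$. There is no second antisymmetric structure to invoke, so your $g^\alpha$ would only be $O(1)$, not $O(\varepsilon)$. Moreover, your own target exposes the problem: if the reduction did succeed, Theorem \ref{tlip} on a single ball would give a clean $O(\varepsilon)$ bound with no logarithm, and the ``dyadic decomposition into annuli'' you invoke to manufacture the $\ln\varepsilon^{-1}$ is not an actual argument --- Theorem \ref{tlip} estimates $\nabla^m w_\varepsilon$ at the center of a ball from data on that ball, and it is not clear what applying it ``on each annulus'' would mean.

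The paper resolves this differently: it keeps the right-hand side in $m$-th order divergence form, localizes with a cutoff $\phi$, and represents $D_x^{\alpha'}w_\varepsilon$ through the fundamental solution, using the full estimate $|\nabla_x^m\nabla_y^m\Gamma^{\varepsilon,A}(x,y)|\le C|x-y|^{-d}$ of (\ref{l62re4}) (which is where the H\"older hypothesis and Theorem \ref{tlip} enter). The critical kernel $|x-y|^{-d}$ is then handled by the cancellation $\int D_x^{\alpha'}D_y^\alpha[\Gamma^{\varepsilon,A}(x,y)\phi(y)]\,\Upsilon(x)\,dy=0$, replacing $\Upsilon(y)$ by $\Upsilon(y)-\Upsilon(x)$: the H\"older modulus of $\Upsilon$ controls the region $|x-y|<\varepsilon$, while $\int_{\varepsilon<|x-y|<C}|x-y|^{-d}\,dy\sim\ln\varepsilon^{-1}$ produces exactly the logarithmic factor in (\ref{l64re1}). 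The cutoff commutator terms are then absorbed via Caccioppoli and the $L^2$ norm of $w_\varepsilon$. If you want to salvage your plan, you would need to replace the order-reduction step by this (or an equivalent) singular-integral cancellation argument; as written, the proof has a genuine gap at its decisive step.
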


\begin{proof}
 Let $ \phi\in C_c^\infty(3B)$ with $\phi=1$ in $2B$ and $|\nabla^k \phi|\leq C $  for $1\leq  k\leq 2m$,
  and let  $ w_\varepsilon $ be defined as (\ref{pl6301}). We only need to verify that $ \|\nabla^mw_\varepsilon\|_{L^\infty(B)}  $ is bounded by the RHS
of (\ref{l64re1}).
Through direct computations, we have
\begin{align*}
\mathcal{L}_\varepsilon (w_\varepsilon \phi)& =  (\mathcal{L}_\varepsilon w_\varepsilon)  \phi
+(-1)^m \sum_{|\alpha|=|\beta|=m} \sum_{\zeta+\eta=\alpha,|\eta|\geq 1} C(\zeta,\eta)D^\zeta \left[ A^{\alpha\beta} (\frac{x}{\varepsilon})  D^\beta w_\varepsilon \right] D^\eta\phi \nonumber\\
&+(-1)^m\sum_{|\alpha|=|\beta|=m} \sum_{  \zeta+\eta=\beta,|\eta|\geq 1}C(\zeta,\eta) D^\alpha \left[ A^{\alpha\beta} (\frac{x}{\varepsilon})  D^{\zeta}  w_\varepsilon  D^{\eta} \phi   \right], \nonumber
\end{align*}
which, together with (\ref{pl6302}), implies that for any $x\in B,$
\begin{align}\label{pl6401}
w_{\varepsilon i}(x)&=- \sum_{|\alpha| =m} \int_{3B} D_y^\alpha
 \left[ \Gamma_{ij}^{\varepsilon,A}(x,y) \phi(y)\right]  \Upsilon_j^\alpha(y)  dy
 - \sum_{|\alpha| =m} \int_{3B}D_y^\alpha \left[ \Gamma_{ij}^{\varepsilon,A}(x,y) \phi(y)\right]   \widetilde{\Upsilon}_j^\alpha(y)dy  \nonumber\\
&+  \sum_{|\alpha|=|\beta|=m} \sum_{\zeta+\eta=\alpha,|\eta|\geq 1} (-1)^{m+|\zeta|} C(\zeta,\eta)\int_{3B}D_y^\zeta \Gamma_{ij}^{\varepsilon,A}(x,y)   A_{jk}^{\alpha\beta} (\frac{y}{\varepsilon})  D_y^\beta w_{\varepsilon k} (y)   D_y^\eta\phi (y) dy\nonumber\\
&+  \sum_{|\alpha|=|\beta|=m}  \sum_{  \zeta+\eta=\beta,|\eta|\geq 1}C(\zeta,\eta) \int_{3B} D_y^\alpha \Gamma_{ij}^{\varepsilon,A}(x,y)   A_{jk}^{\alpha\beta} (\frac{y}{\varepsilon})  D_y^{\zeta}  w_{\varepsilon k}(y)  D_y^{\eta} \phi (y)dy \nonumber\\
&\doteq  \mathcal{I}_1(x)+ \mathcal{I}_2(x)+ \mathcal{I}_3(x)+ \mathcal{I}_4(x),
\end{align}where $ \Upsilon (y)= (\Upsilon_j^\alpha(y)), \widetilde{\Upsilon}(y) = (\widetilde{\Upsilon}_j^\alpha(y))$ with
\begin{align*}
&\Upsilon_j^\alpha(y)=\sum_{ |\beta|=|\gamma|=m}\sum_{\zeta+\eta=\beta,  |\eta|\geq 1} C(\zeta,\eta)\varepsilon^{m-|\zeta|} A_{jk}^{\alpha \beta}(\frac{y}{\varepsilon}) (D_y^{\zeta}\chi_{kl}^\gamma)(\frac{y}{\varepsilon})  D_y^{\eta+\gamma} u_{0l}(y), \\
& \widetilde{\Upsilon}_j^\alpha(y) = \sum_{ |\beta|=|\gamma|=m}\sum_{\zeta+\eta=\gamma,  |\eta|\geq 1} C(\zeta,\eta)\varepsilon^{m-|\zeta|}  (D_y^{\zeta}\mathfrak{B}_{jk}^{\gamma\alpha \beta}) (\frac{y}{\varepsilon})D_y^{\eta+\beta} u_{0k}(y).
\end{align*}
Note that
\begin{align*}
   \int_{3B} D_y^\alpha \left[ \Gamma_{ij}^{\varepsilon,A}(x,y) \phi(y)\right]    \Upsilon_j^\alpha(x)  dy\equiv0.
\end{align*}
Hence for  any multi-index $\alpha'$ with $|\alpha'|=m$,  we have
\begin{align}\label{pl6402}
D_x^{\alpha'}\mathcal{I}_1(x)&=  \sum_{|\alpha| =m} \int_{3B}D_x^{\alpha'} D_y^\alpha \left[ \Gamma_{ij}^{\varepsilon,A}(x,y) \phi(y)\right]  \left[\Upsilon_j^\alpha(y) - \Upsilon_j^\alpha(x)\right] dy\nonumber\\
& = \sum_{|\alpha|=m } \int_{3B}D_x^{\alpha'} D_y^\alpha   \Gamma_{ij}^{\varepsilon,A}(x,y) \phi(y)   \left[\Upsilon_j^\alpha(y) - \Upsilon_j^\alpha(x)\right] dy\nonumber\\
&\ \ + \sum_{|\alpha|=m } \sum_{\zeta+\eta=\alpha, |\eta|\geq 1 } C (\zeta, \eta)\int_{3B}D_x^{\alpha'} D_y^\zeta   \Gamma_{ij}^{\varepsilon,A}(x,y) D_y^\eta\phi(y)   \left[\Upsilon_j^\alpha(y) - \Upsilon_j^\alpha(x)\right] dy \nonumber\\
&\doteq \mathcal{I}_{11}+\mathcal{I}_{12}.
\end{align}
In view of (\ref{l62re4}), we have
\begin{align}\label{pl6403}
|\mathcal{I}_{11}|&\leq C \int_{B(x,\varepsilon)} \frac{|\Upsilon (x)- \Upsilon (y)|}{|x-y|^d} dy+ C\int_{3B\setminus B(x,\varepsilon)} \frac{|\Upsilon (x)- \Upsilon (y)|}{|x-y|^d} dy \nonumber\\
&\leq  C \int_{B(x,\varepsilon)} \frac{|\Upsilon (x)- \Upsilon (y)|}{|x-y|^d}  dy+ C \ln(\varepsilon^{-1}) \|\Upsilon \|_{L^\infty(3B)}\nonumber\\
&\leq C    \varepsilon ( \ln\varepsilon^{-1} +1)  \| u_0\|_{C^{2m,\theta}(4B)},
\end{align}
where for the last inequality, we have used the fact that
\begin{align*}
\|\Upsilon \|_{C^{0, \theta}(4B)}&\leq C \sum_{0\leq k\leq m-1}\varepsilon^{m-k}  \| \nabla^{2m-k} u_0\|_{C^{0,\theta}(4B)}
 +C \sum_{0\leq k\leq m-1}\varepsilon^{m-k-\theta} \|\nabla^{2m-k} u_0\|_{L^\infty(4B)},
\end{align*}
and
\begin{align}\label{pl6404}
&\|\Upsilon\|_{L^\infty(3B)}\leq C \sum_{0\leq k\leq m-1} \varepsilon^{m-k}     \| \nabla^{2m-k} u_0\|_{L^\infty(4B)} .
\end{align}
By (\ref{l62re4}) and  (\ref{pl6404}), it is easy to derive that
\begin{align*}
|\mathcal{I}_{12}|\leq C \varepsilon \| u_0\|_{C^{2m,\theta}(4B)}.
\end{align*}
This, combined with (\ref{pl6402}) and  (\ref{pl6403}), implies that
\begin{align}\label{pl6405}
|\nabla^m\mathcal{I}_1 (x)|\leq C    \varepsilon ( \ln\varepsilon^{-1} +1)  \| u_0\|_{C^{2m,\theta}(4B)} ~~ \text{ for any } x\in B.
\end{align}
In a similar way, we can show that
\begin{align}\label{pl6406}
|\nabla^m\mathcal{I}_2(x)|\leq  C    \varepsilon ( \ln\varepsilon^{-1} +1)  \| u_0\|_{C^{2m,\theta}(4B)} \ \ \text{ for any } x\in B.
\end{align}

Finally, we turn to the estimates of $\nabla^m\mathcal{I}_3(x)+\nabla^m\mathcal{I}_4(x).$
Using (\ref{l62re3}) and (\ref{l62re4}), we have
\begin{align*}
|\nabla^m\mathcal{I}_3(x)+\nabla^m\mathcal{I}_4(x)|\leq  C\sum_{0\leq k\leq m}  \int_{3B}  |\nabla^k w_\varepsilon|
 \leq C\sum_{0\leq k\leq m} \left\{ \int_{3B}  |\nabla^k w_\varepsilon|^2 \right\}^{1/2}.
\end{align*}
From (\ref{pl6302}) and Caccioppoli's inequality (\ref{l2.6re2}), we then deduce that
\begin{align*}
|\nabla^m\mathcal{I}_3(x)+\nabla^m\mathcal{I}_4(x)|\leq & C   \left\{\fint_{4B}  |u_\varepsilon-u_0|^2  \right\}^{1/2}
+C \varepsilon^{m}\|\nabla^mu_0\|_{L^\infty(4B)}\\
&+ C \sum_{0\leq k\leq m-1} \varepsilon^{m-k}\|\nabla^{2m-k}u_0\|_{L^\infty(4B)},
\end{align*}
which, combined with (\ref{pl6401}), (\ref{pl6405}) and (\ref{pl6406}), implies (\ref{l64re1}).
\end{proof}

Now we are ready to prove Theorem \ref{texpan}.

\begin{proof}[\bf Proof of Theorem \ref{texpan}]
Fix $x_0,y_0 \in \mathbb{R}^d$ and set $R=\frac{1}{8}|x_0-y_0|.$
We only need to consider the case for $ 0<\varepsilon < R$.
For otherwise, the desired estimates follow directly from
(\ref{l62re1})-(\ref{l62re4}).
Moreover, by rescaling we observe that
\begin{align}\label{ptexpan01}
 \Gamma^{\varepsilon, A}(x,y) =r^{2m-d} \Gamma^{\frac{\varepsilon}{r},A}(r^{-1}x,r^{-1}y).
\end{align}
Therefore, we  may assume that $R=1$.

Now for $F\in  C^\infty_c(\mathbb{R}^d; \mathbb{R}^n)$ with support in $B(y_0, 1),$ set
$$
 u_\varepsilon (x)=\int_{\mathbb{R}^d}  \Gamma^{\varepsilon,A}(x,y) F(y) dy, ~~ u_0 (x)=\int_{\mathbb{R}^d}   \Gamma^{0,A}(x,y) F(y) dy,$$
and define $w_\varepsilon$ as (\ref{pl6301}).
Since $ \mathcal{L}_0 u_0=0$ in $B(x_0,4)$, we see that for any  $ k\geq0$,
\begin{align} \label{ptexpan02}
\|\nabla^{m+k}u_0\|_{L^\infty(B(x_0,2))} \leq C \|\nabla^mu_0\|_{L^2(B(x_0,4))}\leq  C \|\nabla^mu_0\|_{L^2(\mathbb{R}^d)}.
\end{align}
On the other hand, since $ \mathcal{L}_\varepsilon u_\varepsilon=\mathcal{L}_0 u_0=0$ in $B(x_0,4)$, we deduce from Lemma \ref{lemm6.3} that,
\begin{align}\label{ptexpan03}
&\| u_\varepsilon-  u_0\|_{L^\infty(B(x_0,1))}\nonumber\\
&\leq  C \left\{\int_{B(x_0,2)} |u_\varepsilon-u_0|^2 \right\}^{1/2}+ C \sum_{1\leq k\leq m} \varepsilon^k \| \nabla^{m+k}u_0\|_{L^\infty(B(x_0,2))}+ C\varepsilon^m \| \nabla^mu_0\|_{L^\infty(B(x_0,2))}\nonumber\\
&\leq C \left\{\int_{B(x_0,2)} |w_\varepsilon|^2 \right\}^{1/2}+
 C \varepsilon^m\| \nabla^m u_0\|_{L^2(B(x_0,2))}+
 C \sum_{1\leq k\leq m} \varepsilon^k  \|\nabla^{m} u_0\|_{L^2(\mathbb{R}^d)} \nonumber \\
&\leq C  \|\nabla^mw_\varepsilon\|_{L^2(\mathbb{R}^d)} +C \varepsilon  \|\nabla^mu_0\|_{L^2(\mathbb{R}^d)},
\end{align}
where we have used H\"{o}lder's inequality and Sobolev imbedding for the third inequality.
Thanks to (\ref{pl6302}), we have
$$
 \|\nabla^mw_\varepsilon\|_{L^2(\mathbb{R}^d)} \leq C \sum_{1\leq k\leq m} \varepsilon^k  \|\nabla^{m+k}u_0\|_{L^2(\mathbb{R}^d)},
 $$
  which, together with
 (\ref{ptexpan03}) implies that
 \begin{align}\label{ptexpan04}
 \| u_\varepsilon-  u_0\|_{L^\infty(B(x_0,1))}
 \leq C \sum_{1\leq k\leq m} \varepsilon^k  \|\nabla^{m+k}u_0\|_{L^2(\mathbb{R}^d)}+
 C  \varepsilon  \|\nabla^mu_0\|_{L^2(\mathbb{R}^d)}.
\end{align}
 It follows by the classical Calder\'{o}n-Zygmund estimates and the fractional integral estimates (see e.g. \cite{st} Chapters II, V)
 that
\begin{align*}
&\|\nabla^{2m} u_0\|_{L^p(\mathbb{R}^d)}\leq C_p\| F\|_{L^p(\mathbb{R}^d)} \quad \text{ for } 1<p<\infty, \\
&\|\nabla^su_0\|_{L^q(\mathbb{R}^d)}\leq C_p\|F\|_{L^p(\mathbb{R}^d)} \quad
\text{ for } 1<p<\frac{d}{2m-s} \text{ and }
\frac{1}{q}=\frac{1}{p}-\frac{2m-s}{d}.
\end{align*}
Therefore, we deduce from  (\ref{ptexpan04}) that
\begin{align*}
 |  u_\varepsilon(x_0)- u_0(x_0)|\leq  \| u_\varepsilon- u_0\|_{L^\infty(B(x_0,1))} \leq C\varepsilon \|F\|_{L^2(B(y_0, 1))}.
\end{align*}
Standard duality arguments then lead to
\begin{align*}
 \| \Gamma^{\varepsilon,A}(x_0,y)- \Gamma^{0,A}(x_0,y)\|_{L^2(B(y_0,1))}\leq C\varepsilon.
\end{align*}
Since
$$
\mathcal{L}^*_\varepsilon  \Gamma^{\varepsilon,A}(x_0,y) = \mathcal{L}^*_0   \Gamma^{0,A}(x_0,y) =0 ~\text{ in } B(y_0,4),
$$
in view of Lemma \ref{lemm6.3}, we obtain that, for  $0\leq |\zeta|\leq m-1$,
\begin{align*}
&\|D_y^\zeta\Gamma^{\varepsilon,A}(x_0,y)-D_y^\zeta\Gamma^{0,A}(x_0,y)\|_{L^\infty(B(y_0,\frac{1}{2}))}\nonumber\\ &\leq C   \|\Gamma^{\varepsilon,A}(x_0,y)-\Gamma^{0,A}(x_0,y)\|_{L^2(B(y_0,1))}+ C \sum_{1\leq k\leq m} \varepsilon^{k} \| \nabla_y^{m+k} \Gamma^{0,A}(x_0,y)\|_{L^\infty(B(y_0,1))} \nonumber\\
&\ \  +  C \sum_{1\leq k\leq |\zeta|} \varepsilon^{m-k} \| \nabla_y^{m+|\zeta|-k}\Gamma^{0,A}(x_0,y)\|_{L^\infty(B(y_0,1))}  \nonumber\\
 & \leq C \varepsilon,
\end{align*}
which implies (\ref{expan1}) through simple rescaling (see (\ref{ptexpan01})).

Let us now prove (\ref{expan2}).
Note that $$ \mathcal{L}_\varepsilon  \Gamma^{\varepsilon,A}(x,y_0)= \mathcal{L}_0  \Gamma^{0,A}(x,y_0)=0  \text{ in }B(x_0,4),$$
and $\Gamma^{0,A}(x,y_0) $ is smooth with $$ \|\Gamma^{0,A}(x,y_0) \|_{C^{2m,\theta}(B(x_0,4))} \leq C .$$
We thus deduce from Lemma \ref{lemm6.4} and  (\ref{expan1}) that, for any multi-index $\xi$ with $|\xi|=m,$
\begin{align*}
&\|D^\xi_x\Gamma^{\varepsilon,A}(x,y_0)-D^\xi_x\Gamma^{0,A}(x,y_0)- \sum_{|\gamma|=m}(D_x^\xi\chi^\gamma) (x/\varepsilon)D_x^\gamma \Gamma^{0,A}(x,y_0) \|_{L^\infty(B(x_0,1))}
 \leq  C    \varepsilon ( \ln\varepsilon^{-1} +1),
\end{align*}
which implies especially that
\begin{align}\label{ptexpan05}
 |D^\xi_x\Gamma^{\varepsilon,A}(x_0,y_0)-D^\xi_x\Gamma^{0,A}(x_0,y_0)-\sum_{|\gamma|=m} (D_x^\xi\chi^\gamma) (x_0/\varepsilon)D_x^\gamma \Gamma^{0,A}(x_0,y_0) |
 \leq  C    \varepsilon ( \ln\varepsilon^{-1} +1),
\end{align} where $C$ depends only on $d, n, m, \mu, \Lambda_0,$ and $\tau_0$.
For the general case (\ref{expan2}),  we can deduce form (\ref{ptexpan01}) and (\ref{ptexpan05}) immediately by rescaling.

Finally, let us prove (\ref{expan3}) and (\ref{expan4}).  Using
\begin{equation}\label{adjoint-relation}
\Gamma^{\varepsilon, A}_{jk}(x, y) =\Gamma_{kj}^{\varepsilon, A^*} (y, x),
\end{equation}
 we may  deduce from (\ref{expan2}) that for any multi-index $\xi$ with $|\xi|=m$,
\begin{align}\label{ptexpan06}
 |D^\xi_y\Gamma^{\varepsilon,A}(x_0,y_0)-D^\xi_y\Gamma^{0,A}(x_0,y_0)- \sum_{|\gamma|=m}(D_y^\xi\chi^{*\gamma}) (y_0/\varepsilon) D_y^\gamma \Gamma^{0,A}(x_0,y_0)  | \leq C \varepsilon  \ln (\varepsilon^{-1} +1).
   \end{align}
  Writing in a  more precise way, (\ref{expan2}) and (\ref{ptexpan06}) (after rescaling) read as
 \begin{align}\label{ptexpan60}
 \Big|D^\xi_x\Gamma_{ij}^{\varepsilon,A}(x,y)
 - \sum_{|\gamma|=m} D_x^\xi\left\{ \frac{1}{\gamma!}\delta_{ik} x^\gamma
 -  \varepsilon^m\chi_{ik}^\gamma  (x/\varepsilon) \right\} D^\gamma_x\Gamma_{kj}^{0,A}(x,y)\Big|\leq C \varepsilon \frac{\ln (\varepsilon^{-1}|x-y|+1)} { |x-y|^{d+1-m} } ,\nonumber\\
  \Big| D^\xi_y\Gamma_{ij}^{\varepsilon,A}(x,y)-
  \sum_{|\gamma|=m}D^\gamma_y\Gamma_{ik}^{0,A}(x,y)D_y^\xi \left\{\frac{1}{\gamma!}\delta_{kj}y^\gamma -  \varepsilon^m\chi_{jk}^{*\gamma} (y/\varepsilon)  \right\} \Big|\leq C \varepsilon \frac{\ln (\varepsilon^{-1}|x-y|+1)} { |x-y|^{d+1-m} },
   \end{align}
   for $x, y\in \mathbb{R}^d, x\neq y.$
For $|\xi|=|\gamma|=m$, $x\in B(x_0,4),$  we set
\begin{align*}
u_{\varepsilon i}(x)=D_y^\xi \Gamma_{ij}^{\varepsilon,A}(x,y_0) \quad
\text{ and } \quad
u_0(x)= \sum_{|\gamma|=m}D^\gamma_y\Gamma_{ik}^{0,A}(x,y_0)D_y^\xi \left\{\frac{1}{\gamma!}\delta_{kj}y^\gamma
-  \varepsilon^m\chi_{jk}^{*\gamma} (y_0/\varepsilon)  \right\},
\end{align*}
where $\delta_{kj}  $ is the Kronecker function.  It follows from (\ref{ptexpan60}) that
\begin{align*}
\|u_\varepsilon-u_0\|_{L^\infty(B(x_0,1))}\leq C \varepsilon  \ln (\varepsilon^{-1} +1)  .
\end{align*}
Note that $u_0\in  C^{2m,\theta}(2B)$.  Hence Lemma \ref{lemm6.3} implies that for $0\leq |\zeta|\leq m-1$,
\begin{align} \label{ptexpan07}
&\|D_x^\zeta D_y^\xi \Gamma_{ij}^{\varepsilon,A}(x,y_0) -\sum_{|\gamma|=m}D_x^\zeta D^\gamma_y\Gamma_{ik}^{0,A}(x,y_0)D_y^\xi \left\{\frac{1}{\gamma!}\delta_{kj}y^\gamma - \varepsilon^m\chi_{jk}^{*\gamma} (\frac{y_0}{\varepsilon})  \right\} \|_{L^\infty(B(x_0,1))}\nonumber\\
&\leq C \varepsilon  \ln (\varepsilon^{-1} +1).
\end{align}
Furthermore,  by Lemma \ref{lemm6.4} we obtain that for $|\xi|=|\eta|=m,$
\begin{align}\label{ptexpan08}
&\|D_x^\eta D_y^\xi \Gamma_{ij}^{\varepsilon,A}(x,y_0) -  \Theta^{\eta,\xi}_{ij}(x,y_0)
 \|_{L^\infty(B(x_0,1))} \leq C \varepsilon  \ln (\varepsilon^{-1} +1),
\end{align}
where 
$$
\Theta^{\eta, \xi}_{ij}(x,y_0)
=\sum_{|\sigma|=m} \sum_{|\gamma|=m} D_x^\eta\left\{ \frac{\delta_{ik}}{\sigma!} x^{\sigma}-  \varepsilon^m\chi_{ik}^{\sigma}  (x/\varepsilon ) \right\}D_x^{\sigma} D^\gamma_y\Gamma_{kl}^{0,A}(x,y_0)D_y^\xi \left\{\frac{\delta_{jl}}{\gamma!}y^\gamma -  \varepsilon^m\chi_{jl}^{*\gamma} (y_0/\varepsilon)  \right\}.$$
Thanks to (\ref{ptexpan01}), we obtain (\ref{expan3}) and (\ref{expan4}) from (\ref{ptexpan07}) and (\ref{ptexpan08}) respectively by rescaling.
\end{proof}

\bibliographystyle{amsplain}

\bibliography{23.bbl}

\noindent\textbf{Acknowledgments.} This paper was completed during the visits of the first and  third authors at the University of Kentucky.
They would like to extend sincere gratitude to Professor Zhongwei Shen for his guidance and warm hospitality. Special thanks also go to the Department of Mathematics for the warm hospitality and support.

\vspace{1cm}
\noindent Weisheng Niu \\
School of Mathematical Science, Anhui University,
Hefei, 230601, P. R. China\\
E-mail:weisheng.niu@gmail.com\\

\noindent Zhongwei Shen\\
Department of Mathematics, University of Kentucky,
Lexington, Kentucky 40506, USA.\\
E-mail: zshen2@uky.edu\\

\noindent Yao Xu \\
Department of Mathematics, Nanjing University,
Nanjing, 200093, P. R. China\\
E-mail:dg1421012@smail.nju.edu.cn\\

 \end{document}